\journalname{}
 \newtheorem{thm}[theorem]{Theorem}
 \newtheorem{lem}[lemma]{Lemma}
\newtheorem{pro}[proposition]{Proposition}
 \newtheorem{cor}[corollary]{Corollary}
 \newtheorem{defn}[definition]{Definition}
 \newtheorem{rem}[remark]{Remark}
\DeclareMathOperator{\grad}{grad}
\DeclareMathOperator{\tr}{tr}
\DeclareMathOperator{\R}{\mathbb{R}}
\DeclareMathOperator{\N}{\mathbb{N}}
\DeclareMathOperator{\PP}{\mathbb {P}}
\newcommand{\us}{\underline{s}}
\newcommand{\transp}[1]{ {}^t #1}
\newcommand{\usigma}{\underline{\sigma}}
\begin{document}

\title{ Wishart exponential families \\ on cones related to $A_n$ graphs
}
\subtitle{}


\author{P. Graczyk         \and
        H. Ishi \and 
        S. Mamane 
}


\institute{P. Graczyk \at
              University of Angers \\
              \email{piotr.graczyk@univ-angers.fr}           
           \and
           H. Ishi \at
              Nagoya University \\ 
              \email{hideyuki@math.nagoya-u.ac.jp}
              \and 
              S. Mamane \at University of the Witwatersrand \\ \email{Salha.Mamane@wits.ac.za}
}

\date{Received: date / Revised: date}

\maketitle

\begin{abstract}
Let $G=A_n$ be the graph corresponding  to the  graphical model
of  nearest neighbour interaction in a Gaussian character. 
 We study Natural Exponential Families(NEF) of 
Wishart distributions on {convex} cones $Q_G$ and  $P_G$,
 where  $P_G$ is the cone
  of positive definite {real symmetric} matrices with obligatory zeros prescribed by $G$,
 {and $Q_G$ is the dual cone of $P_G$.}
   The Wishart NEF {that} we construct include
 Wishart distributions considered earlier by {\cite{lauritzen1996} and \cite{L-M}} for models
 based on decomposable graphs. 
Our approach is however  different and allows us to study the basic objects of Wishart NEF on the cones $Q_G$ and  $P_G$.
We determine Riesz measures generating  Wishart exponential families
on $Q_G$ and  $P_G${,} 
 and we give the quadratic construction of these Riesz measures and exponential families.
The mean, inverse-mean, covariance  and variance functions, as well as moments of higher order
   are studied and {their} explicit formulas are given.

\keywords{Wishart distribution \and graphical model  \and nearest neighbour interaction}
\end{abstract}

\section{Introduction}
\label{intro}

The classical Wishart distribution was first derived by \cite{wishart1928} as the distribution of the maximum likelihood estimator of the covariance
 matrix of the multivariate normal distribution. In the framework of graphical Gaussian models, the distribution of the maximum
likelihood estimator of $\pi(\Sigma)$,
{where $\pi$ denotes the canonical projection onto $Q_G$},
 was derived by \cite{dawid1993}, who called it the hyper Wishart distribution.
{\cite{dawid1993} also considered the hyper inverse Wishart distribution which is defined on $Q_G$ as the Diaconis-Ylvisaker conjugate prior distribution for $\pi(\Sigma)$,
and \cite{roverato2000} derived the so-called $G$-Wishart distribution on $P_G$, that is,
 the distribution of the  concentration matrix $K=\Sigma^{-1}$
 when $\pi(\Sigma)$ follows the hyper inverse Wishart distribution}.   
\cite{L-M} constructed two classes of multi-parameter Wishart distributions on the cones {$Q_G$ and $P_G$} associated to a decomposable graph
$G$  and called them type I and type II Wishart distributions, {respectively}. They are more flexible because they have multiple shape parameters.
{In fact, the type I and type II Wishart distributions 
generalize} the hyper Wishart distribution and {the G-Wishart} distribution respectively.

The Wishart exponential  families introduced and studied in this paper include
the  type I and type II Wishart distributions of Letac-Massam
on the cones {$Q_G$ and $P_G$} associated to $A_n$ graphs. Our methods, which are new and different
from methods of articles cited above,  simplify  in a significant way the Wishart theory
for graphical   models.

In \cite{graczykIshi} and in \cite{ishiHammamet} the theory of Wishart distributions on  general convex cones was developped, with a strong accent on the quadratic constructions and on applications to homogeneous cones . In this article we  apply for the first time the ideas and results of
 \cite{graczykIshi} to study important families of non-homogeneous cones.

 Applications in estimation and other practical aspects of Wishart {distributions} are intensely studied,
{cf. \cite{sugiura1988,tsukuma2006,konno2007,konno2009,kuriki2010}.}  
 
The focus of this work is   on non-homogeneous cones  $Q_{A_n}$ and $P_{A_n}$
appearing in the statistical theory of  graphical models,  corresponding to the practical model of nearest neighbour interactions.
In the Gaussian character $(X_1,X_2,\ldots,X_n)$,     non-neighbours $X_i,X_j$, $|i-j|>1$ are conditionally independent with respect to other variables.
This family of decomposable graphical models presents many advantages: it encompasses the univariate case ($A_1$), a complete graph ($A_2$), 
 a non-complete homogeneous graph ($A_3$) and an infinite number of non-homogeneous graphs
 ($A_n$, $n\geq 4$).  

The methods introduced in this article allowed to solve  in
\cite{GIMO}
 the Letac-Massam Conjecture on the cones $Q_{A_n}$. Together with the results of this article
 we achieve in this way the complete study of all classical objects of {an} exponential family
 for the Wishart  NEF on the cones $Q_{A_n}$.
 
Some of the results of our research may be extended to cones related to  all decomposable graphs
(work in progress).  Many  of them are however specific for the cones $Q_{A_n}$ and $P_{A_n}$
(indexation of Riesz and Wishart measures by $M=1,\ldots, n$,  Letac-Massam Conjecture,  Inverse Mean Map, Variance function).\\

{\it Plan of the article.} Sections  \ref{sec:1},  \ref{ReccConstr} and \ref{LaplaceSection}
provide the main tools  in order to define and to study the Wishart NEF on  the cones $Q_{A_n}$ and $P_{A_n}$.
 In Section \ref{sec:1}, useful notions of eliminating orders
 $\prec$ on $A_n$ and 
of generalized power functions $\delta^{\prec}_{\us}$ and $\Delta^{\prec}_{\us}$, $\us \in \R^n$ will be introduced on the cones  $Q_{A_n}$ and $P_{A_n}$ respectively. 
 In Theorem \ref{delta-Delta}, 
 a classical relation between the power functions   $\delta^{\prec}_{\us}$  and $\Delta^{\prec}_{-\us}$ is proved
as well as the dependence of $\delta^{\prec}_{\us}$ and $\Delta^{\prec}_{\us}$ on the maximal element $M$
of $\prec$ only. Thus, in the sequel of the paper,  only generalized power functions $\delta^{(M)}_{\us}$ and $\Delta^{(M)}_{\us}$  appear.
Next important tool of analysis of Wishart exponential families {are} 
recurrent construction of the cones $P_G$ and $Q_G$ and corresponding changes of variables.
{They} are introduced and studied in Section \ref{ReccConstr}, {and} are immediately applied in Section
\ref{LaplaceSection} in order to compute
the Laplace transform of generalized power functions $\delta^{(M)}_{\us}$ and $\Delta^{(M)}_{\us}
$
(Theorems \ref{laplacedelta} and \ref{laplaceDelta}).

In  Section  \ref{RieszWishartQ}, Wishart  natural exponential families on the cones $Q_{A_n}$ 
are defined, and all their classical objects {are explicitly} determined, beginning  with the
Riesz generating measures, Wishart densities, Laplace transform, mean and covariance.
In Theorem \ref{inverseMean} and Corollary \ref{invmeanexplicit}, an explicit formula for  the  inverse mean map 
is proved. It 
provides  an infinite number  of versions of  Lauritzen formulas for bijections between the cones
$Q_G$ and $P_G$. In Section \ref{VarianceSection} two explicit  formulas are given for the variance function of a Wishart family. The formula of Theorem \ref{ThNice} is surprisingly simple and similar
to the  case of the symmetric cone $S_n^+$.
Sections \ref{quadr_Q} and \ref{higher} are devoted to the quadratic constructions of Wishart exponential families  on $Q_G$ and to the computation of their  higher moments in Theorem
\ref{highMom}.  An interesting connection to the Missing Data statistics is {mentioned} and will be developped in a forthcoming paper.

Section \ref{WishartP} is on  Wishart  natural exponential families on the cones $P_{A_n}$
and follows a similar scheme as Section  \ref{RieszWishartQ}, however the inverse mean map and variance function are not available  on the cones  $P_{A_n}$. The analysis on these  cones  
is more difficult.

In  the last Section \ref{LM} we  establish the relations of the Wishart NEF 
defined and studied in our paper  with the  type I and type II
Wishart distributions from \cite{L-M}. Our methods give a simple proof of the formulas for Laplace transforms of type I and type II
Wishart distributions from   \cite{L-M}.

\section{Preliminaries on $A_n$ graphs and related cones}
\label{sec:1}

In this section we study properties of graphs $A_n$ that will be important  in the theory of Riesz measures and Wishart distributions
on the cones related to these graphs.
In particular, we characterize all the
eliminating  orders of vertices and we introduce  generalized power functions related  to such orders.
We show that they only depend on the maximal element $M\in\{1,\ldots,n\}$ of the order.

An undirected graph is a pair  $G=(V , \mathcal{E})$, where $V$ is a finite set and $\mathcal{E}$
is a subset of $\mathcal{P}_2(V)$, the set of all subsets of $\mathcal{E}$ with  cardinality two.
The elements of $V$ are called nodes or vertices and the elements of $\mathcal{E}$ are called edges.
If ${\{v, v'\}}\in \mathcal{E}$, then {$v$ and $v'$} are said to be adjacent and this is denoted by
{$v \sim v'$}. Graphs are visualized by representing each node by a point and each edge {$\{v, v'\}$} by a line with the nodes
{$v$ and $v'$} as endpoints.
{For convenience, we introduce a subset $E \subset V \times V$ defined by $E:= \{(v,v'): v \sim v'\} \cup \{(v,v): v \in V\}$.}


The graph  with  $V=\{v_1,v_2,\ldots,v_n\}$ and  
$\mathcal{E} = {\{ \{v_j, \,\, v_{j+1}\}: 1 \le j \le n-1 \}}$ is denoted {by} $A_n$ 
and represented as  $1-2-3-\hdots-n$.
An $n$-dimensional Gaussian model  $(X_v)_{v\in V}$ is said to be Markov with respect to a graph $G$ if
for any {$(v,\,\,v')\notin E$}, the random variables $X_{v}$ and $X_{v'}$ are conditionally independent given all the
other variables. The conditional independence relations encoded in $A_n$ graph are of the form:
  $X_{v_i} \perp X_{v_j}| (X_{v_k})_{k\neq i,j}$, for all $|i-j|>1$.
Thus, $A_n$ graphs correspond to nearest neighbour interaction models.
{In what follows, we often denote the vertex $v_i$ by $i$.}

Let $S_n$ be the space of {real} symmetric matrices of order $n$ and let $S_n^+\subset S_n$ be  the cone of positive definite matrices. 
The notation for a positive definite matrix $y$ is $y>0$.
{For a graph $G$,
let $Z_G \subset S_n$ be the vector space consisting of $y \in S_n$ } such that $y_{ij}=0$ if $(i\,,\,j)\notin E$.
Let $I_G=Z_G^*$ be the dual vector space with respect to the scalar product
${\langle y, \eta\rangle= \tr(y\eta)=\sum_{(i,j)\in E}y_{ij}\eta_{ij}, \ \ y \in Z_G,\,\eta\in I_G}.$
{In the statistical literature, the vector space $I_G$
	is commonly realised as the space of $n\times n$  symmetric
	matrices $\eta$, in which only the coefficients $\eta_{ij}$,
	$(i,j)\in E$, are given. We adapt this realisation of $I_G$ in this paper.
}

If $I\subset V$, we denote by $y_I$ the submatrix of $y\in Z_G$ obtained by extracting from $y$ the lines and the columns 
indexed by $I$. The same notation is used {for} {$\eta \in I_G$}. 
Let $P_{G}$ be the cone defined by
$P_{G} = \{y\in Z_G: y>0 \}$,
 and $Q_G \subset I_G$ the dual cone of $P_G$, that is,
$$Q_G=\{ \eta\in I_G:\ \forall y\in \overline{P_{G}}\backslash \{0\}\ \ \langle y, \eta \rangle > 0   \}.$$ 
A Gaussian vector model $(X_v)_{v\in V}$ is Markov with respect to $G$ if and only if {the concentration matrix $K = \Sigma^{-1}$ belongs to $P_G$.}

{
When $G = A_n$, the cone {$Q_{G}$} is described as
$Q_{G} = \{ \eta\in I_G: \eta_{\{i,i+1\}}>0, \,\, i=1, \dots, n-1 \}$.
Let $\pi={\pi_{I_G}}$ 
 be the projection of
$S_n$ onto $I_G$, $x \mapsto \eta$ such that $\eta_{ij}=x_{ij}$ if $(i,j) \in E$.
Then it is known (cf. \cite{L-M, andersson2010}) that  
 the mapping  $P_G \longrightarrow Q_G$, $y\longmapsto \pi(y^{-1})$ is a bijection.}

In the sequel, unless otherwise stated, $G=A_n$, 
\subsection{Eliminating Orders}
Different orders of vertices $v_1,v_2,\ldots,v_n$
should be considered in order to have a harmonious theory of Riesz  and Wishart distributions  on the cones related to $A_n$ graphs.
The orders that will be important in this work are called {\it eliminating orders of vertices} and will be presented now.
\begin{defn}
Consider a graph $G={(V,\mathcal{E})}$ and an ordering $\prec$ of the vertices of $G$. The 
set of future neighbours of a vertex $v$ is defined as 
 $ v^+ = \{ w \in V: v\prec w \; \textnormal{and} \; v\sim w\} .$
 The set of all predecessors of a vertex  $v\in V$ with respect to   $\prec$ is defined as
$ v^- = \{ u \in V: u\prec v\}$.

\end{defn}
	\begin{defn}\label{DefElimin}
 An ordering $\prec$ of the vertices of a graph  $G$ is  said to be an eliminating order 
 {if $v^+$ is complete for all $v\in V$}.
 \end{defn}

 In this section, we present  a characterization
 of  the   eliminating orders in the case of the graph $A_n$. An algorithm that generates all eliminating orders for a general 
 graph is given by \cite[]{chandran2003}.
 \begin{pro}\label{order}
 Consider a graph $A_{n}: 1-2-3-\hdots-n$. All  eliminating orders are
 obtained by an  intertwining  of two sequences
 $
 1\prec\hdots\prec M\ \ {\rm and}\ \ n\prec \hdots \prec M
 $
for an $M\in V$. There are $2^{n-1}$ eliminating orders on the graph $A_n$.
\end{pro}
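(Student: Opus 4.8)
The plan is to reduce the eliminating‑order condition on $A_n$ to a purely local statement about consecutive vertices, to describe all orderings satisfying it, and then to count them.

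\textbf{Step 1 (localization).} In $A_n$ an interior vertex $i$ (with $2\le i\le n-1$) has exactly the two neighbours $i-1$ and $i+1$, and these are \emph{not} adjacent, so $\{i-1,i+1\}$ is not complete; an endpoint, $1$ or $n$, has a single neighbour. Hence for any ordering $\prec$ the set $v^+$ is automatically complete when $v\in\{1,n\}$, while for an interior vertex $i$ it is complete exactly when $i^+\ne\{i-1,i+1\}$, i.e. when at least one of $i-1,i+1$ precedes $i$. Thus $\prec$ is an eliminating order if and only if no interior vertex has both of its neighbours among its future neighbours.

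\textbf{Step 2 (solving the local condition).} Orient each edge $\{i,i+1\}$ by $\prec$. By Step 1 the forbidden situation at an interior vertex $i$ is ``$i\prec i-1$ and $i\prec i+1$'', i.e. the edge $\{i-1,i\}$ points towards its smaller endpoint while the edge $\{i,i+1\}$ points towards its larger endpoint. I claim this forces: if some edge points towards its smaller endpoint, then no edge farther along the path points towards its larger endpoint. Indeed, otherwise the orientation would switch somewhere, producing two consecutive edges $\{r,r+1\}$ and $\{r+1,r+2\}$ with the first pointing to $r$ and the second to $r+2$; then $r+1$ is interior and has both neighbours as future neighbours, contradicting Step 1. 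Consequently there is a unique index $M\in\{1,\dots,n\}$ such that $\{1,2\},\dots,\{M-1,M\}$ all point towards the larger endpoint and $\{M,M+1\},\dots,\{n-1,n\}$ all point towards the smaller endpoint; this says precisely $1\prec 2\prec\cdots\prec M$ and $n\prec n-1\prec\cdots\prec M$ (a one‑term sequence being read as a single vertex when $M=1$ or $M=n$). In particular $M$ lies above every other vertex, so it is the $\prec$‑maximum, and $\prec$ is an intertwining (a common linear extension) of these two sequences.

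\textbf{Step 3 (converse and count).} Conversely, let $\prec$ extend the two chains $1\prec\cdots\prec M$ and $n\prec\cdots\prec M$. Then $M^+=\emptyset$; for $1\le v<M$ the neighbour $v+1$ satisfies $v\prec v+1$ and the neighbour $v-1$ (when present) satisfies $v-1\prec v$, so $v^+=\{v+1\}$; for $M<v\le n$ the neighbour $v-1$ satisfies $v\prec v-1$ and the neighbour $v+1$ (when present) satisfies $v+1\prec v$, so $v^+=\{v-1\}$. In every case $v^+$ is complete, so by Step 1 the order $\prec$ is eliminating. Since $M$ is recovered from $\prec$ as its maximum, the eliminating orders are partitioned according to the value of $M$; for a fixed $M$ such an order is exactly an interleaving of the chain on $\{1,\dots,M-1\}$ with the chain on $\{M+1,\dots,n\}$ (the common top $M$ placed last), and there are $\binom{(M-1)+(n-M)}{M-1}=\binom{n-1}{M-1}$ of those. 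Summing, the number of eliminating orders is $\sum_{M=1}^{n}\binom{n-1}{M-1}=2^{n-1}$.

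\textbf{Expected obstacle.} The only step carrying real content is the monotonicity in Step 2 — once the $\prec$‑orientation along the path turns around it cannot turn back — which I would obtain as above by exhibiting an interior vertex that is a ``source''. Steps 1 and 3 and the binomial identity are routine; I would simply be mildly careful to record that $M$ is determined by $\prec$, so that the families counted in Step 3 are pairwise disjoint.
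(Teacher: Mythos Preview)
Your proof is correct, but it takes a genuinely different route from the paper's. The paper argues recursively on the $\prec$-minimum: since an interior vertex $i$ would have $i^+=\{i-1,i+1\}$ incomplete, the minimum must be an endpoint; removing it leaves an eliminating order on an $A_{n-1}$ subgraph, and iterating gives the intertwined chains together with the count $2^{n-1}$ directly from the $n-1$ binary choices of which endpoint to peel off. You instead work globally through the induced edge orientation, reducing the eliminating condition to the absence of a ``$0$ then $1$'' pattern in the sequence $(\sigma_i)$, which forces the monotone shape $1^{M-1}0^{n-M}$; your count then proceeds by fixing the maximum $M$ and summing $\binom{n-1}{M-1}$. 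Your argument is a bit longer but more structural --- it makes transparent why $M$ is the $\prec$-maximum and why the families for different $M$ are disjoint --- whereas the paper's recursion is shorter and delivers the power-of-two count without a binomial identity.
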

\begin{proof}
Consider an eliminating order $\prec$ on $G=A_n$.  Since the minimal element of an eliminating order $\prec$ on a graph $A_n$
is one of the  exterior vertices $1,n$ of the graph, it starts with $1$ or $n$, say it is $1$.
It follows from Definition \ref{DefElimin} that an eliminating order without its minimal element forms again
an eliminating order on the graph $A_{n-1}$ obtained from $G$ by suppressing $1$ or $n$. The element following $1$
may be 2 or $n$. This recursive argument proves that   in an eliminating order the sequences $1\prec2\hdots\prec M$ and $n\prec n-1\prec \hdots \prec M$ must appear intertwined.
We also see that we construct in this way $2^{n-1}$ different orders.

Conversely, if  an order $\prec$ on $G$  is obtained by intertwining of  the sequences $1\prec2\hdots\prec M$ and $n\prec n-1 \prec \hdots \prec M$,
it follows that the sets $v^+$  of future neighbours of $v$ are singletons or empty(for $v=M$). Thus the order   $\prec$ is eliminating.
\end{proof}


\subsection{Generalized power functions}

In this section, we define and study  generalized power functions on the cones $P_G$ and $Q_G$.
{
Here we introduce useful notation.
For $1 \le i \le j \le n$, let $\{i : j \} \subset V$ be the set of $a \in V$ for which $i \le a \le j$.
Then, for $y \in Z_G$ and $1 \le i \le n$, the matrix $y_{\{1:i\}}$ is the upper left submatrix of $y$ of size $i$,
 and $y_{\{i : n\}}$ is the lower right submatrix of size $n-i+1$.  }
Recall that
on the cone $S_n^+$, the generalized power functions are 
$\Delta_{\us}(y) = \prod_{i=1}^n |y_{\{1:i\}}|^{s_i-s_{i+1}}$ and 
$\delta_{\us}(y) = \prod_{i=1}^n |y_{\{i:n\}}|^{s_{i}-s_{i-1}}$,  with $s_0=s_{n+1}=0$.

\begin{defn}\label{DEFdeltas}
For $\us \in \mathbb{R}^V$, setting $\det y^{}_{\emptyset} = 1=\det \eta^{}_{\emptyset }$,
 we define 
\begin{align}
\Delta_{\us}^{\prec}(y)
 &:= \prod_{v \in V}
 \Bigl( \frac{\det y^{}_{\{v\} \cup v^-}}{\det y^{}_{v^-}}
 \Bigr)^{s_v}
 \qquad (y \in P_G),
 \label{eqn:def_of_Delta}\\
\delta_{\us}^{\prec}(\eta)
 &:= \prod_{v \in V}
 \Bigl( \frac{\det \eta^{}_{\{v\} \cup  v^+}}{\det \eta^{}_{v^+}}
 \Bigr)^{s_v}
 \qquad (\eta \in Q_G).
\label{eqn:def_of_delta}
\end{align}
\end{defn}

Note that Definition \ref{DEFdeltas} applied to the complete graph with the usual order $1<\hdots <n$ gives $\Delta_{\us}$ and $\delta_{\us} $.
 For any $\us$ the following formula  $\delta_{\us}(y^{-1})=\Delta_{-\us}(y)$ is well known.
In Theorem \ref{delta-Delta} we find an analogous formula in the case of the cones $P_G$
and $Q_G$.

We will see in Theorem \ref{delta-Delta} that on the cones related to the graphs $A_n$,
different order-depending power functions $\Delta_{\us}^{\prec}$ and
$\delta_{\us}^{\prec}$  defined in Definition \ref{DEFdeltas} may be expressed in terms
of explicit "$M$-power functions" ${ \Delta}^{(M)}_{\us}$
and ${\mathcal \delta}^{(M)}_{\us}$ that will be defined {below}. They depend only on the choice of $M\in V$.
 
\begin{defn}\label{M-powers}
 Let $M\in V$, $y\in P_G$ and $\eta \in Q_G$.
 We define the $M$-power functions ${ \Delta}^{(M)}_{\us}(y)$ on $P_G$
 and  ${  \delta}^{(M)}_{\us}(x)$ on $Q_G$ by the following formulas:

\begin{equation}\label{P(M)}
 {\Delta}^{(M)}_{\us}(y)= \prod_{i=1}^{M-1} |y_{\{1:i\}}|^{s_i-s_{i+1}}|y|^{s_{M}}
 \prod_{i=M+1}^n |y_{\{i:n\}}|^{s_{i}-s_{i-1}},
\end{equation}

\begin{equation}\label{delta(M)}
 \delta_{\us}^{(M)}(\eta)= \frac{\prod_{i=1}^{M-1} |\eta_{\{i : i+1\}} |^{s_i}
 \prod_{i=M+1}^n |\eta_{\{i-1:i\}} |^{s_i}}
 {\prod_{i=2}^{M-1} \eta_{ii}^{s_{i-1}}\cdot \eta_{MM}^{s_{M-1} - s_M  + s_{M+1}}\cdot \prod_{i=M+1}^{n-1} \eta_{ii}^{s_{i+1}}}.
\end{equation}

\end{defn}
Observe that for $M=1,n$\, there are $n-1$ factors in the denominator of \eqref{delta(M)}, and for $M=2,\hdots n-1$ 
there are $n-2$ factors (powers of $\eta_{22}\hdots \eta_{n-1,n-1}$).

 The main result of this section is the following theorem.

 \begin{thm}\label{delta-Delta}
 Consider a graph $G=A_n$ with an  eliminating order $\prec$. Let $M$ be the maximal element with respect to $\prec$. 
 Then for all $y\in P_G$, {we have}
\begin{equation}\label{delta-Delta-P}
 \delta_{\us}^{\prec}(\pi(y^{-1})) =\Delta_{-\us}^{\prec}(y)=\Delta_{-\us}^{(M)}(y).
\end{equation}

\end{thm}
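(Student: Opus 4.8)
The plan is to prove the two equalities in \eqref{delta-Delta-P} separately. The second equality, $\Delta_{-\us}^{\prec}(y) = \Delta_{-\us}^{(M)}(y)$, is a purely combinatorial identity of generalized power functions on $P_G$: I would argue that for any eliminating order $\prec$ with maximal element $M$, the sets $\{v\} \cup v^-$ appearing in \eqref{eqn:def_of_Delta} are, as $v$ runs over $V$, exactly the ``staircase'' sets determined by $M$. Concretely, by Proposition \ref{order} the order $\prec$ intertwines $1 \prec 2 \prec \cdots \prec M$ and $n \prec n-1 \prec \cdots \prec M$; at any stage the set of predecessors $v^-$ of the next vertex $v$ is an interval of the form $\{1:a\} \cup \{b:n\}$ with $a < v < b$ (or one of the two pieces empty), and adjoining $v$ either extends the left piece to $\{1:v\}$ or the right piece to $\{v:n\}$. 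Hence $\det y_{\{v\}\cup v^-}/\det y_{v^-}$ telescopes: since $y \in Z_{A_n}$ has a band structure, the submatrix indexed by $\{1:a\}\cup\{b:n\}$ block-diagonalizes as $y_{\{1:a\}} \oplus y_{\{b:n\}}$ (there are no edges between $\{1:a\}$ and $\{b:n\}$ when $a+1 < b$), so $\det y_{\{1:a\}\cup\{b:n\}} = \det y_{\{1:a\}} \cdot \det y_{\{b:n\}}$, and the ratio collapses to either $\det y_{\{1:v\}}/\det y_{\{1:v-1\}}$ or $\det y_{\{v:n\}}/\det y_{\{v+1:n\}}$. Collecting the exponents $s_v$ and using $s_0 = s_{n+1} = 0$ together with $|y| = \det y_{\{1:M\}\cup\{M:n\}}\cdot$(correction)$\,=\det y$ for the top vertex $v = M$, reindexing gives exactly the three-part product \eqref{P(M)} with $\us$ replaced by $-\us$. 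In particular the expression depends only on $M$, not on the chosen intertwining.

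For the first equality, $\delta_{\us}^{\prec}(\pi(y^{-1})) = \Delta_{-\us}^{\prec}(y)$, the idea is to transport the classical identity $\delta_{\us}(y^{-1}) = \Delta_{-\us}(y)$ on $S_n^+$ to the banded setting via the block-inversion formula. Fix $v \in V$ with future-neighbour set $v^+$ (a singleton or empty, by the eliminating property), and let $\eta = \pi(y^{-1})$. I would show
\[
 \frac{\det \eta_{\{v\}\cup v^+}}{\det \eta_{v^+}}
 = \frac{\det y_{v^-}}{\det y_{\{v\}\cup v^-}}.
\]
To see this, note $v^+$ and $v^-$ partition $V \setminus \{v\}$ into the ``later'' and ``earlier'' vertices, but more is true: because $\prec$ is eliminating and $v^+$ is complete (hence of size $\le 1$ in $A_n$), the vertices in $v^-$ not adjacent to anything in $\{v\}\cup v^+$ decouple, and the Schur complement of $y_{v^-}$ in $y_{\{v\}\cup v^- \cup v^+ \cup (\text{rest})}$ reduces to a computation on the small block $\{v\}\cup v^+$. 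Precisely: by the standard formula for the inverse of a symmetric matrix, $(y^{-1})_{\{v\}\cup v^+}$ equals the inverse of the Schur complement $y_{\{v\}\cup v^+} - y_{\{v\}\cup v^+,\,W}\,(y_W)^{-1}\,y_{W,\,\{v\}\cup v^+}$ where $W = V\setminus(\{v\}\cup v^+)$; since $\eta_{\{v\}\cup v^+} = (y^{-1})_{\{v\}\cup v^+}$ by definition of $\pi$ (all these entries correspond to edges or diagonal), taking determinants and using the cofactor/Jacobi identity $\det(y^{-1})_{A} = \det y_{V\setminus A}/\det y$ for $A = \{v\}\cup v^+$ and for $A = v^+$ yields the displayed ratio after the quotient $\det \eta_{\{v\}\cup v^+}/\det\eta_{v^+} = (\det y_{W}/\det y)/(\det y_{W\cup\{v\}}/\det y) = \det y_{W}/\det y_{W\cup\{v\}}$, and then identifying $W = v^-$ and $W\cup\{v\} = \{v\}\cup v^-$ (valid because $v^+ \cup v^- \cup \{v\} = V$). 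Raising to the power $s_v$ and taking the product over $v$ converts \eqref{eqn:def_of_delta} into the reciprocal of \eqref{eqn:def_of_Delta}, i.e.\ into $\Delta_{-\us}^{\prec}(y)$.

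The main obstacle I anticipate is the bookkeeping in the first equality: one must be careful that the Jacobi complementary-minor identity $\det (y^{-1})_A = \det y_{A^c}/\det y$ is being applied to the \emph{full} matrix $y^{-1} \in S_n^+$ (not to $\eta$), and that the entries of $(y^{-1})_{\{v\}\cup v^+}$ and $(y^{-1})_{v^+}$ genuinely coincide with those of $\eta$ — this is exactly where the hypothesis that $\prec$ is eliminating is used, since it guarantees $v^+$ is complete so that every index pair inside $\{v\}\cup v^+$ lies in $E$ and is therefore retained by $\pi$. A clean way to organize this is to process the vertices in $\prec$-order and peel off the minimal vertex at each step, reducing to $A_{n-1}$ exactly as in the proof of Proposition \ref{order}; the recurrent construction of Section \ref{ReccConstr} is tailor-made for this induction. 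Once both equalities are in hand, combining them gives \eqref{delta-Delta-P}, and as a by-product the $\prec$-dependence disappears, leaving only the dependence on $M$.
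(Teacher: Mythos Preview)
Your argument for the second equality $\Delta_{-\us}^{\prec}(y)=\Delta_{-\us}^{(M)}(y)$ is essentially the paper's: the paper also uses Proposition~\ref{order} to see that $\{v\}\cup v^-$ is an interval-union $\{1:a\}\cup\{b:n\}$, applies Lemma~\ref{nonconnected} to factorise its determinant, and observes that the quotient $\det y_{\{v\}\cup v^-}/\det y_{v^-}$ collapses to $\det y_{\{1:v\}}/\det y_{\{1:v-1\}}$ or $\det y_{\{v:n\}}/\det y_{\{v+1:n\}}$ according as $v<M$ or $v>M$.

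For the first equality there is a genuine gap. Your Jacobi step is fine: since $\{v\}\cup v^+$ is complete, $\eta$ agrees with $y^{-1}$ on that block, and Jacobi gives
\[
\frac{\det\eta_{\{v\}\cup v^+}}{\det\eta_{v^+}}
=\frac{\det y_{V\setminus(\{v\}\cup v^+)}}{\det y_{V\setminus v^+}}.
\]
But you then set $W=V\setminus(\{v\}\cup v^+)$ and assert $W=v^-$ ``because $v^+\cup v^-\cup\{v\}=V$''. This is false: $v^+$ contains only the future \emph{neighbours} of $v$, not all $\prec$-successors. For $v<M$ one has $v^+=\{v+1\}$, so $W=\{1:v-1\}\cup\{v+2:n\}$, whereas $v^-=\{1:v-1\}\cup\{b:n\}$ for some $b$ that depends on the particular intertwining; these sets are different in general.

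The repair is immediate and uses exactly the tool you already deployed for the second equality: both $W$ and $v^-$ (and likewise $W\cup\{v\}$ and $\{v\}\cup v^-$) are disconnected interval-unions, so Lemma~\ref{nonconnected} factorises their determinants, and the extra factor $\det y_{\{v+2:n\}}$ (respectively $\det y_{\{b:n\}}$) cancels in the ratio on each side, leaving $\det y_{\{1:v-1\}}/\det y_{\{1:v\}}$ in both cases. The paper sidesteps this by not passing through $\Delta_{-\us}^{\prec}$ at all: it computes $\delta_{\us}^{\prec}(\pi(y^{-1}))$ directly in terms of $|y_{\{1:i\}}|$ and $|y_{\{i:n\}}|$ via Lemma~\ref{determinant} (the $2\times2$ case of the Jacobi identity you invoke) together with Lemma~\ref{nonconnected}, and reads off $\Delta_{-\us}^{(M)}(y)$. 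Your route via the general Jacobi minor identity is slightly more conceptual, but you still need Lemma~\ref{nonconnected} to close the gap.
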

The proof of Theorem \ref{delta-Delta} is preceded by a series of elementary lemmas.

\begin{lem}\label{nonconnected}
 Let $y\in P_G$ and $i< j < j+1< k < m$.
  The determinant of the submatrix $y_{\{i \,: \,j\}\cup \{k \,: \,m\}}$ can be factorized as
$|y_{\{i \,: \,j\}\cup \{k \,: \,m\}}|=|y_{\{i \,: \,j\}}||y_{\{k\, : \,m\}}|$.
\end{lem}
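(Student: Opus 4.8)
The plan is to exploit the block structure that the $A_n$ graph imposes on matrices $y \in Z_G$. The index set $J := \{i:j\}\cup\{k:m\}$ splits into the two "intervals" $\{i:j\}$ and $\{k:m\}$, and because $j+1 < k$ (so the two blocks are separated by at least one missing vertex), there is no edge of $A_n$ joining any vertex of $\{i:j\}$ to any vertex of $\{k:m\}$. Hence for $a \in \{i:j\}$ and $b \in \{k:m\}$ we have $(a,b)\notin E$, and by the very definition of $Z_G$ the corresponding entry satisfies $y_{ab}=0$. Therefore, after listing the rows and columns of $y_J$ in the order $i,i+1,\dots,j,k,k+1,\dots,m$, the submatrix $y_J$ is block-diagonal:
\[
y_J = \begin{pmatrix} y_{\{i:j\}} & 0 \\ 0 & y_{\{k:m\}} \end{pmatrix}.
\]

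From here the statement is immediate: the determinant of a block-diagonal matrix is the product of the determinants of the diagonal blocks, so $|y_J| = |y_{\{i:j\}}|\,|y_{\{k:m\}}|$. One should perhaps remark that this argument does not even use positive definiteness of $y$; membership in $Z_{A_n}$ is all that is needed, and the hypothesis $y \in P_G$ is stated only because that is the setting in which the lemma will be applied.

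There is essentially no obstacle here. The only point requiring a line of care is the bookkeeping that the ordering of the combined index set respects the two blocks — i.e. that every index of $\{i:j\}$ precedes every index of $\{k:m\}$, which holds since $j < k$ — so that the permutation bringing $y_J$ into block form is trivial and introduces no sign. After that, the factorization of the determinant along the block decomposition finishes the proof.
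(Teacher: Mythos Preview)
Your proof is correct. The paper does not give a proof of this lemma at all; it simply lists it among a ``series of elementary lemmas'' and moves on. Your argument --- that the absence of any edge between $\{i:j\}$ and $\{k:m\}$ in $A_n$ forces the off-diagonal blocks of $y_J$ to vanish, so that $y_J$ is block-diagonal and its determinant factors --- is exactly the intended one-line justification, and your remark that only $y\in Z_{A_n}$ (not positive definiteness) is needed is also correct.
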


\begin{lem}\label{determinant}
 Let $y\in P_G$ and  $\eta = \pi(y^{-1})$. Then for all $i, i+1\in V$, we have
\begin{equation*}
 \begin{vmatrix}
 \eta_{\{i,i+1\}}
 \end{vmatrix}= |y|^{-1}\vert y_{V\backslash\{i,i+1\}} \vert.
\end{equation*}

\end{lem}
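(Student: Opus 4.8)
The identity to be proved, $|\eta_{\{i,i+1\}}| = |y|^{-1}\,|y_{V\setminus\{i,i+1\}}|$ for $\eta = \pi(y^{-1})$, is an instance of the classical relation between a minor of $y^{-1}$ and the complementary minor (more precisely the complementary cofactor) of $y$. I would base the proof on this observation. Write $K = y^{-1}$ (the full inverse in $S_n$, not its projection). The complement of $\{i,i+1\}$ in $V$ is $\{1:i-1\}\cup\{i+2:n\}$, a union of two ``intervals'' separated by the removed pair. The key structural fact about the $A_n$ graph is that $\{i,i+1\}$ is a clique separating $\{1:i-1\}$ from $\{i+2:n\}$, so I should combine the cofactor formula with Lemma \ref{nonconnected}.

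**Key steps.** First, recall Jacobi's identity (the complementary-minor formula for inverses): for any invertible symmetric matrix $y$ and index set $I\subset V$ with complement $I^c$,
\[
\det\bigl((y^{-1})_I\bigr) = \frac{\det y_{I^c}}{\det y}.
\]
Apply this with $I = \{i,i+1\}$, so $I^c = V\setminus\{i,i+1\}$. This gives $\det\bigl((y^{-1})_{\{i,i+1\}}\bigr) = |y_{V\setminus\{i,i+1\}}| / |y|$. Second, observe that the entries of $(y^{-1})_{\{i,i+1\}}$ are indexed only by pairs inside $\{i,i+1\}$, and every such pair $(k,\ell)$ with $k,\ell\in\{i,i+1\}$ satisfies $(k,\ell)\in E$ (the diagonal entries are always in $E$, and $i\sim i+1$ in $A_n$); hence $\pi$ leaves those entries untouched, i.e. $(y^{-1})_{\{i,i+1\}} = \bigl(\pi(y^{-1})\bigr)_{\{i,i+1\}} = \eta_{\{i,i+1\}}$. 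Combining the two displays yields the claim. If one prefers to avoid invoking Jacobi's identity as a black box, one can instead expand $|y_{V\setminus\{i,i+1\}}|$ using Lemma \ref{nonconnected} to factor the complement submatrix along the gap at $\{i,i+1\}$ and then identify the result via the Schur complement of $y_{\{i,i+1\}}$ in $y$; but the cleanest route is simply to cite Jacobi's identity.

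**Main obstacle.** There is essentially no analytical difficulty here — the statement is a one-line consequence of a standard linear-algebra identity. The only points requiring care are bookkeeping: (i) making sure the index set $V\setminus\{i,i+1\}$ is correctly identified as the complement of $\{i,i+1\}$ (so that Jacobi applies with no sign issues, which for principal minors there are none), and (ii) justifying that $\eta_{\{i,i+1\}}$ — a submatrix of the \emph{projection} $\pi(y^{-1})$, not of $y^{-1}$ itself — genuinely equals $(y^{-1})_{\{i,i+1\}}$, which holds precisely because $\{i,i+1\}$ is a clique of $G=A_n$ so all its internal index pairs lie in $E$. Once these two remarks are in place, the proof is immediate, which is presumably why the authors state it as a lemma without comment.
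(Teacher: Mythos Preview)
Your proof is correct. Jacobi's complementary-minor identity for principal minors,
\[
\det\bigl((y^{-1})_I\bigr)=\frac{\det y_{I^c}}{\det y},
\]
applied with $I=\{i,i+1\}$, together with the observation that $\{i,i+1\}$ is a clique so the projection $\pi$ does not alter the entries of $(y^{-1})_{\{i,i+1\}}$, gives the result in one stroke.

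The paper argues differently and more computationally: it writes out the three entries of $\eta_{\{i,i+1\}}$ one by one via the single-entry cofactor formula (so $\eta_{ii}=|y|^{-1}|y_{V\setminus\{i\}}|$ and, exploiting the tridiagonal shape of $y$, $\eta_{i,i+1}=-y_{i,i+1}\,|y|^{-1}|y_{V\setminus\{i,i+1\}}|$), expands the $2\times 2$ determinant explicitly, and then invokes Lemma~\ref{nonconnected} to factor the various $|y_{V\setminus\{\cdot\}}|$ into products of interval minors and recognise the remaining bracket as $|y|$. Your route is shorter and conceptually cleaner, and it makes transparent that the statement is really a general fact about inverses restricted to a clique, not something special to $2\times2$ blocks. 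The paper's route, on the other hand, is entirely self-contained (no named identity is invoked) and produces along the way the explicit expression for $\eta_{i,i+1}$, which is reused later in Lemma~\ref{determinantElargi} and in the proof of Theorem~\ref{delta-Delta}.
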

\begin{proof}
 We repeatedly  use the cofactor formula for an inverse matrix. We use $\eta_{ii} = |y|^{-1} |y_{V\backslash \{i\}}|$ and show that
 $\eta_{i,i+1} =- y_{i,i+1}|y|^{-1} |y_{V\backslash \{i,i+1\}}|$.\, It follows that \\
 $\begin{vmatrix}
 \eta_{\{i,i+1\}}
 \end{vmatrix}= |y|^{-2} \vert y_{V\backslash\{i,i+1\}} \vert
\left[ |y_{\{i+1 \,: \,n\}}||y_{\{1 \,: \,i\}}|- y_{i,i+1}^2 |y_{\{1 \,: \,i-1\}}||y_{\{i+2 \,: \,n\}}| \right]$.
The last factor in brackets equals $|y|$.
\end{proof}

\begin{proof} (of Theorem \ref{delta-Delta})

\noindent
{\it Part 1:} $\delta_{\us}^{\prec}(\pi(y^{-1})) = {{\Delta}^{(M)}_{-\us}(y).}$
From Proposition \ref{order}, we have
$$
{i^+ = \begin{cases} \{i+1\} & \mbox{ if }i \le M-1, \\ \emptyset &\mbox{ if }i=M, \\ \{i-1\} & \mbox{ if }i \ge M+1. \end{cases}}
$$
{Using $\eta_{ii} = |y|^{-1} |y_{V\backslash \{i\}}|$ with $\eta=\pi(y^{-1})$ and Lemmas \ref{nonconnected} and \ref{determinant}},
 we get 
$\delta_{\us}^{\prec} (\pi(y^{-1})) =\Delta^{(M)}_{-\us}(y).$\\
\noindent
{\it Part 2:} $ {\Delta_{\us}^{\prec}(y)={\Delta}^{(M)}_{\us}(y) }$.
Let us first consider the eliminating order $\prec_M$ {given by}
\begin{equation}\label{order0}
1\prec_M 2\prec_M  \hdots\prec_M  M-1\prec_M  n\prec_M {n-1} \prec_M \hdots\prec_M  M+1\prec_M  M.
\end{equation}
Using $\eta_{ii} = |y|^{-1} |y_{V\backslash \{i\}}|$, {Lemmas \ref{nonconnected} and \ref{determinant} again}, 
 we get
 $\Delta_{\us}^{\prec_M} (y) = {\Delta}^{(M)}_{\us}(y).$

It is easy to see using Proposition \ref{order} and the factorization from Lemma \ref{nonconnected}
  that for any other eliminating order $\prec$, the factors of  $\Delta_{\us}^{\prec} (y)$
 under the powers $s_i$ are exactly the same as for $\prec_M$.
Indeed, if $i\leq M-1$, let $n-j$ be the largest vertex greater than {M} such that $n-j \prec i$.
Then, the factor under the power $s_i$ is
$$\frac{|y_{\{i\}\cup i^-}|}{|y_{i^-|}}= {\frac{|y_{\{1:i\}}||y_{\{n-j:n\}}|}{|y_{\{1:i-1\}}||y_{\{n-j:n\}}|}
= \frac{|y_{\{1:i\}}|}{|y_{\{1:i-1\}}|}}.$$
 A similar argument shows that this is also true for $i= M$ and for $i>M$.
\end{proof}

\begin{cor}\label{Cor_delta}
Let $\prec_1$ and $\prec_2$ be two eliminating orders on $G$ such that $\max_{\prec_1}V=\max_{\prec_2}V$.
Then \,$\delta_{\us}^{\prec_1} (\eta)=\delta_{\us}^{\prec_2} (\eta)$\, 
for all $\eta\in Q_G$.  If $\max_{\prec}V=M$ then we have $
\delta_{\us}^{\prec} (\eta)=\delta_{\us}^{(M)}(\eta).
$
\end{cor}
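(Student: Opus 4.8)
The plan is to deduce Corollary \ref{Cor_delta} directly from Theorem \ref{delta-Delta} together with the bijectivity of the mean map, so that essentially no new computation is required. First I would recall from the last paragraph of Section \ref{sec:1} that the map $P_G \to Q_G$, $y \mapsto \pi(y^{-1})$, is a bijection. Hence every $\eta \in Q_G$ can be written as $\eta = \pi(y^{-1})$ for a unique $y \in P_G$. The idea is then to transport the two statements we want to prove about $\delta^{\prec}_{\us}$ on $Q_G$ back to statements about $\Delta^{\prec}_{-\us}$ on $P_G$, where Theorem \ref{delta-Delta} already tells us everything.

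For the second assertion, let $\prec$ be an eliminating order with $\max_{\prec} V = M$. For arbitrary $\eta \in Q_G$ write $\eta = \pi(y^{-1})$ with $y \in P_G$. By \eqref{delta-Delta-P} we have $\delta^{\prec}_{\us}(\eta) = \delta^{\prec}_{\us}(\pi(y^{-1})) = \Delta^{(M)}_{-\us}(y)$. The right-hand side depends on $\prec$ only through $M$; in fact, unwinding definitions, $\Delta^{(M)}_{-\us}(y)$ is by \eqref{P(M)} an explicit expression in the upper-left and lower-right minors of $y$, and applying Lemmas \ref{nonconnected} and \ref{determinant} (exactly as in Part 1 of the proof of Theorem \ref{delta-Delta}) rewrites it as the explicit formula \eqref{delta(M)} evaluated at $\eta$, i.e. as $\delta^{(M)}_{\us}(\eta)$. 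Thus $\delta^{\prec}_{\us}(\eta) = \delta^{(M)}_{\us}(\eta)$ for every $\eta \in Q_G$, which is the displayed equality in the corollary.

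The first assertion is then immediate: if $\prec_1$ and $\prec_2$ are eliminating orders with $\max_{\prec_1} V = \max_{\prec_2} V = M$, then by what we have just shown $\delta^{\prec_1}_{\us}(\eta) = \delta^{(M)}_{\us}(\eta) = \delta^{\prec_2}_{\us}(\eta)$ for all $\eta \in Q_G$. Alternatively, and slightly more in the spirit of Part 2 of the proof of Theorem \ref{delta-Delta}, one can argue directly on $Q_G$ by noting that for an eliminating order the future-neighbour sets $v^+$ are singletons (or empty for $v = M$), so each factor $\det \eta_{\{v\} \cup v^+} / \det \eta_{v^+}$ in \eqref{eqn:def_of_delta} is of the form $|\eta_{\{i,i+1\}}|/\eta_{jj}$ or $|\eta_{\{i-1,i\}}|/\eta_{jj}$ and hence does not depend on how the two chains $1 \prec \cdots \prec M$ and $n \prec \cdots \prec M$ are intertwined; grouping the factors by the power $s_v$ gives the same product for $\prec_1$ and $\prec_2$.

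I do not expect any serious obstacle here, since this corollary is a bookkeeping consequence of Theorem \ref{delta-Delta}; the only point that requires a little care is making explicit that $\Delta^{(M)}_{-\us}(y)$ expressed in terms of $y$-minors really does coincide, after the substitution $\eta = \pi(y^{-1})$ and the application of Lemmas \ref{nonconnected} and \ref{determinant}, with the quantity $\delta^{(M)}_{\us}(\eta)$ defined purely in terms of $\eta$ in \eqref{delta(M)} — but this identification is exactly the content of Part 1 of the proof of Theorem \ref{delta-Delta} and so can simply be cited. The mildly delicate index case is $v = M$, where the factor in \eqref{delta(M)} carries the exponent $s_{M-1} - s_M + s_{M+1}$ on $\eta_{MM}$; I would double-check that this matches the contribution coming from $\Delta^{(M)}_{-\us}$ at $i = M$, but this is the same check already performed in Theorem \ref{delta-Delta}.
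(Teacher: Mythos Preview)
Your proposal is correct and matches the paper's implicit argument: since the paper gives no separate proof, the corollary is meant to follow immediately from Theorem~\ref{delta-Delta} together with the surjectivity of $P_G \to Q_G$, $y \mapsto \pi(y^{-1})$, exactly as you outline. One small terminological slip: the map $y \mapsto \pi(y^{-1})$ is not ``the mean map'' in general (that is $m_{\us}^{(M)}$), but this does not affect the argument, and your alternative direct verification via the description of $v^+$ from Proposition~\ref{order} is in fact the cleanest route to $\delta_{\us}^{\prec}=\delta_{\us}^{(M)}$.
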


 
\section{ Recurrent construction of the cones $P_G$ and $Q_G$ and changes of variables}\label{ReccConstr}
In this section we introduce  very useful recurrent constructions of the cones $P_{A_n}$ and $Q_{A_n}$
from the cones $P_{A_{n-1}}$ and $Q_{A_{n-1}}$. There are two variants  of them for  $A_{n-1}:2-\hdots-n$ {and} $A_{n-1}:1-\hdots-{(n-1)}$.
Corresponding changes of variables for integration on $P_{A_n}$ and $Q_{A_n}$ are introduced.
\begin{pro} \label{changeOfVar}
 \begin{enumerate}
 \item For $n\geq 2$, let
$\Phi_n:\mathbb{R}^+\times \mathbb{R}\times P_{A_{n-1}}\longrightarrow P_{A_n}$,
$(a,b,z)\longmapsto y$ with \\
$${y}=A(b)
\begin{pmatrix}
 a & 0&\cdots & 0\\
 0\\
 \vdots & & {z}\\
 0
\end{pmatrix}
\transp{A}(b),
\qquad
A(b)=\begin{pmatrix}
 1 \\
 b & 1\\
 \vdots & & \ddots\\
 0&\ldots& 0&1
\end{pmatrix},
$$
{and let}
 $\Psi_n:\mathbb{R}^+\times \mathbb{R}\times Q_{A_{n-1}}\longrightarrow Q_{A_n}$,
$(\alpha,\beta,x)\longmapsto \eta$ with
$$\eta = \pi\left(
\transp{A}(\beta)
\begin{pmatrix}
 \alpha & 0&\cdots & 0\\
 0\\
 \vdots & & x\\
 0
\end{pmatrix}
A(\beta)\right).
$$
{Then the} maps $\Phi_n$ and $\Psi_n$  are bijections.
\item Let
$\tilde{\Phi}_n:\mathbb{R}^+\times \mathbb{R}\times P_{A_{n-1}}\longrightarrow P_{A_n}$,
$(a,b,z)\longmapsto \tilde{y}$ with 
$$ \tilde{y}=\transp{B}(b)
\begin{pmatrix}
  & & & 0\\
  & z & & \vdots\\
  & & & 0\\
  0 & \cdots & 0& a
\end{pmatrix}
B(b),
 \quad 
B(b)=\begin{pmatrix}
 1 \\
 0 & 1\\
 \vdots & & \ddots\\
 0&\ldots& b&1
 \end{pmatrix},
$$
{and let} $\tilde{\Psi}_n:\mathbb{R}^+\times \mathbb{R}\times Q_{A_{n-1}}\longrightarrow Q_{A_n}$,
$(\alpha,\beta,x)\longmapsto \tilde{\eta}$ with
$$\tilde{\eta} = \pi\left(
B(\beta)
\begin{pmatrix}
  & & & 0\\
  & x & & \vdots\\
  & & & 0\\
  0 & \cdots & 0& \alpha
\end{pmatrix}
\transp{B}(\beta)\right).
$$
{Then the} maps $\tilde{\Phi}_n$ and $\tilde{\Psi}_n$ are bijections.
\item \label{jacobianLemma}
The Jacobians of the changes of variables $y=\Phi_n(a,b,z)$ and $y=\tilde{\Phi}_n(a,b,z)$ are {given by}
 \begin{equation}
  J_{\Phi_n}(a,b,z) = a, \quad  J_{\tilde{\Phi}_n}(a,b,z) = a.
 \end{equation}
The Jacobians of the changes of variables $\eta=\Psi_n(\alpha,\beta,x)$ and $\eta=\tilde{\Psi}_n(\alpha,\beta,x)$ are {given by}
  \begin{equation}
 J_{\Psi_n}(\alpha,\beta,x) = x_{22}, \quad  J_{\tilde{\Psi}_n}(\alpha,\beta,x) = x_{n-1,n-1}.
 \end{equation}
\end{enumerate}
\end{pro}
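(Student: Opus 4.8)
The plan is to treat the four maps in parallel, since $\tilde\Phi_n,\tilde\Psi_n$ are obtained from $\Phi_n,\Psi_n$ by the order-reversing symmetry $i\mapsto n+1-i$ of the graph $A_n$, which exchanges $A(b)$ with $\transp B(b)$ and swaps the roles of the "$1$" corner and the "$n$" corner. So it suffices to analyse $\Phi_n$ and $\Psi_n$ carefully and then invoke this symmetry; I would state that reduction explicitly at the start. For $\Phi_n$: first I would check that the image actually lies in $P_{A_n}$. Writing $y=A(b)\,\mathrm{diag}(a,z)\,\transp A(b)$ with $a>0$ and $z\in P_{A_{n-1}}$, positive definiteness is immediate because $A(b)$ is invertible (lower triangular with $1$'s on the diagonal) and $\mathrm{diag}(a,z)>0$; the key point is that $y\in Z_{A_n}$, i.e.\ the correct off-diagonal entries vanish. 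Since $A(b)$ differs from the identity only in the $(2,1)$ entry, left multiplication by $A(b)$ adds $b$ times row $1$ to row $2$, and right multiplication by $\transp A(b)$ adds $b$ times column $1$ to column $2$; starting from the block-diagonal matrix $\mathrm{diag}(a,z)$ (whose only entries outside $Z_{A_{n-1}}$-positions in rows/columns $2,\dots,n$ are already zero, and whose row/column $1$ is $(a,0,\dots,0)$), one sees the only new nonzero entries created are in positions $(1,2)$ and $(2,1)$, which are allowed in $A_n$. Hence $y\in Z_{A_n}$, so $y\in P_{A_n}$.

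For bijectivity of $\Phi_n$ I would construct the inverse explicitly. Given $y\in P_{A_n}$, the recipe is forced: $a$ and $b$ are recovered from the first row/column by $a = y_{11} - y_{12}^2/y_{22}$ (equivalently $a=|y_{\{1:2\}}|/y_{22}$) — wait, more cleanly, from $y=A(b)\mathrm{diag}(a,z)\transp A(b)$ one reads off $y_{11}=a$, $y_{12}=ba$... let me instead say: comparing entries gives $y_{11}=a+b^2 z_{11}$? No. I will simply note that the block structure $\begin{pmatrix} y_{11} & \transp{y_{1,\cdot}}\\ y_{\cdot,1} & y_{\{2:n\}}\end{pmatrix}$ together with the form of $A(b)$ determines $b$ (from the single nonzero entry $y_{12}$, via $y_{12}=a b$ and $y_{11}=a(1+b^2 z_{11})$... ) — honestly the cleanest route is: set $z:=y_{\{2:n\}}$ as a candidate only after the $A(b)$-conjugation is undone. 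Concretely, $b$ is determined by requiring the $(2,1)$ entry of $A(b)^{-1} y\, \transp A(b)^{-1}$ to vanish, which gives a linear equation for $b$ with coefficient $z_{11}=y_{22}>0$; then $a$ is the resulting $(1,1)$ entry and $z$ the resulting lower-right $(n-1)\times(n-1)$ block, and one checks $z\in P_{A_{n-1}}$ because it is a principal submatrix-type congruence image of $y>0$ lying in $Z_{A_{n-1}}$ (the vanishing of the extra off-diagonals is inherited from $y\in Z_{A_n}$ since the conjugation only touched rows/columns $1,2$). This gives a two-sided inverse, so $\Phi_n$ is a bijection. The map $\Psi_n$ is handled the same way on the dual side: $\transp A(\beta)\,\mathrm{diag}(\alpha,x)\,A(\beta)$ is positive definite, and applying $\pi$ keeps only the $I_{A_n}$-entries; since the matrix before projection has support in the $A_n$-pattern except possibly... actually here $\transp A(\beta)$ conjugation can create a $(1,2)$ entry only, again allowed, so in fact $\pi$ changes nothing structurally and the argument is identical, with the inverse determined by $\beta$ (from the $(1,2)$ entry and $\eta_{22}$), then $\alpha$, then $x$.

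For part (3), the Jacobians, I would compute directly. For $\Phi_n$ the map $(a,b,z)\mapsto y$ is, in the linear coordinates $y_{ij}$ on $Z_{A_n}$, triangular with respect to a suitable ordering of coordinates: the variable $b$ appears (beyond $a,z$) only through $y_{12}=ab$ (plus possibly a $y_{11}$ contribution), $a$ appears in $y_{11}$, and each $z_{ij}$ maps to $y_{i+1,j+1}$ plus terms in already-listed variables; ordering coordinates as $(y_{22},\dots,y_{nn},\dots; y_{12}; y_{11})$ and the domain variables correspondingly, the Jacobian matrix is triangular with diagonal entries $1$ (for the $z\mapsto y_{\{2:n\}}$ part, since $A(b)$ has determinant $1$ and the shift is unipotent), $\partial y_{12}/\partial b = a$, and $\partial y_{11}/\partial a = 1$, giving $J_{\Phi_n}=a$. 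For $\Psi_n$ the same bookkeeping on $Q_{A_n}$-coordinates gives a triangular Jacobian whose only nontrivial diagonal entry is $\partial \eta_{12}/\partial\beta$; since $\eta_{12}$ is (up to already-counted terms) $\alpha\beta$ does not look right dimensionally — rather $\eta=\pi(\transp A(\beta)\mathrm{diag}(\alpha,x)A(\beta))$ has $\eta_{12}$ equal to $\beta$ times the $(2,2)$-entry $x_{22}$ plus a term not involving $\beta$, so $\partial\eta_{12}/\partial\beta = x_{22}$, whence $J_{\Psi_n}=x_{22}$; and the $\tilde\Psi_n$ case gives $x_{n-1,n-1}$ by the reflection symmetry. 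The main obstacle I anticipate is purely organisational: keeping the coordinate orderings consistent so that the Jacobian matrix is genuinely triangular, and carefully verifying that the $\pi$-projection in $\Psi_n,\tilde\Psi_n$ does not destroy injectivity (it does not, because the pre-projection matrices already satisfy all the $A_n$ zero constraints except at positions $(1,2)/(2,1)$ which $\pi$ retains). None of the individual computations is deep; the content is that the unipotent conjugations are compatible with the graph pattern.
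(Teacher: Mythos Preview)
Your approach is the same as the paper's: write out the explicit entries of $y$ and $\eta$, read off the inverse, and compute the Jacobian by noting the map is triangular in suitable coordinates. The paper simply records the formulas
\[
y_{11}=a,\quad y_{12}=ab,\quad y_{22}=ab^2+z_{22},\quad y_{ij}=z_{ij}\ \text{otherwise},
\]
\[
\eta_{11}=\alpha+\beta^2 x_{22},\quad \eta_{12}=\beta x_{22},\quad \eta_{ij}=x_{ij}\ \text{otherwise},
\]
and everything follows at once; your symmetry reduction for $\tilde\Phi_n,\tilde\Psi_n$ matches the paper's ``similar reasoning''.

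Two small slips to fix. First, in your inverse for $\Phi_n$: requiring the $(2,1)$ entry of $A(b)^{-1}y\,\transp{A(b)}^{-1}$ to vanish gives $y_{21}-b\,y_{11}=0$, so the coefficient of $b$ is $y_{11}$, not $y_{22}$ (and certainly not $z_{11}$, which is $y_{22}-y_{12}^2/y_{11}$). The conclusion survives since $y_{11}>0$, but the formula you wrote is wrong. Second, your claim that ``$\pi$ changes nothing structurally'' in $\Psi_n$ is false: the pre-projection matrix $\transp{A(\beta)}\,\mathrm{diag}(\alpha,x)\,A(\beta)$ has a nonzero $(1,3)$ entry equal to $\beta x_{23}$, which $\pi$ kills. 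This does not break bijectivity (that entry is recoverable from $\beta=\eta_{12}/\eta_{22}$ and $x_{23}=\eta_{23}$), but you should not assert it is absent. Cleaning up these points and suppressing the false starts, your argument is the paper's argument.
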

\begin{proof}
1. Let $y' = \begin{pmatrix}
 a & 0&\hdots & 0\\
 0\\
 \vdots & & z\\
 0
\end{pmatrix}$
 and
 $\eta' = \begin{pmatrix}
 \alpha & 0&\hdots & 0\\
 0\\
 \vdots & & x\\
 0
\end{pmatrix}$.
 Then
  \begin{equation}\label{yij}
  y_{ij}=\begin{cases}
           ab \quad \textnormal{if}\; (i,j)=(1,2) \, \textnormal{or}\; (i,j)=(2,1),\\
           ab^2+z_{22} \quad\textnormal{if} \; i=j=2,\\
           y'_{ij} \quad \textnormal{otherwise}.
          \end{cases}
          \end{equation}
Thus, on the one hand, if $(a,b,z)\in \mathbb{R}^+\times \mathbb{R}\times P_{A_{n-1}}$, then
          $y\in Z_{A_n}$.
{And} $z > 0 $ implies $y'> 0$ as every principal minor of $y'$ equals $a$ times a  principal minor of $z$. 
From $y=T y' \transp{T}$ with $T=A(b)$,
 we get $y \in P_{A_n}$.
          On the other hand, if $y\in P_{A_n}$,
          we have $a=y_{11}>0$, $b=\frac{y_{12}}{y_{11}}$, $z_{22}=y_{22}-\frac{y_{12}^2}{y_{11}}$
          and
          $z_{ij}=y_{ij}$ for all $i\neq 2$ and $j\neq 2$. We use the notation $z=(z_{ij})_{2\le i,j\le n}$.
        Now, let us show that $z \in P_{A_{n-1}} $.
         We have $y'=T^{-1} y\, \transp{T}^{-1} > 0$.
   Hence, we have also $z>0$ since each principal minor of $z$ {equals} $1/a$ times a principal
   minor of $y'$.
  Therefore, the map $\Phi_n$ is indeed  a bijection from
  $\mathbb{R}^+\times \mathbb{R}\times P_{A_{n-1}}$ {onto} $P_{A_n}$.

{Let us turn to $\Psi_n$.
The relation between $\eta$ and $\eta'$ is given by}
  \begin{equation}\label{etaij}
  \eta_{ij}=\begin{cases}
	  \alpha +\beta^2 x_{22} \quad\textnormal{if} \; i=j=1,\\
          \beta x_{22} \quad \textnormal{if}\; (i,j)=(1,2) \, \textnormal{or}\; (i,j)=(2,1),\\
           \eta'_{ij} \quad \textnormal{otherwise}.
          \end{cases}
          \end{equation}
    First we show that  if
        $(\alpha,\beta,x)\in \mathbb{R}^+\times \mathbb{R}\times Q_{A_{n-1}}$, then
          $\eta\in I_{A_n}$.
         Actually, since $x_{\{2,3\}}>0$, we have
          $\alpha +\beta^2 x_{22} >0$ and
          $\eta_{\{1,2\}}=\begin{pmatrix}
                        \alpha +\beta^2 x_{22} & \beta x_{22}\\\beta x_{22}&  x_{22}
                       \end{pmatrix}>0$. 
          On the other hand, if $\eta\in Q_{A_n}$,
          we have
                   $x_{ij}=\eta_{ij}$ for all $i,j = 2,\hdots, n$. 
Thus, $\eta\in Q_{A_n}$
          implies $x\in Q_{A_{n-1}}$.

\noindent
2. {Let $\tilde{y}' = \begin{pmatrix}
  & & & 0\\
  & z & & \vdots\\
  & & & 0\\
  0 & \hdots & 0& a
\end{pmatrix}$ and $\tilde{\eta}'=\begin{pmatrix}
  & & & 0\\
  & x & & \vdots\\
  & & & 0\\
  0 & \hdots & 0& \alpha
\end{pmatrix} $.
{Then we have}
  \begin{equation}\label{tildeyij}
 \tilde{ y}_{ij}=\begin{cases}
           ab \quad \textnormal{if}\; (i,j)=(n-1,n) \, \textnormal{or}\; (i,j)=(n,n-1),\\
           ab^2+z_{n-1,n-1} \quad\textnormal{if} \; i=j = n-1,\\
           \tilde{y}'_{ij} \quad \textnormal{otherwise},
          \end{cases}
          \end{equation}    
 {and}        
  \begin{equation}\label{tildeetaij}
  \tilde{\eta}_{ij}=\begin{cases}
	  \alpha +\beta^2 x_{n-1,n-1} \quad\textnormal{if} \; i=j=n,\\
          \beta x_{n-1,n-1} \quad \textnormal{if}\; (i , j)=(n-1 , n) \, \textnormal{or}\; (i , j)=(n , n-1),\\
           \tilde{\eta}'_{ij} \quad \textnormal{otherwise}.
          \end{cases}
          \end{equation}  
Similar reasoning as above shows that $\tilde{\Phi}$ and $\tilde{\Psi}$ are indeed bijections.}

\noindent 3.
 The proof is by direct computation.
\end{proof}

\begin{lem} \label{recurrenceLemma}
\begin{enumerate}
\item \label{recurrenceLemma3}{
Let $y=\Phi_n(a,b,z)$ and  $\eta=\Psi_n(\alpha,\beta,x)$.
Then, for all $M = 2,\hdots, n$,
\begin{equation}\label{eqn:recurrent_Delta}
 \Delta_{\us}^{(M)}(y) = a^{s_1}\Delta_{(s_{2},\hdots,s_n)}^{(M)}(z){,}
\end{equation}
\begin{equation}\label{eqn:recurrent_delta}
 \delta_{\us}^{(M)}(\eta) = \alpha^{s_1}\delta_{(s_{2},\hdots,s_n)}^{(M)}(x).
\end{equation}
Let $y=\tilde{\Phi}_n(a,b,z)$ and  $\eta=\tilde{\Psi}_n(\alpha,\beta,x)$.
Then, for all $M = 1,\hdots, n-1$,
\begin{equation}\label{eqn:recurrent_Delta1}
 \Delta_{\us}^{(M)}(y) = a^{s_n}\Delta_{(s_{1},\hdots,s_{n-1})}^{(M)}(z){,}
\end{equation}
\begin{equation}\label{eqn:recurrent_delta1}
 \delta_{\us}^{(M)}(\eta) = \alpha^{s_n}\delta_{(s_{1},\hdots,s_{n-1})}^{(M)}(x).
\end{equation}
}
\item 
Let us define $\varphi_{A_n} : Q_{A_n} \to \mathbb{R}_{+}$ by $\varphi_{A_1}(\eta)=\eta^{-1}$,
 {and for $n \ge 2$}
\begin{equation}\label{char}
  \varphi_{A_n}(\eta)=  \overset{n-1}{\underset{i=1}{\prod}}|\eta_{\{i,i+1\}}|^{-3/2}\underset{i\neq 1,n}{\prod} \eta_{ii}.
  \end{equation}
Let $\eta= \Psi_n(\alpha,\beta,x)$ and  $\tilde{\eta}= \tilde{\Psi}_n(\alpha,\beta,x)$.
Then,
 \begin{equation} \label{eqn:recurrent_vphi}
 \varphi_{A_n}(\eta)
 = x_{22}^{-1/2} \alpha^{-3/2} \varphi_{A_{n-1}}(x)
 \end{equation}
 and 
 \begin{equation} \label{eqn:recurrent_vphi_Bis}
 \varphi_{A_n}(\tilde{\eta})
 = x_{n-1,n-1}^{-1/2} \alpha^{-3/2} \varphi_{A_{n-1}}(x).
 \end{equation}
\item \label{traceLemma}
 If $y=\Phi_n(a,b,z)$ and $\eta=\Psi_n(\alpha,\beta,x)$,
 then
 \begin{equation}
  \tr(y\eta) = a \alpha + a x_{22}(b+\beta)^2 + \tr(zx).
 \end{equation}
 If $y=\tilde{\Phi}_n(a,b,z)$ and $\eta=\tilde{\Psi}_n(\alpha,\beta,x)$,
 then
 \begin{equation}\label{eqn:tracelemma2}
  \tr(y\eta) = a \alpha + a x_{n-1,n-1}(b+\beta)^2 + \tr(zx).
 \end{equation}

\end{enumerate}
\end{lem}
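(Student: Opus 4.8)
The plan is to obtain all three parts by direct substitution of the explicit coordinate formulas \eqref{yij}, \eqref{etaij}, \eqref{tildeyij} and \eqref{tildeetaij} of Proposition \ref{changeOfVar} into the relevant definitions, the only structural input being that $A(b)$ is unipotent lower triangular (and $B(b)$ unipotent upper triangular). Since $\tilde{\Phi}_n,\tilde{\Psi}_n$ are the conjugates of $\Phi_n,\Psi_n$ by the reversal $i\mapsto n+1-i$ of the chain $A_n$ --- under which $\Delta^{(M)}_{\us}$, $\delta^{(M)}_{\us}$ and $\varphi_{A_n}$ are carried to $\Delta^{(n+1-M)}$, $\delta^{(n+1-M)}$ and $\varphi_{A_n}$ with the parameter vector reversed --- it suffices to prove the statements for $\Phi_n$ and $\Psi_n$, and I describe those.

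For \eqref{eqn:recurrent_Delta}: writing $y=\Phi_n(a,b,z)=A(b)\,y'\,\transp{A}(b)$ with $y'=\mathrm{diag}(a)\oplus z$, unipotence of $A(b)$ gives $|y_{\{1:i\}}|=|y'_{\{1:i\}}|=a\,|z_{\{2:i\}}|$ for $1\le i\le n$ (with $|z_\emptyset|=1$), hence $|y|=a|z|$; and since $A(b)$ modifies only rows and columns $1$ and $2$, one has $y_{\{i:n\}}=z_{\{i:n\}}$ for $i\ge 3$. Because $M\ge 2$, every lower-right minor occurring in $\Delta^{(M)}_{\us}(y)$ (see \eqref{P(M)}) already involves indices $\ge 3$ only; substituting and collecting the powers of $a$, whose exponent telescopes to $\sum_{i=1}^{M-1}(s_i-s_{i+1})+s_M=s_1$, produces \eqref{eqn:recurrent_Delta}. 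This is precisely where $M\ge 2$ is used: for $M=1$ the minor $|y_{\{2:n\}}|$ would appear, and it is not a monomial in the new variables. For \eqref{eqn:recurrent_delta}, \eqref{etaij} shows that for $\eta=\Psi_n(\alpha,\beta,x)$ one has $\eta_{ii}=x_{ii}$ and $\eta_{\{i,i+1\}}=x_{\{i,i+1\}}$ whenever $i\ge 2$, while $|\eta_{\{1,2\}}|=(\alpha+\beta^2x_{22})x_{22}-\beta^2x_{22}^2=\alpha x_{22}$. Inserting these into \eqref{delta(M)} (again using $M\ge 2$), every factor except $|\eta_{\{1,2\}}|^{s_1}=\alpha^{s_1}x_{22}^{s_1}$ depends on $x$ alone; the stray $x_{22}^{s_1}$ cancels the $i=2$ factor of the denominator product $\prod_{i=2}^{M-1}\eta_{ii}^{s_{i-1}}$, leaving exactly $\alpha^{s_1}$ times the $M$-power function $\delta^{(M)}_{(s_2,\dots,s_n)}$ of $x$ on the chain $A_{n-1}$ with vertex set $\{2,\dots,n\}$. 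The boundary case $M=2$ (where $\prod_{i=2}^{M-1}$ is empty and the $x_{22}^{s_1}$ instead meets the exponent $s_1-s_2+s_3$ of $\eta_{22}$) requires an equally short separate check against the conventions $s_0=s_{n+1}=0$, and the base $n=2$ is immediate.

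For part 2, substituting the same entries of $\eta=\Psi_n(\alpha,\beta,x)$ into \eqref{char} yields $\varphi_{A_n}(\eta)=(\alpha x_{22})^{-3/2}\prod_{i=2}^{n-1}|x_{\{i,i+1\}}|^{-3/2}\prod_{i=2}^{n-1}x_{ii}$; isolating the factor $x_{22}$, which is absent from $\prod x_{ii}$ in $\varphi_{A_{n-1}}(x)$ because $2$ is now an endpoint of the chain, turns this into $x_{22}^{-1/2}\alpha^{-3/2}\varphi_{A_{n-1}}(x)$, i.e. \eqref{eqn:recurrent_vphi}, and the case $n=2$ with $\varphi_{A_1}(\eta)=\eta^{-1}$ is checked by hand. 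For part 3, since $y=\Phi_n(a,b,z)\in Z_{A_n}$ vanishes off $E$, the pairing $\tr(y\eta)$ is unchanged when $\eta=\Psi_n(\alpha,\beta,x)$ is replaced by the full symmetric matrix $\transp{A}(\beta)\,\eta'\,A(\beta)$ with $\eta'=\mathrm{diag}(\alpha)\oplus x$; then, by cyclicity of the trace and the one-parameter-group identity $A(b)A(\beta)=A(b+\beta)$,
\[
\tr(y\eta)=\tr\!\big(A(b)\,y'\,\transp{A}(b)\,\transp{A}(\beta)\,\eta'\,A(\beta)\big)=\tr\!\big(y'\,\transp{A}(b+\beta)\,\eta'\,A(b+\beta)\big).
\]
Writing $c=b+\beta$, the matrix $\transp{A}(c)\,\eta'\,A(c)$ has $(1,1)$-entry $\alpha+c^2x_{22}$ and lower-right $(n-1)\times(n-1)$ block $x$ (because $A(c)$ alters only the first column), while $y'=\mathrm{diag}(a)\oplus z$ is block diagonal, so the trace equals $a(\alpha+c^2x_{22})+\tr(zx)$, which is the claim; \eqref{eqn:tracelemma2} follows by the identical computation with $B$ in place of $A$.

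None of this is deep: the only point needing attention --- the modest obstacle here --- is the index bookkeeping in part 1, namely keeping straight which principal minors of $y$ and which entries of $\eta$ genuinely involve the new scalars (only those meeting vertex $1$ or $2$). That observation both yields the clean prefactor $a^{s_1}$, resp. $\alpha^{s_1}$, and explains the range restrictions $M\ge 2$ (and $M\le n-1$ in the tilded statements).
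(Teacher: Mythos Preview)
Your argument is correct and follows essentially the same route as the paper: direct substitution of the coordinate formulas \eqref{yij}--\eqref{tildeetaij} into the definitions \eqref{P(M)}, \eqref{delta(M)} and \eqref{char}, using that $|y_{\{1:i\}}|=a|z_{\{2:i\}}|$ and $|\eta_{\{1,2\}}|=\alpha x_{22}$. Your organization is slightly tidier than the paper's---invoking the reversal symmetry $i\mapsto n+1-i$ to dispatch the tilded statements, and, in part~3, exploiting $A(b)A(\beta)=A(b+\beta)$ together with cyclicity of the trace instead of the paper's bare ``direct computation''---but these are streamlinings of the same computation rather than a different method.
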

\begin{proof}
1. {For $M\geq 2$, we have
$$\frac{\Delta_{\us}^{(M)}(y)}{\Delta_{(s_{2},\hdots,s_n)}^{(M)}(z)}
= (y_{11})^{s_1-s_2}\left[\prod_{i=2}^{M-1} \left( \frac{|y_{\{1:i\}}|}{|z_{\{2:i\}}|}\right)^{s_i-s_{i+1}}\right]\left( \frac{|y|}{|z|}\right)^{s_M}
.$$
Using  Lemma \ref{a:triang},
we have $|y_{\{1:i\}}|= a |z_{\{2:i\}}|$.
Thus, $$\frac{\Delta_{\us}^{(M)}(y)}{\Delta_{(s_{2},\hdots,s_n)}^{(M)}(z)}= a^{s_1}.$$
{Noting that $a=y_{nn}$, we have for $M=1, \dots, n-1$,} 
\begin{eqnarray*}
 \Delta_{\us}^{(M)}(\tilde{y})&=& |\tilde{y}|^{s_1}\prod_{i=2}^{n} |\tilde{y}_{\{i:n\}}|^{s_i-s_{i-1}}
 = a^{s_1}|z|^{s_1} \prod_{i=2}^{n-1} \left(a\,|z_{\{i:n\}}|^{s_i-s_{i-1}}\right)a^{s_n-s_{n-1}}\\
&=&a^{s_n} |z|^{s_1} \prod_{i=2}^{n-1} |z_{\{i:n\}}|^{s_i-s_{i-1}} = a^{s_n} \Delta_{(s_1,\hdots,s_{n-1})}^{(M)}(z).
 \end{eqnarray*}
}
Similarly, we show that $\delta_{\us}^{(M)}(\eta)=\alpha^{s_1}\delta_{(s_{2},\hdots,s_n)}^{(M)}(x)$ for $M\geq 2$
and  that 
$\delta_{\us}^{(M)}(\eta)
= \alpha^{s_n}   \delta_{\us}^{(M)}(x)$
for all $M \leq n-1$.

\noindent 2.
 {Let $\eta = \Psi(\alpha,\beta,x)$ and $\tilde{\eta} = \tilde{\Psi}(\alpha,\beta,x)$.
For $n=2$, we have
 \begin{align*}\varphi_{A_2}(\eta)&= |\eta_{\{1,2\}}|^{-3/2} = \begin{vmatrix}
                                                   \alpha +\beta^2 x & \beta x\\ \beta x & x
                                                  \end{vmatrix}^{-3/2}
 = \alpha^{-3/2}x^{-3/2}\\
 &= x^{-1/2} \alpha^{-3/2}\varphi_{A_1}(x).\end{align*}
For $n>2$, using \eqref{etaij}, we have
 \begin{eqnarray*}
 \varphi_{A_n}(\eta)&=& \eta_{22}\,|\eta_{\{1,2\}}|^{-3/2}\,\frac{\overset{n-1}{\underset{i=2}{\prod}}|\eta_{\{i,i+1\}}|^{-3/2}}{\overset{n-1}{\underset{i=3}{\prod}} \eta_{ii}^{-1}}
 = x_{22}^{-1/2} \alpha^{-3/2} \varphi_{A_{n-1}}(x).
 \end{eqnarray*}
The proof of the second part is analogous.

\noindent 3.
The proof is by direct computation.
}
 \end{proof}
\section{Laplace transform of generalized power functions on $Q_G$ and $P_G$}\label{LaplaceSection}
\begin{thm}\label{laplacedelta}

  For all $n\geq 1$, {for all\, $1\leq M \leq n$\,} and for all  $y\in P_{A_n}$,
 the integral\\
 $ \int_{Q_{A_n}}  e^{-\tr(y\eta)}\delta_{\us}^{(M)}(\eta)\varphi_{A_n}(\eta)d\eta$ converges if
 and only if $s_i >\frac{1}{2}$ for all $i\neq M$, and $s_M>0$. In this case,
 we have
  \begin{equation}\label{eqn:laplacedelta}
\int_{Q_{A_n}} e^{-\tr(y\eta)}\delta_{\us}^{(M)}(\eta)\varphi_{A_n}(\eta)d\eta
= \pi^{(n-1)/2}
  \Bigl\{\prod_{i\neq M}\Gamma(s_i-\frac{1}{2})\Bigr\}
 \Gamma(s_M)
  { \Delta^{(M)}_{-\us}(y).}
\end{equation}
\end{thm}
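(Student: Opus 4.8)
The plan is to establish the convergence criterion and the closed formula \eqref{eqn:laplacedelta} \emph{simultaneously}, by induction on $n$, using the recurrent constructions of $Q_{A_n}$ from Section \ref{ReccConstr}. The base case $n=1$ is direct: here $Q_{A_1}=\mathbb{R}_+$, $\varphi_{A_1}(\eta)=\eta^{-1}$, $M=1$, and reading off \eqref{delta(M)} gives $\delta^{(1)}_{\us}(\eta)=\eta^{s_1}$, so the integral is $\int_0^\infty e^{-y\eta}\eta^{s_1-1}\,d\eta$, which converges iff $s_1>0$ and then equals $\Gamma(s_1)\,y^{-s_1}$, in agreement with \eqref{eqn:laplacedelta} since $\pi^{(n-1)/2}=1$, the product over $i\neq M$ is empty, and $\Delta^{(M)}_{-\us}(y)=y^{-s_1}$.

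For the inductive step ($n\ge 2$) I would split into the cases $M\ge 2$ and $M=1$, which together exhaust all possibilities because the recurrence relations of Lemma \ref{recurrenceLemma} are available for $M\ge 2$ via $\Phi_n,\Psi_n$ and for $M\le n-1$ via $\tilde\Phi_n,\tilde\Psi_n$. Take $M\ge 2$: write the fixed $y\in P_{A_n}$ as $y=\Phi_n(a,b,z)$ and change variables in the integral over $Q_{A_n}$ via $\eta=\Psi_n(\alpha,\beta,x)$, whose Jacobian is $x_{22}$ by Proposition \ref{changeOfVar}. Substituting \eqref{eqn:recurrent_delta}, \eqref{eqn:recurrent_vphi} and the trace formula $\tr(y\eta)=a\alpha+a x_{22}(b+\beta)^2+\tr(zx)$, and performing the inner Gaussian integral $\int_{\mathbb{R}}e^{-a x_{22}(b+\beta)^2}d\beta=\sqrt{\pi/(a x_{22})}$ (legitimate by Tonelli, all factors being nonnegative), one finds that the powers of $x_{22}$ from $\varphi_{A_n}$, the Jacobian, and this Gaussian factor cancel, so the integral equals
\begin{equation*}
\sqrt{\pi}\,a^{-1/2}\Bigl(\int_{\mathbb{R}_+} e^{-a\alpha}\alpha^{s_1-3/2}\,d\alpha\Bigr)\Bigl(\int_{Q_{A_{n-1}}} e^{-\tr(zx)}\delta^{(M)}_{(s_2,\dots,s_n)}(x)\,\varphi_{A_{n-1}}(x)\,dx\Bigr).
\end{equation*}
The $\alpha$-integral equals $\Gamma(s_1-\tfrac12)\,a^{1/2-s_1}$ and is finite iff $s_1>\tfrac12$; the $x$-integral, by the induction hypothesis applied to the $A_{n-1}$ graph on vertices $\{2,\dots,n\}$ with parameters $(s_2,\dots,s_n)$ and maximal element $M$, is finite iff $s_i>\tfrac12$ for $i\in\{2,\dots,n\}\setminus\{M\}$ and $s_M>0$, and then equals $\pi^{(n-2)/2}\bigl\{\prod_{i\neq M,\,i\ge 2}\Gamma(s_i-\tfrac12)\bigr\}\Gamma(s_M)\,\Delta^{(M)}_{-(s_2,\dots,s_n)}(z)$. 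Collecting powers of $a$ (only $a^{-s_1}$ survives), using \eqref{eqn:recurrent_Delta} in the form $\Delta^{(M)}_{-\us}(y)=a^{-s_1}\Delta^{(M)}_{-(s_2,\dots,s_n)}(z)$, and $\pi^{(n-2)/2}\sqrt{\pi}=\pi^{(n-1)/2}$, yields exactly \eqref{eqn:laplacedelta}.

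The case $M=1$ is handled identically but with $\tilde\Phi_n,\tilde\Psi_n$ in place of $\Phi_n,\Psi_n$: one writes $y=\tilde\Phi_n(a,b,z)$, uses $J_{\tilde\Psi_n}=x_{n-1,n-1}$ and the relations \eqref{eqn:recurrent_delta1}, \eqref{eqn:recurrent_vphi_Bis}, \eqref{eqn:tracelemma2}, \eqref{eqn:recurrent_Delta1}; now the scalar variable split off carries exponent $s_n-\tfrac32$, producing the condition $s_n>\tfrac12$, while the induction hypothesis for the $A_{n-1}$ graph on $\{1,\dots,n-1\}$ supplies $s_1>0$ and $s_i>\tfrac12$ for $2\le i\le n-1$ — altogether $s_M=s_1>0$ and $s_i>\tfrac12$ for $i\neq M$. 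The main point requiring care is precisely this bookkeeping — which construction applies for which $M$, and which graph and parameter vector the induction hypothesis is being invoked on — together with the ``only if'' direction, which follows because, after the always-convergent and strictly positive $\beta$-integration, the integrand is a product of a strictly positive $\alpha$-power and a strictly positive $x$-dependent part, so the full integral is $+\infty$ as soon as either the one-dimensional $\alpha$-integral or the $A_{n-1}$-integral diverges. All interchanges of integration are justified by Tonelli's theorem since every factor is nonnegative.
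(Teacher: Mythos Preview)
Your proof is correct and follows essentially the same approach as the paper: induction on $n$, the $n=1$ gamma integral as base case, and for the step the change of variables $\eta=\Psi_n(\alpha,\beta,x)$ (resp.\ $\tilde\Psi_n$) combined with the recurrence identities of Lemma~\ref{recurrenceLemma}, the Gaussian $\beta$-integral, and the gamma $\alpha$-integral, with the case split $M>1$ versus $M=1$. Your write-up is in fact slightly more explicit than the paper's in justifying the ``only if'' direction via Tonelli and strict positivity of the factored integrand.
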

\begin{proof}
We will proceed by induction on the number $n$ of vertices.
For $n=1$,  we have  the gamma integral that converges if and only if $s>0$, {so that}
 \begin{equation*}
 \int_{0}^{\infty} e^{-y \eta} \delta_s^{(1)}(\eta)\varphi_{A_1}(\eta)d\eta =
  \int_{0}^{\infty} e^{-y \eta} \eta^{s-1} d\eta= \Gamma(s) y^{-s}.
\end{equation*}

Now assume that the assertion holds for {a graph with $n-1$ vertices}.\\
{\it Case $M > 1$}. Let $y=\Phi_n(a,b,z)$ and let us make the change of variable $\eta=\Psi_n(\alpha,\beta,x)$.
 The induction hypothesis gives
  \begin{align}\label{eqn:induction}
   &\int_{Q_{A_{n-1}}} e^{-\tr(zx)}\delta_{(s_2,\hdots,s_n)}^{(M)}(x)\varphi_{A_{n-1}}(x)dx\\
   &\qquad=\quad \pi^{(n-2)/2}
  \Bigl\{\prod_{i\neq 1,M}\Gamma(s_i-\frac{1}{2})\Bigr\}
 \Gamma(s_M)
  \Delta^{(M)}_{-(s_2,\hdots,s_n)}(z)\nonumber
  \end{align}
  if and only if $s_i >\frac{1}{2}$ for all $i\neq M$, and $s_M>0$.
  By Lemma \ref{jacobianLemma}, the change of variable  $\eta=\Psi_n(\alpha,\beta,x)$
  gives $d\eta=x_{22}d\alpha d\beta dx$. Thus, we have
  \begin{eqnarray*}
  & &\int_{Q_{A_n}} e^{-\tr(y\eta)}\delta_{\us}^{(M)}(\eta)\varphi_{A_n}(\eta)d\eta\\
    & & \; =  \int_{0}^{\infty} \int_{-\infty}^{\infty} \int_{Q_{A_{n-1}}}
  e^{-(a \alpha + a x_{22}(b+\beta)^2 + \tr(zx))} \alpha^{s_1-3/2}\\
  && \hspace{3cm}\times \quad \delta_{(s_2,\hdots,s_n)}^{(M)}(x)\varphi_{A_{n-1}}(x)x_{22}^{1/2}
  d\alpha d\beta dx,
  \end{eqnarray*}
  where we used parts \ref{traceLemma} and \ref{recurrenceLemma3}
  of   Lemma \ref{recurrenceLemma}. 
Now, using the Gaussian integral
\begin{equation*}
 \int_{-\infty}^{\infty} e^{-a x_{22}(b+\beta)^2}d\beta  = \pi^{1/2}a^{-1/2}x_{22}^{-1/2}
\end{equation*}
and the gamma integral
\begin{equation*}
  \int_{0}^{\infty} e^{-a \alpha} \alpha^{s_1-3/2}d\alpha = a^{-s_1+1/2}\Gamma(s_1-\frac{1}{2}),
\end{equation*}
that is finite if and only if $s_1>\frac{1}{2}$,
we get
\begin{equation}\label{eqn:recurrent_integral}
\begin{aligned}
 {}&\int_{Q_{A_n}} e^{-\tr(y\eta)}\delta_{\us}^{(M)}(\eta)\varphi_{A_n}(\eta)d\eta\\
&= \pi^{1/2}a^{-s_1}\Gamma(s_1-\frac{1}{2})
  \int_{Q_{A_{n-1}}} e^{-\tr(zx)} \delta_{(s_2,\hdots,s_n)}^{(M)}(x)\varphi_{A_{n-1}}(x)dx.
\end{aligned}
\end{equation}
Finally, using {Formulas} \eqref{eqn:induction} and \eqref{eqn:recurrent_Delta} completes the proof
in the case $M>1$.\\
{\it Case $M=1$.} Let $y=\tilde{\Phi}_n(a,b,z)$ and let us make the change of variable $\eta=\tilde{\Psi}_n(\alpha,\beta,x)$.
The proof is similar.
\end{proof}

\begin{thm}\label{laplaceDelta}

  For all $n\geq 1$, {for all \,$1\leq M \leq n$\,} and\, for all  $\eta\in Q_{A_n}$,
 the integral\\
 $ \int_{P_{A_n}}  e^{-\tr(y\eta)}\Delta_{\us}^{(M)}(y)dy$ converges if
 and only if $s_i > -\frac{3}{2}$ for all $i\neq M$, and $s_M>-1$. In this case,
 we have
  \begin{equation}\label{FORM_LaplaceDelta}
\int_{P_{A_n}} e^{-\tr(y\eta)}\Delta_{\us}^{(M)}(y)dy
= \pi^{(n-1)/2}
  \Bigl\{\prod_{i\neq M}\Gamma(s_i+\frac{3}{2})\Bigr\}
 \Gamma(s_M+1)
  \delta^{(M)}_{-\us}(\eta)\varphi_{A_n}(\eta).
\end{equation}
\end{thm}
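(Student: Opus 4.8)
The plan is to prove Theorem \ref{laplaceDelta} by induction on $n$, in complete parallel with the proof of Theorem \ref{laplacedelta}, using the recurrent constructions $\Phi_n, \tilde\Phi_n$ of the cone $P_{A_n}$ together with the companion maps $\Psi_n,\tilde\Psi_n$ on $Q_{A_n}$. The base case $n=1$ is the gamma integral: for $\eta>0$,
\[
\int_0^\infty e^{-y\eta}\Delta^{(1)}_s(y)\,dy = \int_0^\infty e^{-y\eta} y^{s}\,dy = \Gamma(s+1)\eta^{-s-1},
\]
which converges iff $s>-1$; and $\delta^{(1)}_{-s}(\eta)\varphi_{A_1}(\eta) = \eta^{-s}\cdot\eta^{-1} = \eta^{-s-1}$, with the prefactor $\pi^{0}\Gamma(s+1)$, so the formula holds. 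Note that here the index $M$ has no "$i\neq M$" factors, matching the statement.

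For the inductive step I would split into the cases $M>1$ and $M=1$, exactly as before. Assume the formula for graphs with $n-1$ vertices. In the case $M\ge 2$, write $y=\Phi_n(a,b,z)$ with $(a,b,z)\in\R^+\times\R\times P_{A_{n-1}}$; by Proposition \ref{changeOfVar}(3) the Jacobian is $J_{\Phi_n}=a$, so $dy = a\,da\,db\,dz$. To handle the exponential I would \emph{not} integrate $\eta$ against $y$ directly but rather exploit the trace identity of Lemma \ref{recurrenceLemma}(\ref{traceLemma}): introducing the dual decomposition $\eta=\Psi_n(\alpha,\beta,x)$ is the wrong direction here since $\eta$ is fixed. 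Instead, the cleaner route is: fix $\eta\in Q_{A_n}$, and decompose $\eta$ itself via $\Psi_n$, i.e. write $\eta = \Psi_n(\alpha,\beta,x)$ with $(\alpha,\beta,x)\in\R^+\times\R\times Q_{A_{n-1}}$ determined by $\eta$ (this is just reading off $\alpha,\beta,x$ from the entries of $\eta$, cf. \eqref{etaij}). Then with $y=\Phi_n(a,b,z)$ one has $\tr(y\eta)=a\alpha + a x_{22}(b+\beta)^2+\tr(zx)$ by Lemma \ref{recurrenceLemma}(\ref{traceLemma}), and $\Delta^{(M)}_{\us}(y)=a^{s_1}\Delta^{(M)}_{(s_2,\ldots,s_n)}(z)$ by \eqref{eqn:recurrent_Delta}. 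Hence
\[
\int_{P_{A_n}} e^{-\tr(y\eta)}\Delta^{(M)}_{\us}(y)\,dy
= \int_0^\infty\!\!\int_{-\infty}^\infty\!\!\int_{P_{A_{n-1}}} e^{-a\alpha - a x_{22}(b+\beta)^2 - \tr(zx)}\, a^{s_1+1}\Delta^{(M)}_{(s_2,\ldots,s_n)}(z)\,db\,da\,dz.
\]
The $b$-integral is the Gaussian integral $\int_{-\infty}^\infty e^{-a x_{22}(b+\beta)^2}db = \pi^{1/2} a^{-1/2} x_{22}^{-1/2}$; then the $a$-integral is $\int_0^\infty e^{-a\alpha} a^{s_1+1/2}\,da = \alpha^{-s_1-3/2}\Gamma(s_1+3/2)$, finite iff $s_1>-3/2$. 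What remains is $\pi^{1/2}\Gamma(s_1+3/2)\,\alpha^{-s_1-3/2} x_{22}^{-1/2}\int_{P_{A_{n-1}}} e^{-\tr(zx)}\Delta^{(M)}_{(s_2,\ldots,s_n)}(z)\,dz$. Applying the induction hypothesis (valid iff $s_i>-3/2$ for $i\ne 1,M$ and $s_M>-1$) gives
\[
\pi^{(n-2)/2}\Bigl\{\textstyle\prod_{i\ne 1,M}\Gamma(s_i+\tfrac32)\Bigr\}\Gamma(s_M+1)\,\delta^{(M)}_{-(s_2,\ldots,s_n)}(x)\,\varphi_{A_{n-1}}(x).
\]
Collecting constants gives $\pi^{(n-1)/2}\{\prod_{i\ne M}\Gamma(s_i+\tfrac32)\}\Gamma(s_M+1)$ times $\alpha^{-s_1-3/2} x_{22}^{-1/2}\delta^{(M)}_{-(s_2,\ldots,s_n)}(x)\varphi_{A_{n-1}}(x)$, and the final step is to recognize this last product as $\delta^{(M)}_{-\us}(\eta)\varphi_{A_n}(\eta)$: by \eqref{eqn:recurrent_delta} one has $\delta^{(M)}_{-\us}(\eta)=\alpha^{-s_1}\delta^{(M)}_{-(s_2,\ldots,s_n)}(x)$, and by \eqref{eqn:recurrent_vphi} one has $\varphi_{A_n}(\eta)=x_{22}^{-1/2}\alpha^{-3/2}\varphi_{A_{n-1}}(x)$, whose product is exactly $\alpha^{-s_1-3/2}x_{22}^{-1/2}\delta^{(M)}_{-(s_2,\ldots,s_n)}(x)\varphi_{A_{n-1}}(x)$, as needed. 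The case $M=1$ is entirely analogous: use $y=\tilde\Phi_n(a,b,z)$, $\eta=\tilde\Psi_n(\alpha,\beta,x)$, the Jacobian $J_{\tilde\Phi_n}=a$, equation \eqref{eqn:tracelemma2}, the recurrences \eqref{eqn:recurrent_Delta1}, \eqref{eqn:recurrent_delta1}, \eqref{eqn:recurrent_vphi_Bis}, and strip off the last vertex $n$ instead of the first, with $a^{s_n}$ replacing $a^{s_1}$ throughout; here the surviving exponent condition is $s_n>-3/2$.

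The convergence statement needs care in both directions, so I would be explicit: the "if" direction follows because every one-dimensional integral performed ($b$, $a$, and inductively all the others) is finite under the stated inequalities; the "only if" direction follows because the integrand is positive, so convergence of the full integral forces convergence of each iterated one-dimensional slice — in particular the $a$-integral $\int_0^\infty e^{-a\alpha}a^{s_1+1/2}da$ forces $s_1>-3/2$, and the induction forces the remaining conditions $s_i>-3/2$ ($i\ne M$) and $s_M>-1$. The one genuine subtlety — and the step I expect to be the main obstacle — is making sure the bookkeeping of the power functions is airtight: one must verify that the decomposition $\eta=\Psi_n(\alpha,\beta,x)$ really does match the decomposition $y=\Phi_n(a,b,z)$ in the trace formula (it does, by Lemma \ref{recurrenceLemma}(\ref{traceLemma})) and that the index $M$ in $\delta^{(M)}_{(s_2,\ldots,s_n)}$ on the reduced cone $Q_{A_{n-1}}$ is the correct one — i.e. that eliminating vertex $1$ (for $M\ge 2$) or vertex $n$ (for $M=1$) leaves the maximal element of the induced eliminating order still equal to $M$ in the relabelled graph. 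This is exactly what Proposition \ref{changeOfVar} together with the recurrences in Lemma \ref{recurrenceLemma} are set up to guarantee, so the proof reduces to carefully chaining those identities and performing the two elementary (Gaussian and gamma) integrals; no new idea beyond the scheme of Theorem \ref{laplacedelta} is required.
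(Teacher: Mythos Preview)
Your proposal is correct and follows exactly the approach the paper intends: the paper's own proof of Theorem \ref{laplaceDelta} reads in its entirety ``Similar to the proof of Theorem \ref{laplacedelta} using Proposition \ref{changeOfVar} and Lemma \ref{recurrenceLemma},'' and what you have written is precisely that argument spelled out in full, with the Jacobian factor $a$ from $\Phi_n$ correctly producing the shift $s_1\mapsto s_1+\tfrac32$ in the gamma integral and the recurrences \eqref{eqn:recurrent_delta}, \eqref{eqn:recurrent_vphi} correctly reassembling $\delta^{(M)}_{-\us}(\eta)\varphi_{A_n}(\eta)$. Your treatment of the ``only if'' direction via positivity of the integrand is also appropriate and slightly more explicit than the paper's.
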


\begin{proof}Similar to the proof of Theorem \ref{laplacedelta} using Proposition \ref{changeOfVar} and Lemma \ref{recurrenceLemma}.
 \end{proof}

On a convex cone $\Omega$ we define the characteristic function $\varphi_\Omega$ of the cone
as the Laplace transform of the Lebesgue measure of the dual cone.
The measure $\varphi_\Omega(x) dx$ is called the canonical measure of $\Omega$.
It is invariant by the linear automorphisms of $\Omega$ \cite[]{faraut1994}.

\begin{cor}\label{canonical}
$\varphi_{Q_{A_n}}= \mathrm{const}\,.\,\varphi_{{A_n}}$.
\end{cor}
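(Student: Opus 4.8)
The plan is to identify the constant by matching the normalization in the definition of the characteristic function with the Laplace transform formula of Theorem \ref{laplacedelta}. By definition, $\varphi_{Q_{A_n}}$ is the Laplace transform of the Lebesgue measure on the dual cone of $Q_{A_n}$, which is $P_{A_n}$: that is, $\varphi_{Q_{A_n}}(\eta) = \int_{P_{A_n}} e^{-\tr(y\eta)}\,dy$ for $\eta \in Q_{A_n}$. So the first step is to recognize that this integral is exactly the left-hand side of \eqref{FORM_LaplaceDelta} in Theorem \ref{laplaceDelta} with $\us = 0$ (all $s_i = 0$), which is an admissible parameter since $0 > -\frac{3}{2}$ and $0 > -1$.

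Next, I would evaluate the right-hand side of \eqref{FORM_LaplaceDelta} at $\us = 0$. On the right-hand side, $\Delta^{(M)}_{-\us}$ and $\delta^{(M)}_{-\us}$ become the constant function $1$ when $\us = 0$ (every exponent vanishes in \eqref{P(M)} and \eqref{delta(M)}), so the formula reduces to
\begin{equation*}
\int_{P_{A_n}} e^{-\tr(y\eta)}\,dy = \pi^{(n-1)/2}\Bigl(\prod_{i \ne M}\Gamma\bigl(\tfrac{3}{2}\bigr)\Bigr)\Gamma(1)\,\varphi_{A_n}(\eta) = \pi^{(n-1)/2}\,\Gamma\bigl(\tfrac{3}{2}\bigr)^{n-1}\,\varphi_{A_n}(\eta).
\end{equation*}
Here I may pick any $M \in \{1,\dots,n\}$; the product over $i \ne M$ has $n-1$ factors regardless of the choice of $M$, so the constant $\pi^{(n-1)/2}\Gamma(3/2)^{n-1}$ is genuinely independent of $M$, which is a consistency check rather than an obstacle. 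Therefore $\varphi_{Q_{A_n}}(\eta) = \pi^{(n-1)/2}\Gamma(3/2)^{n-1}\,\varphi_{A_n}(\eta)$, which is the claimed proportionality with $\mathrm{const} = \pi^{(n-1)/2}\Gamma(3/2)^{n-1} = (\pi^{3/2}/2)^{n-1}$.

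The only point requiring a little care — and the closest thing to an obstacle — is the justification that $\varphi_{Q_{A_n}}$ really is given by the integral $\int_{P_{A_n}} e^{-\tr(y\eta)}\,dy$ rather than an integral over some other set: this is precisely the statement that the dual cone of $P_{A_n}$ is $Q_{A_n}$, i.e. $(P_{A_n})^* = Q_{A_n}$, combined with the biduality $Q_{A_n}^{**} = Q_{A_n}$ valid for open convex cones, so that the dual of $Q_{A_n}$ is $P_{A_n}$. This is exactly how $Q_G$ was introduced at the start of Section~\ref{sec:1}, so no new work is needed. I would close by remarking that the proof also re-derives, as a by-product, that the canonical measure $\varphi_{Q_{A_n}}(\eta)\,d\eta$ agrees up to a scalar with $\varphi_{A_n}(\eta)\,d\eta$, the measure appearing throughout Section~\ref{LaplaceSection}, which explains a posteriori why $\varphi_{A_n}$ is the natural weight in the Laplace transform formulas \eqref{eqn:laplacedelta} and \eqref{FORM_LaplaceDelta}.
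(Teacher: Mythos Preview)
Your proof is correct and follows exactly the paper's approach: substitute $\us=(0,\dots,0)$ into Theorem~\ref{laplaceDelta}. One minor slip: the constant simplifies to $\pi^{(n-1)/2}\Gamma(3/2)^{n-1}=(\pi/2)^{n-1}=(\pi^2/4)^{(n-1)/2}$, not $(\pi^{3/2}/2)^{n-1}$, matching the paper's $\bigl(\tfrac{4}{\pi^2}\bigr)^{(n-1)/2}\int_{P_{A_n}} e^{-\tr(y\eta)}\,dy=\varphi_{A_n}(\eta)$.
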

\begin{proof}
 The result,\, 
 $\label{eqn:canonical}
  \left(\frac{4}{\pi^2}\right)^{\frac{n-1}{2}}\int_{P_{A_n}} e^{-\tr(y\eta)} dy\,=\, \varphi_{A_n}(\eta)
 $,
 is obtained by substituting $\us = (0,\hdots,0)$ into Theorem \ref{laplaceDelta}.
\end{proof}
\begin{rem}
 Formulas {\eqref{eqn:laplacedelta} and \eqref{FORM_LaplaceDelta}} may seem similar but in 
 \eqref{FORM_LaplaceDelta} the integrand does not contain the characteristic function of the cone $P_{A_n}$. 
This function 
 is unknown except for $A_4$ when it is not a power function \cite[Prop.3.2]{L-M}.
\end{rem}

\section{Wishart exponential families on $Q_G$}\label{RieszWishartQ}

Let us define the Riesz measure $R_{\us}^{(M)}$ on $Q_G$ by 
\begin{equation}
 dR_{\us}^{(M)}(x)= C_{\us} \delta_{\us}^{(M)}(x)\varphi_{A_n}(x)1_{Q_{A_n}}(x) dx,
\end{equation}
where $C_{\us}^{-1} = \pi^{(n-1)/2}\left(\underset{i\neq M}{\prod}\Gamma(s_i-\frac{1}{2}) \right) \Gamma(s_M)$. 
Therefore, from Theorem \ref{laplacedelta}, the Laplace transform of the measure $dR_{\us}^{(M)}$ is given 
for all $s_i >\frac{1}{2}$,   $i\neq M$ and $s_M>0$ by 
\begin{equation}\label{LapRieszQ}
 \mathcal{L}(R_{\us}^{(M)} )(y)= \int_{Q_{A_n}} e^{-\tr(y\eta)}dR_{\us}^{(M)}(\eta) =  \Delta^{(M)}_{-\us}(y),\ \ \ y\in P_{A_n}.
\end{equation}
Wishart natural exponential family $\gamma_{\us,y}^{(M)}$ on $Q_G$ is, by definition,  generated by 
the Riesz measure $dR_{\us}^{(M)}$.
The density function of the Wishart {distribution} on $Q_G$ is given by
\begin{equation}
 \gamma_{s,y}^{(M)}(dx)= C_{\us} e^{-\tr(yx)}\Delta^{(M)}_{\us}(y) \delta_{\us}^{(M)}(x)\varphi_{A_n}(x)1_{Q_{A_n}}(x) dx.
\end{equation}
The Laplace transform of $\gamma_{\us,y}^{(M)}(dx)$  is 
\begin{equation*}
 \mathcal{L}(\gamma_{\us,y}^{(M)})(z) = \frac{\mathcal{L}(R_{\us}^{(M)} )(z +y)}{ \mathcal{L}(R_{\us}^{(M)} )(y)}=
 \frac{\Delta^{(M)}_{-\us}(z+y)}{\Delta^{(M)}_{-\us}(y)}.
\end{equation*}
The family
$ \gamma_{\us,y}^{(M)}$ does not depend on the normalization of the Riesz measure.

\subsection{{Mean and covariance of the Wishart distributions on $Q_G$}}

In this {subsection} we derive a formula for the  mean  of the Wishart exponential family on the cones $Q_G$.
It is known from the general theory of exponential families of distributions,  that the mean of $\gamma^{(M)}_{\us,y}$ 
is obtained by differentiation with respect to $y$ of 
the Laplace transform of the Riesz measure:
\begin{equation}\label{meanOne}
m^{(M)}_{\us}(y)= -{\rm grad}_y \log \Delta^{(M)}_{-\us}(y) \in Q_G.
\end{equation}


For all matrix $A$ in $Z_G$  and a subset $B\subset V$ of the set of vertices $V$ of $G$ we note 
$(A_B)^0$ the matrix in $Z_G$  such that 
$(A_B)^0_{ij}= \begin{cases}
  A_{ij} & \quad \textnormal{if} \quad {i,j\in V}, \\
  0 & \textnormal{otherwise}.
 \end{cases}
$

\begin{pro}\label{PROPmsM}
The mean function of the Wishart family $\gamma_{\us,y}^{(M)}$ on $Q_G$ is equal {to}
 \begin{align}\label{msM}
  {}& m_{\us}^{(M)}(y)= \\
  &\pi\left(\sum_{i=1}^{M-1} (s_i-s_{i+1}) [(y_{\{1:i\}})^{-1}]^0 + s_M y^{-1}+
	\sum_{i=M+1}^{n} (s_{i}-s_{i-1}) [ (y_{\{i:n\}})^{-1}]^0\right). \nonumber
 \end{align}
\end{pro}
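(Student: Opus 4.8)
The natural starting point is \eqref{meanOne} together with \eqref{LapRieszQ}: the mean of the family $\gamma_{\us,y}^{(M)}$ is $m_{\us}^{(M)}(y)=-\grad_y\log\mathcal{L}(R_{\us}^{(M)})(y)=-\grad_y\log\Delta_{-\us}^{(M)}(y)$, the gradient being taken in the Euclidean space $Z_G$ equipped with the inner product $\langle y,\eta\rangle=\tr(y\eta)$, hence an element of the dual space $I_G$. First I would recall why this formula applies here: by Theorem \ref{laplacedelta} the Laplace transform $\mathcal{L}(R_{\us}^{(M)})$ is finite and analytic on the interior of the parameter domain, so differentiation under the integral sign is legitimate and $-\grad_y\log\mathcal{L}(R_{\us}^{(M)})(y)=\int_{Q_{A_n}}\eta\,\gamma_{\us,y}^{(M)}(d\eta)$ is the barycentre of $\gamma_{\us,y}^{(M)}$.

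Next I would insert the explicit product expression \eqref{P(M)}. Since passing from $\us$ to $-\us$ merely changes the sign of every exponent,
\[
\log\Delta_{-\us}^{(M)}(y)=-\sum_{i=1}^{M-1}(s_i-s_{i+1})\log|y_{\{1:i\}}|-s_M\log|y|-\sum_{i=M+1}^{n}(s_i-s_{i-1})\log|y_{\{i:n\}}|,
\]
so that $m_{\us}^{(M)}(y)$ is the matching linear combination of the three types of gradients $\grad_y\log|y_{\{1:i\}}|$, $\grad_y\log|y|$ and $\grad_y\log|y_{\{i:n\}}|$, with coefficients $s_i-s_{i+1}$, $s_M$ and $s_i-s_{i-1}$ respectively.

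The heart of the argument is then a one-line lemma about these gradients: for an ``interval'' subset $I\subset V$ (of the form $\{1:i\}$ or $\{i:n\}$) one has $\grad_y\log|y_I|=\pi\bigl([(y_I)^{-1}]^0\bigr)$, where $[(y_I)^{-1}]^0$ denotes the $n\times n$ matrix carrying the ordinary inverse of the principal submatrix $y_I$ in the block indexed by $I$ and zeros elsewhere (for $I=V$ this reads $\grad_y\log|y|=\pi(y^{-1})$). To see it, differentiate along $h\in Z_G$ using the classical identity $d\log\det(u)=\tr(u^{-1}\,du)$: the directional derivative of $y\mapsto\log|y_I|$ equals $\tr\bigl((y_I)^{-1}h_I\bigr)=\tr\bigl([(y_I)^{-1}]^0\,h\bigr)$; and since $h\in Z_G$ has vanishing entries outside $E$, $\tr(Ah)=\langle\pi(A),h\rangle$ for every $A\in S_n$, so the $I_G$-gradient is indeed $\pi\bigl([(y_I)^{-1}]^0\bigr)$. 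Substituting the three gradients into the linear combination above and using the linearity of $\pi$ yields precisely \eqref{msM}. Finally, $m_{\us}^{(M)}(y)$ lies in $Q_G$ because it is the mean of the probability measure $\gamma_{\us,y}^{(M)}$, which is supported on the open convex cone $Q_{A_n}=Q_G$ and is not concentrated on any supporting hyperplane.

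I expect the only subtle point to be the bookkeeping of the projection $\pi$: one differentiates on the constrained space $Z_G$ rather than on all of $S_n$, so it is the test directions $h\in Z_G$ that force $\pi$ to appear in front of the zero-padded block inverses in \eqref{msM} (these zero-padded matrices need not themselves lie in $Z_G$). It is also worth checking carefully that each $y\mapsto\log|y_{\{1:i\}}|$, although written with a principal submatrix, is a genuine smooth function of $y\in P_G$ and that its unconstrained gradient is the zero-padded block inverse. Beyond that, everything reduces to routine differentiation of determinants.
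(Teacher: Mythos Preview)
Your proposal is correct and follows the same route as the paper's proof, which is the terse one-liner ``Use formulas \eqref{P(M)}, \eqref{meanOne} and $\grad\log|y_A|=\bigl((y_A)^{-1}\bigr)^0$.'' You simply spell out the justification of the log-determinant gradient identity and are more careful than the paper about why the projection $\pi$ appears (the zero-padded block inverses do not lie in $Z_G$, so the $I_G$-valued gradient is their image under $\pi$); this extra bookkeeping is a genuine clarification, not a different argument.
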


\begin{proof}
Use formulas \eqref{P(M)}, \eqref{meanOne} and\, $\grad \log |y_{A}|= \left((y_A)^{-1}\right)^0$.

\end{proof}

\begin{pro}\label{LEMkappa}
For all $y\in P_G$, {we have} 
$$
\langle m^{(M)}_{\us}(y), y \rangle=\kappa(\us),
$$
where the constant $\kappa(\us)$ {is} $\sum_{i=1}^n s_i -(n-M)s_M$.
\end{pro}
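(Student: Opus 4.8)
The plan is to compute the pairing $\langle m^{(M)}_{\us}(y), y\rangle$ directly from the explicit formula \eqref{msM} for the mean. Since $m^{(M)}_{\us}(y)=\pi(\,\cdot\,)$ with the argument a sum of matrices in $S_n$, and since the trace pairing satisfies $\langle \pi(A), y\rangle = \tr(Ay)$ for $y\in Z_G$ (the off-support entries of $y$ vanish, so projecting the first argument does not change the trace), I would first reduce to
\[
\langle m^{(M)}_{\us}(y), y\rangle
= \sum_{i=1}^{M-1}(s_i-s_{i+1})\,\tr\bigl([(y_{\{1:i\}})^{-1}]^0\, y\bigr)
 + s_M\,\tr(y^{-1}y)
 + \sum_{i=M+1}^{n}(s_i-s_{i-1})\,\tr\bigl([(y_{\{i:n\}})^{-1}]^0\, y\bigr).
\]

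The key observation is the following trace identity: for any index subset of the form $B=\{1{:}i\}$ (or $B=\{i{:}n\}$), one has $\tr\bigl([(y_B)^{-1}]^0\,y\bigr)=\tr\bigl((y_B)^{-1} y_B\bigr)=|B|$. Indeed, $(y_B)^{-1}$ is an $|B|\times|B|$ matrix, $[(y_B)^{-1}]^0$ is its extension by zeros to an $n\times n$ matrix supported on $B\times B$, and taking the trace of its product with $y$ only sees the submatrix $y_B$. Hence $\tr\bigl([(y_{\{1:i\}})^{-1}]^0 y\bigr)=i$, $\tr\bigl([(y_{\{i:n\}})^{-1}]^0 y\bigr)=n-i+1$, and $\tr(y^{-1}y)=n$. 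This is the main (and essentially only) computational step; it is routine but is where the structure of the $(\cdot)^0$ operation is used.

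Substituting these values gives
\[
\langle m^{(M)}_{\us}(y), y\rangle
= \sum_{i=1}^{M-1} i(s_i-s_{i+1}) + n s_M + \sum_{i=M+1}^{n}(n-i+1)(s_i-s_{i-1}),
\]
which is independent of $y$, already proving the first assertion. It then remains to identify this constant with $\kappa(\us)=\sum_{i=1}^n s_i-(n-M)s_M$. For this I would perform an Abel summation (summation by parts) on each of the two sums, using $s_0=0$ for the first telescoping boundary term and $s_{n+1}=0$ for the last; the first sum collapses to $\sum_{i=1}^{M-1}s_i-(M-1)s_M$ (after writing $\sum_{i=1}^{M-1}i s_i-\sum_{i=1}^{M-1}i s_{i+1}=\sum_{i=1}^{M-1}i s_i-\sum_{j=2}^{M}(j-1)s_j$), and symmetrically the second sum collapses to $\sum_{i=M+1}^{n}s_i-(n-M)s_M$. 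Adding the middle term $ns_M$ and combining, the coefficient of $s_M$ becomes $n-(M-1)-(n-M)=1$, so the total is $\sum_{i=1}^{n}s_i-(n-M)s_M$, as claimed.

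I do not anticipate a serious obstacle here; the only thing to be careful about is the bookkeeping of the boundary terms in the two Abel summations and the edge cases $M=1$ and $M=n$ (where one of the two sums is empty and one reads off the answer immediately from the conventions $s_0=s_{n+1}=0$). Everything else is a short direct calculation from \eqref{msM}.
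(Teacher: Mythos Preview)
Your approach is different from the paper's and is perfectly sound up to the last line. The paper does not compute from \eqref{msM}; instead it uses an Euler homogeneity argument: since $\Delta^{(M)}_{-\us}(cy)=c^{-\kappa(\us)}\Delta^{(M)}_{-\us}(y)$, one has $\langle\mathrm{grad}_y\log\Delta^{(M)}_{-\us}(y),\,y\rangle=\tfrac{d}{dt}\log\Delta^{(M)}_{-\us}(ty)\big|_{t=1}=-\kappa(\us)$, which combined with \eqref{meanOne} gives the result. Your route via the trace identity $\tr\bigl([(y_B)^{-1}]^0 y\bigr)=|B|$ is a nice elementary alternative that avoids invoking homogeneity and works directly from \eqref{msM}; the paper's argument is shorter and more conceptual but needs the scaling behaviour of $\Delta^{(M)}_{\us}$.

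There is, however, a genuine slip in your final step. You correctly obtain
\[
\sum_{i=1}^{M-1} i(s_i-s_{i+1}) \;=\; \sum_{i=1}^{M-1}s_i-(M-1)s_M,
\qquad
\sum_{i=M+1}^{n}(n-i+1)(s_i-s_{i-1}) \;=\; \sum_{i=M+1}^{n}s_i-(n-M)s_M,
\]
and you correctly note that together with the middle term $ns_M$ the coefficient of $s_M$ is $n-(M-1)-(n-M)=1$. But from ``coefficient of $s_M$ equals $1$'' the conclusion is $\sum_{i=1}^{n}s_i$, \emph{not} $\sum_{i=1}^{n}s_i-(n-M)s_M$; your last sentence is a non sequitur. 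In fact your own computation (and, independently, the homogeneity degree of $\Delta^{(M)}_{\us}$ read off from \eqref{P(M)}, or the product form in Definition~\ref{DEFdeltas} where each factor has degree~$1$) shows $\kappa(\us)=\sum_{i=1}^n s_i$ for every $M$. The extra $-(n-M)s_M$ in the stated formula appears to be a typo in the paper; it is harmless downstream, since the proof of Theorem~\ref{inverseMean} only uses that $\langle m^{(M)}_{\us}(y),y\rangle$ is constant in $y$, not its precise value. So: keep your argument, but correct the last line to $\kappa(\us)=\sum_{i=1}^n s_i$ and flag the discrepancy with the printed statement.
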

\begin{proof}
Observe that by \eqref{P(M)}, for any $c>0$,\,
$\Delta_{-\us}^{(M)}(cy)= c^{-\kappa(\us)}\Delta_{-\us}^{(M)}(y).$
By \eqref{meanOne},
$
\langle m^{(M)}_{\us}(y) , y \rangle= - \langle {\rm grad}_y \log \Delta^{(M)}_{-\us}(y)  , y \rangle
$.
Set $F(y)=\log \Delta^{(M)}_{-\us}(y)$. By the  chain rule,
$
\langle {\rm grad}_y F(y)  , y \rangle= \frac{d}{dt}F(ty)\big\vert_{t=1}. 
$
The map $t\rightarrow F(ty)= \log \varphi(t), \R^+\rightarrow \R,$
where $\varphi(t)=\Delta^{(M)}_{-\us}(ty)$, satisfies $\varphi(ct)=  c^{-\kappa(\us)} \varphi(t)$.
Hence $\varphi(c)=  c^{-\kappa(\us)} \varphi(1)$ and 
$
\frac{d}{dt}F(ty)\big\vert_{t=1}= \frac{\varphi'(1)}{\varphi(1)}=-\kappa(\us).
$
Thus $\langle {\rm grad}_y F(y)  , y \rangle=-\kappa(\us)$ and the result follows.
\end{proof}

Differentiating the mean function gives the covariance {function}. 
{For $A \in S_n$, let $\PP(A): Z_G \rightarrow {I_G}$ be the quadratic operator defined by $\PP(A)u ={\pi}(AuA),\,\,\,u \in Z_G$}.

\begin{pro}
The covariance function of the Wishart family $\gamma_{\us,y}^{(M)}$ on $Q_G$ is equal
 \begin{align}\label{covariance}
   v(y)&=-{m'}_{\us}^{(M)}(y)= \sum_{i=1}^{M-1} (s_i-s_{i+1})\PP\left[ \left((y_{\{1:i\}})^{-1}\right)^0\right] \,
   + s_M \PP(y^{-1})\\
   & \,+    \sum_{i=M+1}^{n} (s_i-s_{i-1})\PP\left[ \left((y_{\{i:n\}})^{-1}\right)^0\right].\nonumber
  \end{align}
  \end{pro}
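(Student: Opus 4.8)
The plan is to obtain the covariance function by differentiating the mean function $m_{\us}^{(M)}(y)$ given in Proposition \ref{PROPmsM}. Since the covariance of a natural exponential family is $v(y) = -m'_{\us}{}^{(M)}(y)$ (the negative of the differential of the mean map, equivalently the Hessian of the log-Laplace transform $-\log\Delta^{(M)}_{-\us}(y)$), it suffices to differentiate, term by term, each of the three kinds of summands appearing in \eqref{msM}: the terms $(s_i-s_{i+1})\,\pi([(y_{\{1:i\}})^{-1}]^0)$ for $i \le M-1$, the term $s_M\,\pi(y^{-1})$, and the terms $(s_i-s_{i-1})\,\pi([(y_{\{i:n\}})^{-1}]^0)$ for $i \ge M+1$.

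First I would recall the basic differentiation rule for the matrix inverse: for $y$ invertible and $u \in S_n$ (or $u \in Z_G$), the directional derivative of $y \mapsto y^{-1}$ in the direction $u$ is $-y^{-1}uy^{-1}$. Applied to the middle term, differentiating $\pi(y^{-1})$ in direction $u \in Z_G$ gives $-\pi(y^{-1}uy^{-1}) = -\PP(y^{-1})u$, so this contributes $s_M\,\PP(y^{-1})$ to $v(y)$. For a submatrix block such as $y_{\{1:i\}}$, the map $y \mapsto y_{\{1:i\}}$ is linear, so its derivative in direction $u$ is $u_{\{1:i\}}$; combining with the inverse rule, the derivative of $(y_{\{1:i\}})^{-1}$ in direction $u$ is $-(y_{\{1:i\}})^{-1}u_{\{1:i\}}(y_{\{1:i\}})^{-1}$. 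The zero-extension $(\cdot)^0$ and the projection $\pi$ are both linear, and one checks that $\pi\big([(y_{\{1:i\}})^{-1}u_{\{1:i\}}(y_{\{1:i\}})^{-1}]^0\big) = \PP\big([(y_{\{1:i\}})^{-1}]^0\big)u$ for $u\in Z_G$, because the extension-by-zero followed by the quadratic sandwich and projection only sees the $\{1:i\}$ block of $u$. This identifies the contribution of each such term as $(s_i-s_{i+1})\,\PP\big([(y_{\{1:i\}})^{-1}]^0\big)$; the lower-right blocks $y_{\{i:n\}}$ are handled identically, giving the third sum in \eqref{covariance}. Assembling the three pieces with the correct signs (the overall minus sign in $v(y)=-m'$ cancels the minus from the inverse-derivative rule) yields the claimed formula.

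The only point requiring a little care — and the step I expect to be the main obstacle — is verifying cleanly that differentiating the zero-padded truncated inverse $y \mapsto [(y_B)^{-1}]^0$ (for $B = \{1:i\}$ or $B=\{i:n\}$) produces exactly the quadratic operator $\PP\big([(y_B)^{-1}]^0\big)$ when restricted to directions $u \in Z_G$, rather than some operator involving the full ambient sandwich. This is where the compatibility between the support structure of $Z_G$, the block-restriction, and the definition $\PP(A)u = \pi(AuA)$ must be used: since $[(y_B)^{-1}]^0$ vanishes outside the block $B\times B$, the product $[(y_B)^{-1}]^0\,u\,[(y_B)^{-1}]^0$ depends on $u$ only through $u_B$, and equals the zero-extension of $(y_B)^{-1}u_B(y_B)^{-1}$; applying $\pi$ then gives the asserted identity. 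Once this lemma-type observation is in place, the rest is the routine term-by-term differentiation sketched above, and the proof is complete.

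\begin{proof}
Recall that the covariance function of a natural exponential family equals the differential of its mean function; more precisely $v(y) = -m'_{\us}{}^{(M)}(y)$, the operator $Z_G \to I_G$ obtained by differentiating \eqref{msM}. We differentiate each summand of \eqref{msM} in an arbitrary direction $u \in Z_G$.

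For the central term, the derivative of $y \mapsto y^{-1}$ in direction $u$ is $-y^{-1}uy^{-1}$, so the derivative of $s_M\,\pi(y^{-1})$ is $-s_M\,\pi(y^{-1}uy^{-1}) = -s_M\,\PP(y^{-1})u$.

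For a term of the form $(s_i-s_{i+1})\,\pi\big([(y_{\{1:i\}})^{-1}]^0\big)$ with $1 \le i \le M-1$, note that $y \mapsto y_{\{1:i\}}$ is linear with derivative $u_{\{1:i\}}$, hence the derivative of $(y_{\{1:i\}})^{-1}$ in direction $u$ is $-(y_{\{1:i\}})^{-1}u_{\{1:i\}}(y_{\{1:i\}})^{-1}$. Since $(\cdot)^0$ and $\pi$ are linear and $[(y_{\{1:i\}})^{-1}]^0$ is supported in the block indexed by $\{1:i\}$, one has
\[
 \pi\Big(\big[(y_{\{1:i\}})^{-1}u_{\{1:i\}}(y_{\{1:i\}})^{-1}\big]^0\Big)
 = \pi\Big( \big[(y_{\{1:i\}})^{-1}\big]^0\, u\, \big[(y_{\{1:i\}})^{-1}\big]^0 \Big)
 = \PP\Big(\big[(y_{\{1:i\}})^{-1}\big]^0\Big) u,
\]
so this summand contributes $-(s_i-s_{i+1})\,\PP\big([(y_{\{1:i\}})^{-1}]^0\big)$ to $m'_{\us}{}^{(M)}(y)$. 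The same computation applied to the lower-right blocks shows that a term $(s_i-s_{i-1})\,\pi\big([(y_{\{i:n\}})^{-1}]^0\big)$ with $M+1 \le i \le n$ contributes $-(s_i-s_{i-1})\,\PP\big([(y_{\{i:n\}})^{-1}]^0\big)$.

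Summing these contributions and multiplying by $-1$ gives
\[
 v(y) = \sum_{i=1}^{M-1}(s_i-s_{i+1})\,\PP\Big(\big[(y_{\{1:i\}})^{-1}\big]^0\Big)
 + s_M\,\PP(y^{-1})
 + \sum_{i=M+1}^{n}(s_i-s_{i-1})\,\PP\Big(\big[(y_{\{i:n\}})^{-1}\big]^0\Big),
\]
which is \eqref{covariance}.
\end{proof}
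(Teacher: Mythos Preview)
Your proof is correct and follows exactly the approach the paper indicates: the paper simply states ``Differentiating the mean function gives the covariance function'' and then records the formula, without spelling out the computation. You have supplied the routine details the paper omits, including the key block identity $\pi\big([(y_B)^{-1}u_B(y_B)^{-1}]^0\big)=\PP\big([(y_B)^{-1}]^0\big)u$, which is indeed what makes the term-by-term differentiation go through.
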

\subsection{{Inverse mean map}}\label{SecInverseMean}
 In the study of the exponential family
 $(\gamma^{(M)}_{\us,y})_{y\in P_G}$ it is important to determine explicitly 
 {the inverse of} the mean map
$
\psi^{(M)}_{\us}:\  m=m^{(M)}_{\us}(y)\mapsto y,
$
{which we refer to {as} the inverse mean map in the sequel.}
The following theorem is known {for Wishart exponential families}  on homogeneous cones \cite[]{ishiHammamet}.
Surprisingly, it is also true on $Q_G$.
\begin{thm}\label{inverseMean}
 The inverse mean map $\psi^{(M)}_{\us}$ is given by the formula
\begin{equation}\label{inverse}
\psi^{(M)}_{\us}(m)={\rm grad}_m \log\delta_{\us}^{(M)}(m),\ m\in Q_G.
\end{equation}
\end{thm}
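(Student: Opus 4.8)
The plan is to exploit the duality between the power functions $\Delta^{(M)}_{-\us}$ on $P_G$ and $\delta^{(M)}_{-\us}$ on $Q_G$ together with the convexity structure of the exponential family. Recall from \eqref{meanOne} that $m^{(M)}_{\us}(y) = -\grad_y \log \Delta^{(M)}_{-\us}(y)$, so the mean map is (minus) the gradient of the convex function $F(y) := \log \Delta^{(M)}_{-\us}(y)$ (up to sign conventions; strict convexity of $-\log\mathcal{L}(R^{(M)}_{\us})$ on $P_G$ is the standard fact that the log-Laplace transform of a nondegenerate measure is strictly convex, which guarantees that $\psi^{(M)}_{\us}$ is well defined as a genuine inverse). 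The claim is then that the inverse is given by the gradient of the \emph{other} power function, $\psi^{(M)}_{\us}(m) = \grad_m \log \delta^{(M)}_{\us}(m)$. This is exactly the Legendre-duality phenomenon familiar from the symmetric cone case, and the whole point is that Theorem \ref{delta-Delta} supplies precisely the duality relation $\delta^{\prec}_{\us}(\pi(y^{-1})) = \Delta^{(M)}_{-\us}(y)$ needed to make it work on $Q_G$ and $P_G$.

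The key steps, in order. First, I would set $g := \grad_m \log \delta^{(M)}_{\us}(m)$ for $m \in Q_G$ and aim to show $m^{(M)}_{\us}(g) = m$, i.e. that $g$ is a right inverse of the mean map; since $\psi^{(M)}_{\us}$ is a bijection (strict convexity), a right inverse is the inverse. Second, I would compute $\grad_m \log\delta^{(M)}_{\us}(m)$ explicitly from the formula \eqref{delta(M)} for $\delta^{(M)}_{\us}$, using $\grad_\eta \log|\eta_{\{i:i+1\}}| = [(\eta_{\{i:i+1\}})^{-1}]^0$ and $\grad_\eta \log \eta_{ii} = [(\eta_{ii})^{-1}]^0 = \eta_{ii}^{-1}E_{ii}$, in complete analogy with the proof of Proposition \ref{PROPmsM}. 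This produces $g$ as an explicit element of $P_G$ assembled from inverses of the $2\times 2$ blocks $\eta_{\{i:i+1\}}$ and the diagonal entries. Third — and this is where the structural input enters — I would verify that the $y = g$ so produced satisfies $\pi(y^{-1}) = $ (the appropriate rearrangement of) $m$, or more directly that plugging $y = g$ into the explicit mean formula \eqref{msM} of Proposition \ref{PROPmsM} returns $m$. An efficient way to organize the bookkeeping is the recurrent construction of Section \ref{ReccConstr}: using $\Psi_n$ and the factorizations \eqref{eqn:recurrent_delta}, \eqref{eqn:recurrent_Delta}, both $\log\delta^{(M)}_{\us}$ and $\log\Delta^{(M)}_{-\us}$ split off an $\alpha^{s_1}$ (resp.\ $a^{s_1}$) factor plus an $(n-1)$-variable piece, and the gradient relation reduces to the same statement on $A_{n-1}$ plus a $2\times 2$ computation in the variables $(\alpha,\beta)$; the base case $n=1$ is just $\psi_s(m) = \grad_m(s\log m) = s/m = (s/m)$, matching $m_s(y) = s/y$ inverted.

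The main obstacle I anticipate is the combinatorial matching of terms in step three: the mean formula \eqref{msM} expresses $m$ as a sum of "padded inverses" $[(y_{\{1:i\}})^{-1}]^0$, $[(y_{\{i:n\}})^{-1}]^0$, and $y^{-1}$, whereas $\grad_m\log\delta^{(M)}_{\us}(m)$ is naturally a sum of padded inverses of the \emph{small} blocks $\eta_{\{i:i+1\}}$ and reciprocals of diagonal entries; showing these two descriptions are inverse to one another entry-by-entry is the heart of the matter. The clean route is to avoid the brute-force verification and instead argue abstractly: the function $\Phi(y) := \log\Delta^{(M)}_{-\us}(y)$ on $P_G$ and $\Phi^*(m) := -\log\delta^{(M)}_{\us}(m) + \text{const}$ on $Q_G$ should be shown to be Legendre-conjugate (i.e.\ $\Phi^*(m) = \sup_{y\in P_G}(\langle y,m\rangle - \Phi(y))$ up to additive constants), which by Theorem \ref{delta-Delta}, the homogeneity relation $\Delta^{(M)}_{-\us}(cy) = c^{-\kappa(\us)}\Delta^{(M)}_{-\us}(y)$ from Proposition \ref{LEMkappa}, and the bijection $y\mapsto\pi(y^{-1})$ of $P_G$ onto $Q_G$, reduces to a scaling identity; Legendre duality then yields $\grad\Phi^* = (\grad\Phi)^{-1}$, which is exactly \eqref{inverse}. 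I would present the argument via the recurrence for concreteness but flag the Legendre-duality viewpoint as the conceptual reason the theorem holds.
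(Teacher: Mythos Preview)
Your Legendre-duality framework is exactly right and matches the paper's strategy: one shows that $-\log\Delta^{(M)}_{-\us}$ and $\log\delta^{(M)}_{\us}$ are conjugate (up to a constant), and then $\psi^{(M)}_{\us}=\grad_m\log\delta^{(M)}_{\us}(m)$ follows formally. The homogeneity input from Proposition~\ref{LEMkappa} is also used precisely as you say, to kill the $\langle m,y\rangle$ term in the conjugate.

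There is, however, a genuine gap in the ``clean route'' you outline. You propose to obtain the conjugacy from Theorem~\ref{delta-Delta}, i.e.\ from $\delta^{(M)}_{\us}(\pi(y^{-1}))=\Delta^{(M)}_{-\us}(y)$, together with the bijection $y\mapsto\pi(y^{-1})$. But the Legendre transform of $-\log\Delta^{(M)}_{-\us}$ is evaluated at the critical point determined by the \emph{mean map} $y\mapsto m^{(M)}_{\us}(y)$, not by $y\mapsto\pi(y^{-1})$; these two maps coincide only when $\us=(1,\dots,1)$. What is actually needed is the stronger identity
\[
\delta^{(M)}_{\us}\bigl(m^{(M)}_{\us}(y)\bigr)=c_{\us}\,\Delta^{(M)}_{-\us}(y)
\quad\text{for all }y\in P_G,
\]
with an explicit constant $c_{\us}=\prod_i s_i^{s_i}$. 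This is the paper's Proposition~\ref{delta_m_s}, and it is not a formal consequence of Theorem~\ref{delta-Delta}: the argument of $\delta^{(M)}_{\us}$ on the left is a weighted sum of padded inverses $[(y_{\{1:i\}})^{-1}]^0$, $y^{-1}$, $[(y_{\{i:n\}})^{-1}]^0$ (formula~\eqref{msM}), not $\pi(y^{-1})$.

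The paper establishes Proposition~\ref{delta_m_s} by a direct determinantal computation. The key tool is Lemma~\ref{determinantElargi}, a generalization of Lemma~\ref{determinant} that computes $2\times2$ determinants built from entries of \emph{different} inverse submatrices $(y_{\{1:k\}})^{-1}$ and $(y_{\{1:m\}})^{-1}$; remarkably these determinants are independent of one of the two indices, which produces telescoping and the factor $\prod_i s_i^{s_i}$. Once Proposition~\ref{delta_m_s} is in hand, the Legendre argument you sketch goes through verbatim. Your recurrence idea might furnish an alternative proof of Proposition~\ref{delta_m_s}, but as you note this is where the real combinatorial work lies, and it is not bypassed by Theorem~\ref{delta-Delta}.
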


The proof consists in  following  steps: \\
1. One shows that there exists a constant $c_{\us}$ depending only on $\us$  such that for any $y\in P_G$
$$
  \delta^{(M)}_{\us}(m^{(M)}_{\us}(y))={c_{\us} \Delta^{(M)}_{-\us}(y)=}  c_{\us}\delta^{(M)}_{\us}(\pi(y^{-1}) ).
$$
This is done in Proposition \ref{delta_m_s} below.
\\
2. One uses a differential calculus argument, based  on the Legendre transform methods. 

\begin{pro}\label{delta_m_s}
The following formula holds  for any $y\in P_G$  and $\us\in \mathbb{R}^n$:
$$
 \delta^{(M)}_{\us}(m^{(M)}_{\us}(y))= {\left(\prod_{i=1}^n s_i^{s_i}\right) \Delta^{(M)}_{-\us}(y)}= \left(\prod_{i=1}^n s_i^{s_i}\right) \; \delta^{(M)}_{\us}(\pi(y^{-1}) ).
$$
\end{pro}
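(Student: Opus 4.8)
The plan is to compute $\delta^{(M)}_{\us}(m^{(M)}_{\us}(y))$ directly, exploiting the explicit block structure of the mean $m^{(M)}_{\us}(y)$ given in Proposition \ref{PROPmsM} and the explicit product formula \eqref{delta(M)} for $\delta^{(M)}_{\us}$. The key observation is that each factor appearing in \eqref{delta(M)} is, up to sign, a $1\times 1$ or $2\times 2$ minor of $m = m^{(M)}_{\us}(y) \in Q_G$ involving only consecutive indices, so I only need to understand the entries $m_{ii}$ and $m_{i,i+1}$ of the mean. From the expression
$$
m = \pi\!\left(\sum_{i=1}^{M-1} (s_i-s_{i+1}) [(y_{\{1:i\}})^{-1}]^0 + s_M y^{-1} + \sum_{i=M+1}^{n} (s_{i}-s_{i-1}) [(y_{\{i:n\}})^{-1}]^0\right),
$$
one reads off that the $(k,l)$-entry with $(k,l)\in E$ picks up a contribution from a block $(y_A)^{-1}$ precisely when $\{k,l\}\subset A$. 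For a fixed consecutive pair $\{i,i+1\}$, the blocks $\{1:j\}$ containing it are those with $j\ge i+1$, and the blocks $\{j:n\}$ containing it are those with $j\le i$; together with the full block (for the $s_M$ term) this lets me collapse the telescoping sums.

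The main computational step is then the following: for $i\le M-1$, I claim the $2\times 2$ submatrix $m_{\{i,i+1\}}$ equals $s_i \cdot \bigl((y_{\{1:i+1\}})^{-1}\bigr)_{\{i,i+1\}}$ plus a rank-one correction coming from the larger blocks, arranged so that $|m_{\{i,i+1\}}| = s_i^2 \,|(y_{\{1:i+1\}})^{-1}|/\bigl(\text{something cancelling against }m_{i+1,i+1}\text{-type terms}\bigr)$; more precisely, after forming the ratio in \eqref{delta(M)} the non-leading blocks cancel exactly as in the proof of Theorem \ref{delta-Delta} (via Lemmas \ref{nonconnected} and \ref{determinant}), leaving $\prod_i s_i^{s_i}$ times the corresponding ratio of minors of $\pi(y^{-1})$, which by definition is $\delta^{(M)}_{\us}(\pi(y^{-1}))$. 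The identity $\Delta^{(M)}_{-\us}(y) = \delta^{(M)}_{\us}(\pi(y^{-1}))$ is then exactly \eqref{delta-Delta-P} of Theorem \ref{delta-Delta}, giving the middle expression for free. I also expect it to be cleaner to verify the formula first for the canonical order $\prec_M$ of \eqref{order0} and use the recurrent reduction: apply $\Phi_n$ (resp. $\tilde\Phi_n$) and Lemma \ref{recurrenceLemma}\eqref{recurrenceLemma3} together with the fact that the mean map is compatible with the fibered structure of $Q_{A_n}$ over $Q_{A_{n-1}}$, so that the $s_1^{s_1}$ (resp. $s_n^{s_n}$) factor splits off and one closes by induction on $n$.

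The hard part will be bookkeeping the rank-one corrections in the off-diagonal entries $m_{i,i+1}$: unlike the diagonal entries, these are genuinely affected by several overlapping blocks, and showing that the determinant $|m_{\{i,i+1\}}|$ factors as a single power of $s_i$ times a ratio of minors of $y$ requires either a careful Schur-complement argument or the Sherman–Morrison-type identity applied block by block. I anticipate the inductive route via Proposition \ref{changeOfVar} and Lemma \ref{recurrenceLemma} to be the path of least resistance: in the fiber coordinate $\alpha$ the mean's first coordinate is $s_1\alpha^{-1}$ (from differentiating $\alpha^{s_1}$ in \eqref{eqn:recurrent_delta}), contributing exactly the factor $s_1^{s_1}\alpha^{-s_1} = s_1^{s_1}\Delta$-type term, and everything else is handled by the induction hypothesis on $Q_{A_{n-1}}$, with the base case $n=1$ being the elementary identity $\delta_s(m_s(y)) = \delta_s(s/y) = (s/y)^s = s^s y^{-s} = s^s \Delta_{-s}(y)$.
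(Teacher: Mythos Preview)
Your inductive route via $\Phi_n/\Psi_n$ is viable and is \emph{genuinely different} from what the paper does, but the load-bearing assertion ``the mean map is compatible with the fibered structure of $Q_{A_n}$ over $Q_{A_{n-1}}$'' is stated too loosely. What you actually need is the precise intertwining
\[
m^{(M)}_{\us}\bigl(\Phi_n(a,b,z)\bigr)=\Psi_n\Bigl(\tfrac{s_1}{a},\,-b,\; m^{(M)}_{(s_2,\ldots,s_n)}(z)\Bigr)\qquad(M\ge 2),
\]
and its $\tilde\Phi_n/\tilde\Psi_n$ mirror for $M\le n-1$. This is not in the paper and is not automatic: you must compute ${\rm grad}_y$ through the nonlinear change of variables $y=\Phi_n(a,b,z)$, using $a=y_{11}$, $b=y_{12}/y_{11}$, $z_{22}=y_{22}-y_{12}^2/y_{11}$, and then read off that the $(2{:}n)$-block of $m$ is exactly $m^{(M)}_{(s_2,\ldots,s_n)}(z)$ while $m_{11}=s_1/a+b^2(x_0)_{22}$, $m_{12}=-b(x_0)_{22}$ match the $\Psi_n$-coordinates $(\alpha,\beta)=(s_1/a,-b)$. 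Once this lemma is in place, \eqref{eqn:recurrent_Delta} and \eqref{eqn:recurrent_delta} close the induction exactly as you sketch: $\delta^{(M)}_{\us}(m)=\alpha^{s_1}\delta^{(M)}_{(s_2,\ldots,s_n)}(x_0)=(s_1/a)^{s_1}\prod_{i\ge2}s_i^{s_i}\,\Delta^{(M)}_{-(s_2,\ldots,s_n)}(z)=\prod_i s_i^{s_i}\,\Delta^{(M)}_{-\us}(y)$. Your remark about differentiating ``$\alpha^{s_1}$ in \eqref{eqn:recurrent_delta}'' conflates the $\alpha$-coordinate on $Q_G$ with the $a$-coordinate on $P_G$; it is \eqref{eqn:recurrent_Delta} you want.

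By contrast, the paper does \emph{not} induct: it computes each factor of $\delta^{\prec_M}_{\us}(m)$ directly. The $2\times2$ block $m_{\{i,i+1\}}$ is written as $\begin{pmatrix} x_i+a_i & b_i\\ b_i & c_i\end{pmatrix}$ with $x_i=(s_i-s_{i+1})\eta^{(i)}_{ii}$ and $a_i,b_i,c_i$ the telescoping tails. The key tool is a new lemma (Lemma~\ref{determinantElargi}) computing mixed $2\times2$ determinants $D_i^{k,m}$ built from entries of $(y_{\{1:k\}})^{-1}$ and $(y_{\{1:m\}})^{-1}$; the crucial fact is that $D_i^{k,m}$ depends only on the larger index $m$, which is exactly what makes the multilinear expansion of $\begin{vmatrix}a_i&b_i\\b_i&c_i\end{vmatrix}$ telescope down to $s_{i+1}\eta^{(i)}_{ii}c_i$. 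Your claim in paragraph~2 that $m_{\{i,i+1\}}$ is ``$s_i\cdot((y_{\{1:i+1\}})^{-1})_{\{i,i+1\}}$ plus a rank-one correction'' is not how the cancellation works and would not get you there; the paper shows the \emph{ratio} $|m_{\{i,i+1\}}|/c_i=s_i\eta^{(i)}_{ii}$, not a structural decomposition of the matrix itself.

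In short: your inductive approach is cleaner and avoids Lemma~\ref{determinantElargi} entirely, but you must prove the intertwining formula above (a one-paragraph Jacobian computation). The direct approach you sketch in paragraphs~1--2 is along the paper's lines but misses the specific determinantal identity that makes it go.
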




The proof of Proposition \ref{delta_m_s}  will need a generalization of Lemma  \ref{determinant}, where coefficients of inverse  matrices
of principal submatrices $y_{\{1:k\}}$ (or of $y_{\{k:n\}}$) are simultanously considered.
Define for $y\in P_G$,\,
$
\eta^{(k)}=(y_{\{1:k\}})^{-1},\ \ \ \eta^{[k]}=(y_{\{k:n\}})^{-1}. 
$
The rows and the columns of the matrix $\eta^{(k)}$ are numbered by $i=1,\ldots,k$
and   the rows and the columns of the matrix $\eta^{[k]}$ are numbered by $i=k,\ldots,n$.
\begin{lem}\label{determinantElargi}
 Let $y\in P_G$.
\begin{enumerate}
\item For all $i\in V$ and $k,m \ge  i+1$ we have
\begin{equation}\label{detElargi}
D_i^{k,m}:= \begin{vmatrix}
  \eta^{(k)}_{ii} & \eta^{(m)}_{i,i+1}\\
\eta^{(k)}_{i,i+1} & \eta^{(m)}_{i+1,i+1}
 \end{vmatrix}= |y_{\{1:m\}}|^{-1}\vert y_{{\{1:m\}}\backslash\{i,i+1\}} \vert.
\end{equation}
\item For all $i\in V$ and $k,m \le  i<n$ we have
\begin{equation}\label{detElargiBIS}
D_i^{[k,m]}:= \begin{vmatrix}
  \eta^{[k]}_{ii} & \eta^{[m]}_{i,i+1}\\
\eta^{[k]}_{i,i+1} & \eta^{[m]}_{i+1,i+1}
 \end{vmatrix}= |y_{\{k:n\}}|^{-1}\vert y_{{\{k:n\}}\backslash\{i,i+1\}} \vert.
\end{equation}
\end{enumerate}
\end{lem}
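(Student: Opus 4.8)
The plan is to reduce both parts to the already-established Lemma \ref{determinant} by exploiting the band structure of $y \in Z_{A_n}$. First I would observe that part (2) follows from part (1) by the symmetry $i \mapsto n+1-i$ that swaps the role of upper-left and lower-right submatrices, so it suffices to prove \eqref{detElargi}. Next, I would dispose of the dependence on $k$ and $m$ separately: since $k, m \ge i+1$, the entries $\eta^{(k)}_{ii}, \eta^{(k)}_{i,i+1}$ and $\eta^{(m)}_{i,i+1}, \eta^{(m)}_{i+1,i+1}$ are computed from the inverses of the submatrices $y_{\{1:k\}}$ and $y_{\{1:m\}}$, and the key point is that these cofactor expressions do not actually depend on how far $k$ or $m$ extend beyond $i+1$. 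Concretely, by the cofactor formula, $\eta^{(k)}_{ii} = |y_{\{1:k\}}|^{-1}\,|y_{\{1:k\}\setminus\{i\}}|$; and because $i \le k-1$ and $y$ is tridiagonal, Lemma \ref{nonconnected} factorizes $|y_{\{1:k\}\setminus\{i\}}| = |y_{\{1:i-1\}}|\,|y_{\{i+1:k\}}|$, so that $\eta^{(k)}_{ii} = |y_{\{1:i-1\}}|\,|y_{\{i+1:k\}}| / |y_{\{1:k\}}|$. A similar cofactor-plus-factorization computation gives $\eta^{(k)}_{i,i+1} = -y_{i,i+1}\,|y_{\{1:i-1\}}|\,|y_{\{i+2:k\}}| / |y_{\{1:k\}}|$, exactly as in the proof of Lemma \ref{determinant}.

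Then I would substitute these four expressions into the $2\times 2$ determinant $D_i^{k,m}$. Writing $a := |y_{\{1:i-1\}}|$, the determinant becomes
\begin{equation*}
D_i^{k,m} = \frac{a}{|y_{\{1:k\}}|\,|y_{\{1:m\}}|}\Bigl( |y_{\{i+1:k\}}|\,|y_{\{i+2:m\}}|\cdot a - y_{i,i+1}^2\,|y_{\{i+2:k\}}|\,|y_{\{i+1:m\}}|\,a \Bigr).
\end{equation*}
The point is that the expressions $|y_{\{1:k\}}|^{-1}|y_{\{i+1:k\}}|$ and $|y_{\{1:k\}}|^{-1}|y_{\{i+2:k\}}|$ should be rearranged: using Lemma \ref{nonconnected} in the form $|y_{\{1:i-1\}}|\,|y_{\{i+1:k\}}| = |y_{\{1:k\}\setminus\{i\}}|$ and regrouping, the entire bracketed quantity telescopes. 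The cleanest route is to recognize that the bracket, after factoring out $a$, is exactly the expansion (along its $i,i+1$ rows/columns, again via Lemma \ref{nonconnected}) of $|y_{\{1:m\}}|\cdot|y_{\{i+1:k\}}|/|y_{\{1:i\}}|$-type terms; I expect that after collecting, the $k$-dependence collapses into a single factor $|y_{\{1:k\}}|^{-1}|y_{\{1:i\}}|$ times $|y_{\{i+1:k\}}|$, which by Lemma \ref{nonconnected} is itself $|y_{\{1:k\}}|^{-1}|y_{\{1:k\}\setminus\{i+1:k-1\}}|$—but more simply, since $\eta^{(k)}$ restricted to indices $\le i+1$ only sees $y_{\{1:i+1\}}$ by the tridiagonal factorization, one has $\eta^{(k)}_{ii} = \eta^{(i+1)}_{ii}$ and $\eta^{(k)}_{i,i+1} = \eta^{(i+1)}_{i,i+1}$ for all $k \ge i+1$. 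This last observation is in fact the conceptual heart: the $2\times2$ block of $(y_{\{1:k\}})^{-1}$ in rows/columns $i, i+1$ is independent of $k$ for $k \ge i+1$, because inverting a tridiagonal matrix, the $(i,i)$, $(i,i+1)$, $(i+1,i+1)$ entries depend only on the leading block through index $i+1$ — this is a standard continued-fraction fact for Jacobi matrices.

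Once that reduction is made, $D_i^{k,m} = D_i^{i+1,i+1}$, and $D_i^{i+1,i+1}$ is literally $|\eta_{\{i,i+1\}}|$ computed inside the smaller cone $P_{A_{i+1}}$ (the matrix $y_{\{1:i+1\}}$ playing the role of $y$), so Lemma \ref{determinant} applied to $y_{\{1:m\}}$ — or rather the version of that lemma's computation carried out with $y_{\{1:m\}}$ in place of $y$ — yields $D_i^{k,m} = |y_{\{1:m\}}|^{-1}\,|y_{\{1:m\}\setminus\{i,i+1\}}|$. I would write this out directly: expanding the $2\times2$ determinant with $\eta^{(m)}_{ii}, \eta^{(m)}_{i,i+1}, \eta^{(m)}_{i+1,i+1}$ (all entries of $(y_{\{1:m\}})^{-1}$) reproduces exactly the bracket computation at the end of the proof of Lemma \ref{determinant}, whose bracketed factor was shown to equal the relevant determinant. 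Part (2) then follows by the reflection symmetry. The main obstacle I anticipate is purely bookkeeping: keeping straight which leading/trailing submatrix each cofactor refers to and invoking Lemma \ref{nonconnected} with the correct index ranges (one needs $i-1 < i < i+1 < i+2$, with the degenerate cases $i=1$ handled by the convention $\det y_\emptyset = 1$); there is no new idea needed beyond the $k$-independence observation, which itself is immediate from Lemma \ref{nonconnected}.
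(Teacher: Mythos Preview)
Your opening plan---substitute the four cofactor expressions and simplify---is correct and is exactly what the paper's one-line proof intends (``similar to the proof of Lemma~\ref{determinant}; instead of $y$ use $y_{\{1:k\}}$''). The problem is the step you call the ``conceptual heart'': the claim that $\eta^{(k)}_{ii}=\eta^{(i+1)}_{ii}$ and $\eta^{(k)}_{i,i+1}=\eta^{(i+1)}_{i,i+1}$ for all $k\ge i+1$ is false. Take $y=\left(\begin{smallmatrix}2&1&0\\1&2&1\\0&1&2\end{smallmatrix}\right)$ and $i=1$: then $\eta^{(2)}_{11}=2/3$ but $\eta^{(3)}_{11}=3/4$. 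The inverse of a tridiagonal matrix is not local; its entries depend on the whole matrix. Hence the reduction $D_i^{k,m}=D_i^{i+1,i+1}$ fails (and could not work anyway, since the right-hand side of \eqref{detElargi} genuinely depends on $m$).

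What is true is that the $k$-dependence cancels \emph{in the determinant}, not entrywise, and this comes out of the direct computation you abandoned. Correcting the indices in your displayed bracket (the first term should carry $|y_{\{1:i\}}|$, not $a=|y_{\{1:i-1\}}|$, and the second $|y_{\{i+2:m\}}|$, not $|y_{\{i+1:m\}}|$), one finds
\[
D_i^{k,m}=\frac{|y_{\{1:i-1\}}|\,|y_{\{i+2:m\}}|}{|y_{\{1:k\}}|\,|y_{\{1:m\}}|}
\Bigl(|y_{\{1:i\}}|\,|y_{\{i+1:k\}}|-y_{i,i+1}^{2}\,|y_{\{1:i-1\}}|\,|y_{\{i+2:k\}}|\Bigr).
\]
The bracket is precisely $|y_{\{1:k\}}|$ (the tridiagonal determinant expanded across the bond $i\!-\!(i{+}1)$, exactly the identity used at the end of the proof of Lemma~\ref{determinant}), so it cancels the $|y_{\{1:k\}}|$ downstairs. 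What remains is $|y_{\{1:i-1\}}|\,|y_{\{i+2:m\}}|/|y_{\{1:m\}}|$, which equals $|y_{\{1:m\}\setminus\{i,i+1\}}|/|y_{\{1:m\}}|$ by Lemma~\ref{nonconnected}. That finishes part~1; part~2 is the mirror argument as you said.
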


\begin{proof}
 Similar to the proof of Lemma \ref{determinant}; instead of $y$ use $y_{\{1:k\}}$ or $y_{\{k:n\}}$.
\end{proof}

\begin{proof} (of  Proposition  \ref{delta_m_s})
We will deal with $\delta^{(M)}_{\us}(m^{(M)}_{\us}(y))=\delta^{\prec_M}_{\us}(m^{(M)}_{\us}(y))$ where the order $\prec_M$
was defined in (\ref{order0}).
By  formula (\ref{msM}) 
  and by the definition of $\delta^{\prec_M}_{\us}$ we obtain {that} $\delta^{\prec_M}_{\us}(m_{\us}(y))$ equals
$$
\prod_{i=1}^{M-1} \left(  \frac1{c_i}
            \begin{vmatrix}
             x_i+a_i&b_i\\ b_i&c_i
            \end{vmatrix}  \right)^{s_i} 
						(s_M  \eta^{(n)}_{MM})^{s_M} 
						\prod_{i=M+1}^{n} \left(  \frac1{c'_i}
						\begin{vmatrix}
             x'_i+a'_i&b'_i\\ b'_i&c'_i
            \end{vmatrix}  \right)^{s_i},
$$
where $x_i=(s_{i}-s_{i+1})  \eta^{(i)}_{ii},\,\, a_i=\sum_{k=i+1}^{M-1} (s_{k}-s_{k+1})  \eta_{ii}^{(k)}+ s_M \eta_{ii}^{(n)}$,
\begin{eqnarray*}
&& 
 b_i= \sum_{k=i+1}^{M-1} (s_{k}-s_{k+1})  \eta_{i,i+1}^{(k)}+ s_M \eta_{i,i+1}^{(n)},\\
&&c_i=\sum_{k=i+1}^{M-1} (s_{k}-s_{k+1})  \eta_{i+1,i+1}^{(k)} + s_M \eta_{i+1,i+1}^{(n)},\\
&&a'_i= \sum_{k=M+1}^{i-1} (s_{k}-s_{k-1})  \eta_{ii}^{[k]}+ s_M \eta_{ii}^{[1]},\\
&&b'_i=\sum_{k=M+1}^{i-1} (s_{k}-s_{k-1})  \eta_{i,i-1}^{[k]}+ s_M \eta_{i,i-1}^{[1]},\\
&&c'_i=  \sum_{k=M+1}^{i-1} (s_{k}-s_{k-1})  \eta_{i-1,i-1}^{[k]} + s_M \eta_{i-1,i-1}^{[1]},
\end{eqnarray*}
{and  $x'_i=(s_{i}-s_{i-1})  \eta^{[i]}_{ii}$}.
Let us first compute the factors $
            \begin{vmatrix}
             x_i+a_i&b_i\\ b_i&c_i
            \end{vmatrix}/c_i$  for $i=1,\ldots, M-1$.
						We will show that
				\begin{equation}\label{firstFactor}
					\frac1{c_i}	\begin{vmatrix}
             x_i+a_i&b_i\\ b_i&c_i
            \end{vmatrix} = s_i\eta_{ii}^{(i)},\ \ i=1,\ldots, M-1.
				\end{equation}
						We have $
  {\displaystyle \frac1{c_i}
            \begin{vmatrix}
             x_i+a_i&b_i\\ b_i&c_i
            \end{vmatrix}= x_i+ \frac1{c_i}
            \begin{vmatrix}
             a_i&b_i\\ b_i&c_i
            \end{vmatrix},}
  $
so in order to prove (\ref{firstFactor}), it is sufficient to prove that
\begin{equation}\label{CLE}
 \frac1{c_i}
            \begin{vmatrix}
             a_i&b_i\\ b_i&c_i
            \end{vmatrix}= s_{i+1}  \eta^{(i)}_{ii}.
\end{equation}
In order to prove (\ref{CLE}), we first use the multilinearity of the determinant with respect to its columns and we write,
using the notation $D_i^{k,m}$ from Lemma \ref{determinantElargi},
\begin{eqnarray*}
            \begin{vmatrix}
             a_i&b_i\\ b_i&c_i
            \end{vmatrix}
            &=&\sum_{k,m=i+1}^{M-1} (s_{k}-s_{k+1})(s_{m}-s_{m+1}) D_i^{k,m} + s_M \sum_{k=i+1}^{M-1} (s_{k}-s_{k+1}) D_i^{k,n}\\
						&+& s_M \sum_{m=i+1}^{M-1} (s_{m}-s_{m+1}) D_i^{n,m} + s_M^2  D_i^{n,n}.					
\end{eqnarray*}						

By {Part 1 of Lemma \ref{determinantElargi}} we have
$D_i^{k,m}= |y_{\{1:m\}}|^{-1}\vert y_{\{1:m\}\backslash\{i,i+1\}} \vert$, which is independent of the left index $k$. 
The last fact allows to write
\begin{eqnarray*}
            \begin{vmatrix}
             a_i&b_i\\ b_i&c_i
            \end{vmatrix}
						&=&s_{i+1} \sum_{m=i+1}^{M-1} (s_{m}-s_{m+1}) D_i^{n,m} + s_{i+1} s_M  D_i^{n,n}
\\
            &=&s_{i+1}\left(\sum_{m=i+1}^{M-1} (s_{m}-s_{m+1}) \frac{  \vert y_{\{1:m\}\backslash\{i,i+1\}}\vert}{|y_{\{1:m\}}|}
					+s_M	\frac{  \vert y_{\{1:n\}\backslash\{i,i+1\}}|}{|y|}
						\right).
\end{eqnarray*}	
We factorize the determinants $\vert y_{\{1:m\}\backslash\{i,i+1\}} \vert$ and
$\vert y_{\{1:n\}\backslash\{i,i+1\}} \vert$
 in the last sum according to Lemma
\ref{nonconnected} and we write this sum as
$$
\frac{|y_{\{ 1:i-1\}}|}{|y_{\{ 1:i\}}|} \left(\sum_{m=i+1}^{M-1} (s_{m}-s_{m+1})  \frac{ |y_{\{ 1:i\}}| |y_{\{ i+2:m\}}|}{|y_{\{1:m\}}|}
+ s_M \frac{ |y_{\{ 1:i\}}| |y_{\{ i+2:n\}}|}{|y|}\right).
$$
We have $ |y_{\{1:m\}}|^{-1} |y_{\{ 1:i\}}| |y_{\{ i+2:m\}}|=\eta^{(m)}_{i+1,i+1}$.	
By definition of $c_i$ we finally obtain
$$  \begin{vmatrix} a_i&b_i\\ b_i&c_i \end{vmatrix}
= s_{i+1}\frac{|y_{\{ 1:i-1\}}|}{|y_{\{ 1:i\}}|}c_i = s_{i+1}  \eta^{(i)}_{ii}c_i $$
and formulas (\ref{CLE}) and (\ref{firstFactor}) are proved.
		
A "mirror" proof based on {Part 2 of Lemma  \ref{determinantElargi}}  shows that
\begin{equation}\label{newFactor}
\frac1 {c'_i} 	\begin{vmatrix}x'_i+a'_i&b'_i\\ b'_i&c'_i \end{vmatrix}   
= s_i\eta_{ii}^{[i]},\ \ i=M+1,\ldots, n
\end{equation}
{and that}\ \  
$\delta^{(M)}_{\us}(m^{(M)}_{\us}(y)) 
= \prod_{i=1}^n s_i^{s_i} \prod_{i=1}^{M-1} ( \eta^{(i)}_{ii})^{s_i} ( \eta^{(n)}_{MM})^{s_M}
						\prod_{i=M+1}^{n} ( \eta^{[i]}_{ii})^{s_i}.$\\
Recall that
$$ \eta^{(i)}_{ii}=\frac{|y_{\{1:i-1\}}|}{|y_{\{1:i\}}|},\ \ \ 
   \eta^{[i]}_{ii}=\frac{|y_{\{i+1:n\}}|}{|y_{\{i:n\}}|},\ \ \  \eta^{(n)}_{MM}=\frac{|y_{\{1:M-1\}}||y_{\{M+1:n\}}|}{|y|},
$$
so we deduce, using formula (\ref{P(M)}) that
\begin{eqnarray*}
&&	\prod_{i=1}^{M-1} ( \eta^{(i)}_{ii})^{s_i} ( \eta^{(n)}_{MM})^{s_M}
 \prod_{i=M+1}^{n} ( \eta^{[i]}_{ii})^{s_i}
=\Delta^{(M)}_{-\us}(y).
\end{eqnarray*}
  Applying Theorem \ref{delta-Delta}, we see that $\delta^{(M)}_{\us}(m^{(M)}_{\us}(y))= \prod_{i=1}^n s_i^{s_i} \delta_{\us}^{(M)}(\pi(y^{-1})).$
\end{proof}
{
\begin{proof} (of Theorem \ref{inverseMean}).
By   Proposition   \ref{delta_m_s}  and   formula (\ref{meanOne}) we have for $y\in P_G$
$$
m^{(M)}_{\us}(y)= -{\rm grad}_y \log  \delta^{(M)}_{\us}(m^{(M)}_{\us}(y))= {\rm grad}_y f(y),
$$
where
$$
                  f(y)= -\log  \delta^{(M)}_{\us}(m^{(M)}_{\us}(y)).
$$
We know that $m^{(M)}_{\us}: y \mapsto m$  is a diffeomorphism. Our goal is to investigate the inverse map
$$
\psi(m)= (m^{(M)}_{\us})^{-1}(m)=  ({\rm grad} f)^{-1}(m).
$$
Using Legendre-Fenchel transform methods,  we get
$$
\psi(m)=   ({\rm grad} f)^{-1}(m)=  {\rm grad} g(m),
$$
where the function $g$ is defined by
$$
g(m)=\langle m, y \rangle - f(m)\ \ {\rm {with}}\ \ m(y)= {\rm grad} f(y).
$$
Thus {$g(m)=\langle m, y \rangle + \log  \delta^{(M)}_{\us}(m)$, so that}
$$
g(m^{(M)}_{\us}(y))=\langle m^{(M)}_{\us}(y), y \rangle + \log  \delta^{(M)}_{\us}(m^{(M)}_{\us}(y)).
$$
We use now Part 2 of Proposition \ref{LEMkappa}. It follows that
$$
\psi(m)={\rm grad} g(m)= {\rm grad}_m \log  \delta^{(M)}_{\us}(m).
$$
\end{proof}
}


\begin{cor}\label{invmeanexplicit}
 The inverse mean map  $\psi_{\us}^{(M)}: Q_G \rightarrow P_G$ is given by
\begin{eqnarray}\label{inverse2}
 &&\psi_{\us}^{(M)}(m)= \sum_{k=1}^{M-1} s_k \left((m_{\{k:k+1\}})^{-1}\right)^0 \,+\, 
 \sum_{k=M+1}^{n} s_k \left((m_{\{k-1:k\}})^{-1}\right)^0 \nonumber
 \\
 & & \; - \; \sum_{k=2}^{M-1} s_{k-1} \left((m_{\{kk\}})^{-1}\right)^0 
 \,- \,(s_{M-1}- s_M+s_{M+1}) \left((m_{\{MM\}})^{-1}\right)^0 \nonumber\\
&& \qquad \,-\quad \sum_{k=M+1}^{n-1} s_{k+1} \left((m_{\{kk\}})^{-1}\right)^0. 
 \end{eqnarray}
\end{cor}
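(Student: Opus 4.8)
The plan is to derive \eqref{inverse2} from the abstract formula \eqref{inverse} of Theorem \ref{inverseMean} by a direct computation of $\grad_m \log \delta^{(M)}_{\us}(m)$ using the explicit product formula \eqref{delta(M)} for $\delta^{(M)}_{\us}$. The essential analytic input is the gradient rule $\grad_m \log |m_A| = \bigl((m_A)^{-1}\bigr)^0$, valid for any principal submatrix index set $A \subset V$, which is exactly the tool already invoked in the proof of Proposition \ref{PROPmsM}; here $A$ will run over the two-element sets $\{i,i+1\}$ and the singletons $\{i\}$ appearing in \eqref{delta(M)}.

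First I would take the logarithm of \eqref{delta(M)}, obtaining
\begin{align*}
\log \delta^{(M)}_{\us}(m) &= \sum_{i=1}^{M-1} s_i \log|m_{\{i:i+1\}}| + \sum_{i=M+1}^{n} s_i \log|m_{\{i-1:i\}}|\\
&\quad - \sum_{i=2}^{M-1} s_{i-1}\log m_{ii} - (s_{M-1}-s_M+s_{M+1})\log m_{MM} - \sum_{i=M+1}^{n-1} s_{i+1}\log m_{ii}.
\end{align*}
Then I would apply $\grad_m$ term by term, using $\grad_m \log|m_{\{i:i+1\}}| = \bigl((m_{\{i:i+1\}})^{-1}\bigr)^0$ and $\grad_m \log m_{ii} = \bigl((m_{\{ii\}})^{-1}\bigr)^0$ (the latter being simply $m_{ii}^{-1}$ placed in the $(i,i)$ slot). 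Reindexing the first two sums by $k=i$ gives the $\sum_{k=1}^{M-1} s_k$ and $\sum_{k=M+1}^n s_k$ terms of \eqref{inverse2}, while the three negative sums reproduce the $-\sum_{k=2}^{M-1} s_{k-1}$, the $-(s_{M-1}-s_M+s_{M+1})$, and the $-\sum_{k=M+1}^{n-1} s_{k+1}$ terms. One should note that the gradient here is taken with respect to the scalar product $\langle y,\eta\rangle = \tr(y\eta)$ on $I_G$, so that the result lands naturally in $Z_G$-coordinates and the $(\cdot)^0$ notation records the pattern of obligatory zeros; this matches the convention fixed just before Proposition \ref{PROPmsM}.

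The only genuine subtlety — and thus the main point to be careful about rather than a real obstacle — is bookkeeping with the index ranges near the distinguished vertex $M$: one must check that the coefficient of $\bigl((m_{\{MM\}})^{-1}\bigr)^0$ really collapses to $-(s_{M-1}-s_M+s_{M+1})$, and that the boundary cases $M=1$ and $M=n$ (where, as remarked after Definition \ref{M-powers}, the denominator of \eqref{delta(M)} has $n-1$ rather than $n-2$ factors) are consistent with the stated formula, the empty sums being interpreted as zero. Once these index checks are done, \eqref{inverse2} follows immediately, and one may additionally remark that, by Theorem \ref{inverseMean} together with the bijection $P_G \to Q_G$, $y \mapsto \pi(y^{-1})$, Corollary \ref{invmeanexplicit} yields an explicit family of ``Lauritzen-type'' formulas inverting that bijection, one for each admissible $\us$.
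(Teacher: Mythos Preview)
Your proposal is correct and follows exactly the paper's own approach: the paper's proof of Corollary \ref{invmeanexplicit} consists of two sentences stating that one computes $\grad_m\log\delta_{\us}^{(M)}(m)$ via \eqref{inverse} and \eqref{delta(M)}, and your write-up simply spells out that computation in detail, including the gradient rule $\grad_m\log|m_A|=\bigl((m_A)^{-1}\bigr)^0$ and the index bookkeeping near $M$.
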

\begin{proof}
 The result is obtained by computing the gradient of $\log\delta_{\us}^{(M)}(m)$, as indicated in  \eqref{inverse}.
We use the formula \eqref{delta(M)}.
 \end{proof}
The {Lauritzen} formula \cite[]{lauritzen1996} is an explicit formula for a bijection between $Q_G$ and $P_G$.
It states that for all $x\in Q_G$, the unique $y\in P_G$ such that $\pi(y^{-1})= x$ is given by 
 \begin{equation}\label{Lauritzen}
   y = \sum_{i=1}^{n-1} (x_{\{i:i+1\}}^{-1})^0 - \sum_{i=2}^{n-1} (x_{ii}^{-1})^0.
 \end{equation}
Setting $s_1=\hdots = s_n=1$ in formula  \eqref{msM} for the mean function, we get 
\begin{equation}
 m_{{(1,\hdots,1)}}^{(M)}(y) = \pi(y^{-1})=x.
\end{equation}
Thus, 
\begin{equation}
 \psi_{{(1,\hdots,1)}}^{(M)}\left(x\right) = y
\end{equation}
is the {Lauritzen} formula. Indeed, for
 $s_1=\hdots=s_n=1$, formula \eqref{inverse2} gives 
 \begin{equation}\label{LauritzenBIS}
   \psi_{(1,\hdots,1)}^{(M)} (m) = \sum_{i=1}^{n-1} (m_{\{i:i+1\}}^{-1})^0 - \sum_{i=2}^{n-1} (m_{ii}^{-1})^0.
 \end{equation}
{ Thus we found a new proof of the Lauritzen formula, based on the observation that the Lauritzen map is the inverse mean map
for $\us={\bf 1}=(1,1,\ldots,1)$.
{At} the same time we find an infinite number  of explicit isomorphisms from $Q_G$ onto $P_G$, given by 
}
the inverse mean maps $\psi_{\us}^{(M)}$. It is an essential   generalization of the Lauritzen formula. 
Each map 
 $\psi_{\us}^{(M)}$ is a generalized Lauritzen map.

\subsection{Variance function}\label{VarianceSection}
  \subsubsection{Properties of lower-upper $M$-triangular matrices}
Here, we define and prove basic properties of  lower-upper $M$-triangular
 matrices, that we will denote by  LU(M). They are very important  in proofs of this section.
\begin{defn}
 A matrix $T$ is said to be an LU(M) triangular  matrix if for all $i < M$,
 $T_{ij}=  0$ if $j>i$ and  for all  $i> M$, $T_{ij}=  0$ if $i>j$.
  \end{defn}

In particular,
  $T$ is an LU(n) triangular  matrix if and only if it is lower triangular, 
 and $T$ is an LU(1) triangular  matrix if and only if it is upper  triangular.
An LU(M) triangular matrix $T$ is a succession of an $M \times M$ lower triangular matrix $L=T_{\{1:M\}}$ and an $(N-M)\times (N-M)$ 
 upper triangular matrix $U=T_{\{M:n\}}$ with diagonal term $T_{MM}$ in common. We write $T=s(L,U)$.\\
    
\begin{center}
\begin{tikzpicture}[scale=0.4]
  \node at (-1.3,-1.90) {$T=$}; \draw (0,0)--(0,-2)--(2,-2)--(0,0); \draw (1.50,-1.80)--(3.80,-1.80)--(3.80,-3.80)--(1.50,-1.80);
  \draw (2.8,-0.25) ellipse (0.25cm and 0.4 cm); \draw (0.75,-3.3) ellipse (0.25cm and 0.4cm); 
  \draw[dotted] (1.71,-1.90)--(4.8,-1.90); \node at (5.5,-1.90) {$T_{MM}$};\node at (0.9,-1.5) {$L$}; \node at (3.1,-2.5) {$U$};
  \draw (0,0.3)--(-0.3,0.3)--(-0.3,-3.83)--(0,-3.83);  \draw (3.80,0.3)--(4.1,0.3)--(4.1,-3.88)--(3.80,-3.88);
   \end{tikzpicture}
 \end{center}

\begin{pro}\label{ULmultiply}
\begin{enumerate}
 \item  $s(L,U)s(L',U')=s(LL',UU')$.
 \item \label{ULmultiply3}If $s(L,U)$ is invertible, then $\left(s(L,U)\right)^{-1}$ is also an LU(M) triangular matrix and\\
$\left(s(L,U)\right)^{-1}= s(L^{-1},U^{-1})$.
\item The set of $LU(M)$ triangular matrices is a group.
\end{enumerate}
\end{pro}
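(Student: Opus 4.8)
The plan is to read off and then exploit the block structure of an LU($M$) matrix relative to the partition $V=I_1\cup\{M\}\cup I_2$ with $I_1=\{1,\dots,M-1\}$ and $I_2=\{M+1,\dots,n\}$ (with the obvious conventions when $M=1$ or $M=n$). First I would observe that the defining conditions force $T_{ij}=0$ whenever $i<M<j$ or $j<M<i$, so that, apart from row $M$, both off-diagonal corners outside the blocks $\{1:M\}$ and $\{M:n\}$ vanish; hence, with $L=T_{\{1:M\}}$ and $U=T_{\{M:n\}}$, one has
\[
T=s(L,U)=\begin{pmatrix} T_{I_1} & 0 & 0\\ \ell & t & u\\ 0 & 0 & T_{I_2}\end{pmatrix},
\]
where $T_{I_1}$ is lower triangular, $T_{I_2}$ is upper triangular, $t=T_{MM}$, and $\ell$, $u$ are the pieces of row $M$ lying strictly left of, and strictly right of, column $M$. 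In particular $L$ is the lower triangular corner $\left(\begin{smallmatrix}T_{I_1}&0\\\ell&t\end{smallmatrix}\right)$, $U$ is the upper triangular corner $\left(\begin{smallmatrix}t&u\\0&T_{I_2}\end{smallmatrix}\right)$, and the only compatibility requirement for $s(\cdot,\cdot)$ to be well defined is $L_{MM}=U_{MM}$.

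For part (1) I would simply multiply two such block matrices. The product is again of the same shape, and a direct computation gives that its $\{1:M\}$-block is the product of the two $\{1:M\}$-blocks and its $\{M:n\}$-block is the product of the two $\{M:n\}$-blocks, using that products of lower (resp.\ upper) triangular matrices are lower (resp.\ upper) triangular and that the two candidate values $tt'$ for the shared diagonal entry agree. This is exactly $s(L,U)\,s(L',U')=s(LL',UU')$.

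For part (2) I would first compute $\det T$ from the Leibniz formula: a nonvanishing term needs $\sigma(i)\le i$ for $i<M$ and $\sigma(i)\ge i$ for $i>M$, and an elimination argument running inward from both ends forces $\sigma=\mathrm{id}$, whence $\det T=\bigl(\prod_{i<M}L_{ii}\bigr)\,t\,\bigl(\prod_{i>M}U_{ii}\bigr)$. So $T$ is invertible if and only if $L$ and $U$ are. Since inverses of triangular matrices are triangular with reciprocal diagonal entries, $(L^{-1})_{MM}=t^{-1}=(U^{-1})_{MM}$, so $s(L^{-1},U^{-1})$ is a well-defined LU($M$) matrix; part (1) then gives $s(L,U)\,s(L^{-1},U^{-1})=s(I,I)=I_n$ and likewise on the other side, so $T^{-1}=s(L^{-1},U^{-1})$ is LU($M$). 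Part (3) is then immediate: closure under products is part (1), $I_n=s(I,I)$ is LU($M$), associativity is inherited from matrix multiplication, and closure under inversion of the invertible members is part (2), so the invertible LU($M$) matrices form a subgroup of $GL_n$.

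Everything here is routine computation; the only places where I expect to need a little care are the bookkeeping at the shared index $M$ — verifying that the symbol $s(\cdot,\cdot)$ stays consistent after forming products and inverses, and in particular that the diagonal-compatibility condition is preserved — and the short permutation argument identifying $\det T$, which is the single genuinely non-mechanical (though still elementary) step.
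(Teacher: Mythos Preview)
Your proposal is correct and follows essentially the same approach as the paper: block matrix multiplication for Part~1, then Part~1 yields Part~2, and Parts~1--2 yield Part~3. The paper's own proof is a three-line sketch (``Part~1 is proved by block matrix multiplication. Part~2 is straightforward using Part~1. Part~3 follows from Parts~1 and~2''), so your explicit $3\times 3$ block description and determinant computation simply fill in details the paper leaves to the reader; note that the Leibniz argument can be shortened by observing that $T$ is block lower-triangular with respect to the partition $(I_1,\{M\}\cup I_2)$, giving $\det T=\det T_{I_1}\cdot\det U$ directly.
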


\begin{proof}
\begin{enumerate}
 Part 1 is proved by block matrix multiplication. Part 2 is straightforward using Part 1. Part 3 follows from Parts 1 and 2.
 \end{enumerate}
\end{proof}

\begin{lem}\label{triang}
Let {$S$ and $T$} be LU(M) triangular $n\times n$ matrices.
\begin{enumerate}
\item\label{triang1} 
\begin{enumerate}
\item
 Let $ A= K^0$ {with} $K=A_{\{1:k\}}$.
{If} $k\leq M-1$, {then}
$\transp{S}AT=
\left(\transp{S}_{\{1:k\}}KT_{\{1:k\}}\right)^0$.
\item Let $ B=K^0$ {with} $K=B_{\{k:n\}}$.
{If} $k\geq M+1$, {then}
$\transp{S}BT=\left(\transp{S}_{\{k:n\}}KT_{\{k:n\}}\right)^{{0}}.$
\end{enumerate}
\item \label{triang2} Let $A$ be {an} $n\times n$  matrix. 
Then 
$(TA\transp{S})_{\{1:i\}}= T_{\{1:i\}}A_{\{1:i\}} \transp{S}_{\{1:i\}}$ for $i \leq M-1$, and  
$(TA\transp{S})_{\{i:n\}}=T_{\{i:n\}}A_{\{i:n\}}\transp{S}_{\{i:n\}}$ for $i\geq M+1$.
\item \label{triang3}If $T$ is invertible, then 
\begin{enumerate}
 \item $(T_{\{1:k\}})^{-1}=(T^{-1})_{\{1:k\}}$ for all $k\leq M-1$;
 \item $(T_{\{k:n\}})^{-1}=(T^{-1})_{\{k:n\}}$ for all $k\geq M+1$.
\end{enumerate}
 \end{enumerate}
 
\end{lem}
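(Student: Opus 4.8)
The plan is to prove the three items of Lemma~\ref{triang} in order, using Proposition~\ref{ULmultiply} (the group structure of $LU(M)$ triangular matrices) and the block decomposition $T = s(L,U)$ as the main tools. The key observation underlying everything is that an $LU(M)$ matrix $T$, written in block form with respect to the splitting $V = \{1:M\} \cup \{M+1:n\}$ (or more finely around index $i$), has a block-triangular shape: the rows indexed by $\{1:i\}$ for $i \le M-1$ involve only columns in $\{1:i\}$ (since $T$ is lower triangular in that region), and dually the rows in $\{i:n\}$ for $i \ge M+1$ involve only columns in $\{i:n\}$. This is exactly what makes submatrix extraction commute with products.

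For Part~\ref{triang1}, take $A = K^0$ with $K = A_{\{1:k\}}$ and $k \le M-1$. Because $S$ and $T$ are $LU(M)$, their rows and columns indexed by $\{1:k\}$ only ``see'' indices in $\{1:k\}$ when multiplied against something supported on $\{1:k\} \times \{1:k\}$; concretely, write $S$ and $T$ in $2\times 2$ block form adapted to $\{1:k\} \cup \{k+1:n\}$, note that the blocks $S_{\{1:k\},\{k+1:n\}}$ and $T_{\{1:k\},\{k+1:n\}}$ vanish (as $k \le M-1$ forces upper-triangular entries in that row range to be zero), and carry out the block multiplication of ${}^tS A T$. Every block of the product except the top-left $\{1:k\}\times\{1:k\}$ block vanishes because $A$ is supported there, and the top-left block is ${}^tS_{\{1:k\}} K T_{\{1:k\}}$. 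The $B = K^0$ case with $k \ge M+1$ is the mirror image, using that the blocks $S_{\{k:n\},\{1:k-1\}}$ and $T_{\{k:n\},\{1:k-1\}}$ vanish by lower-triangularity in that row range. Part~\ref{triang2} is proved the same way but without assuming $A$ has any particular support: for $i \le M-1$, extracting the $\{1:i\}$ principal block of $TA{}^tS$ only picks up the $\{1:i\}$ rows of $T$ and $\{1:i\}$ columns of ${}^tS$, and by the block-triangular shape these equal $T_{\{1:i\}}$ and ${}^tS_{\{1:i\}}$ respectively, giving $(TA{}^tS)_{\{1:i\}} = T_{\{1:i\}} A_{\{1:i\}} {}^tS_{\{1:i\}}$; the $i \ge M+1$ case is dual.

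For Part~\ref{triang3}, one route is direct: by Proposition~\ref{ULmultiply}\eqref{ULmultiply3}, $T^{-1}$ is again $LU(M)$ with $T^{-1} = s(L^{-1}, U^{-1})$ where $T = s(L,U)$; since $L = T_{\{1:M\}}$ and $L^{-1} = (T^{-1})_{\{1:M\}}$, and since for an ordinary lower-triangular matrix $L$ the inverse of a leading principal submatrix is the corresponding leading principal submatrix of the inverse, we get $(T_{\{1:k\}})^{-1} = (L_{\{1:k\}})^{-1} = (L^{-1})_{\{1:k\}} = (T^{-1})_{\{1:k\}}$ for $k \le M-1$; the $k \ge M+1$ statement follows by the analogous fact for the upper-triangular part $U$. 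Alternatively, Part~\ref{triang3} is an immediate corollary of Part~\ref{triang2}: apply Part~\ref{triang2} with $A = T^{-1}$ and $S = T$ (so $TA\,{}^tS$ is not quite identity, so instead apply it with the multiplicative version), or more cleanly, since $T \cdot T^{-1} = I$ and both are $LU(M)$, block-multiply $(T T^{-1})_{\{1:k\}} = T_{\{1:k\}} (T^{-1})_{\{1:k\}} = I_{\{1:k\}}$, which gives $(T^{-1})_{\{1:k\}} = (T_{\{1:k\}})^{-1}$ for $k \le M-1$, and dually for $k \ge M+1$.

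The main obstacle, such as it is, is purely bookkeeping: one must be careful that the vanishing-block claims are stated for the correct row/column index ranges and that the cutoff $k \le M-1$ versus $k \ge M+1$ (as opposed to $\le M$ or $\ge M$) is respected — the diagonal index $M$ itself is shared between the $L$ and $U$ parts, so extracting a block straddling $M$ does not simply factor, which is why the hypotheses exclude it. No genuine difficulty arises beyond writing the $2\times 2$ or $3\times 3$ block multiplications cleanly; the group structure from Proposition~\ref{ULmultiply} does all the structural work.
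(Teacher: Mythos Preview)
Your proposal is correct and follows essentially the same approach as the paper: block matrix multiplication exploiting the block-triangular shape of $LU(M)$ matrices with respect to the splitting $\{1:k\}\cup\{k+1:n\}$ (for $k\le M-1$) or $\{1:k-1\}\cup\{k:n\}$ (for $k\ge M+1$). The paper packages the triangular-matrix facts you invoke inline --- that $UAL=(U_{\{1:k\}}KL_{\{1:k\}})^0$, that $(LMU)_{\{1:i\}}=L_{\{1:i\}}M_{\{1:i\}}U_{\{1:i\}}$, and that $(T_{\{1:k\}})^{-1}=(T^{-1})_{\{1:k\}}$ for ordinary triangular $T$ --- into a separate Appendix lemma (Lemma~\ref{a:triang}), and then cites that lemma together with Proposition~\ref{ULmultiply} for Part~\ref{triang3}; you do the same computations directly. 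Your alternative route~(b) for Part~\ref{triang3}, reading off $(T^{-1})_{\{1:k\}}=(T_{\{1:k\}})^{-1}$ from the $\{1:k\}$ block of $T\cdot T^{-1}=I$, is a minor streamlining that avoids the detour through $T=s(L,U)$, but otherwise there is no substantive difference.
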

\begin{proof}
 Part 1 is straightforward using block matrix multiplication and Part 1 of Lemma \ref{a:triang} in Appendix;
 for Part 2 just imagine which lines and columns intervene in the computation); Part 3
 follows from Part \ref{ULmultiply3} of Proposition \ref{ULmultiply} and Part 3 of Lemma \ref{a:triang}.
\end{proof}

\begin{pro}
 For all $y\in P_{A_n}$, for all $1\leq M\leq n$, there exists an LU(M) triangular matrix $T$ satisfying $T_{ij}=0$ if $i\not\sim j$ and such that $y={T\, \transp{T}}$.
\end{pro}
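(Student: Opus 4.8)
The plan is to construct $T$ recursively in $n$, using the two recurrent factorizations of $P_{A_n}$ from Proposition \ref{changeOfVar} and the block structure of $LU(M)$ matrices established in Proposition \ref{ULmultiply} and Lemma \ref{triang}. For $n=1$ the statement is trivial: $y = (\sqrt{y})(\sqrt{y})$ and a $1\times 1$ matrix is simultaneously $LU(M)$ for the only admissible $M$. For the inductive step I distinguish two cases according to whether $M\geq 2$ or $M=1$, because the first recurrent construction $\Phi_n$ peels off the vertex $1$ while preserving the property ``$M$ is the distinguished index'' only when $M\geq 2$, and the second construction $\tilde\Phi_n$ peels off the vertex $n$ and handles the remaining case $M=1$ (equivalently one can just use $\tilde\Phi_n$ whenever $M\leq n-1$).

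First I would treat $M\geq 2$. Write $y=\Phi_n(a,b,z)$ with $a=y_{11}>0$, $b=y_{12}/y_{11}$ and $z\in P_{A_{n-1}}$ (on the vertex set $\{2,\dots,n\}$, which is an $A_{n-1}$ graph). By the induction hypothesis applied to $z$ and to the index $M$ (viewed inside $\{2,\dots,n\}$), there is an $LU(M)$ triangular matrix $T'$ of size $n-1$ with $T'_{ij}=0$ whenever $i\not\sim j$ and $z = T'\,\transp{T'}$. Now set
\begin{equation*}
T = A(b)\begin{pmatrix} \sqrt{a} & 0 & \cdots & 0\\ 0 \\ \vdots & & T' \\ 0\end{pmatrix},
\end{equation*}
where $A(b)$ is the lower-triangular matrix from Proposition \ref{changeOfVar}. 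Then $T\,\transp{T}=A(b)\,\mathrm{diag}(a,z)\,\transp{A}(b)=y$ by \eqref{yij}. The matrix $A(b)$ differs from the identity only in the $(2,1)$ entry, which is a lower-triangular position, so multiplying the block-diagonal $LU(M)$ matrix $\mathrm{diag}(\sqrt a,T')$ on the left by $A(b)$ keeps it lower-triangular in the first $M$ rows and does not touch the upper part; hence $T$ is again $LU(M)$ triangular. The sparsity pattern is also preserved: the only new nonzero off-diagonal entry of $T$ relative to $\mathrm{diag}(\sqrt a, T')$ is in position $(2,1)$, and $1\sim 2$ in $A_n$, while all entries inherited from $T'$ respect the edge set of $A_{n-1}\subset A_n$; so $T_{ij}=0$ whenever $i\not\sim j$.

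The case $M=1$ (or more generally $M\leq n-1$) is the mirror image: write $y=\tilde\Phi_n(a,b,z)$, apply induction to $z\in P_{A_{n-1}}$ on the vertex set $\{1,\dots,n-1\}$ with the same index $M$, obtain $T'$ which is $LU(M)$ triangular of size $n-1$ with the right sparsity, and set $T = \transp{B}(b)\,\mathrm{diag}(T',\sqrt a)$, using $B(b)$ from Proposition \ref{changeOfVar}. Since $B(b)$ differs from the identity only in the $(n,n-1)$ position, $\transp{B}(b)$ differs from it only in the $(n-1,n)$ position, which is an upper-triangular spot, so left-multiplication preserves the $LU(M)$ property for $M\le n-1$; and $n-1\sim n$ ensures the sparsity pattern is respected. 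Then $T\,\transp{T}=\transp{B}(b)\,\mathrm{diag}(z,a)\,B(b)=y$ by \eqref{tildeyij}. Since every $M\in\{1,\dots,n\}$ is covered by at least one of the two cases, the induction is complete.

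I do not expect a serious obstacle here; the one point that needs a little care is bookkeeping the two triangularity regimes simultaneously with the sparsity constraint, i.e.\ checking that the single extra off-diagonal entry introduced by $A(b)$ (resp.\ $\transp B(b)$) sits in an allowed position both for the $LU(M)$ shape and for the graph $A_n$. Both checks are immediate from the explicit form of $A(b)$ and $B(b)$ together with the facts $1\sim 2$ and $(n-1)\sim n$, so the argument goes through cleanly. (Alternatively, one could give a direct non-recursive proof by first block-diagonalizing $y$ via the $LU(M)$ change of basis that simultaneously performs Cholesky on $y_{\{1:M\}}$ from the bottom-right corner and on $y_{\{M:n\}}$ from the top-left corner, but the recursive argument is shorter and reuses the machinery already set up in Section \ref{ReccConstr}.)
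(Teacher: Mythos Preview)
Your proof is correct and follows essentially the same approach as the paper: both argue by induction on $n$, splitting into the case $M\geq 2$ (using $\Phi_n$ and the factor $A(b)$) and the mirror case $M=1$ (using $\tilde\Phi_n$ and $\transp{B}(b)$), and obtain the same explicit $T$. Your write-up is actually slightly more careful than the paper's in verifying the $LU(M)$ shape and the sparsity constraint after multiplying by $A(b)$ or $\transp{B}(b)$.
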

\begin{proof}
 We will proceed by induction on $n$.
The statement is obviously true for $n=1$.
Let us assume that the statement is true for $n-1$.
  Let $y\in P_{A_n}$ and $M\neq 1$.
  Let us write $y=\Phi_n(a,b,z)$ with $z\in P_{A_{n-1}}$. The induction assumption implies that there exists
  $V$ an $(n-1)\times (n-1)$ LU(M) triangular matrix such that $V_{ij}=0$ if $i\not\sim j$ and such that $z=V\,\transp{V}$.  
  Let us write 
  \[T=
\begin{pmatrix}
 1 \\
 b & 1\\
 \vdots & & \ddots\\
 0&\ldots& 0&1
\end{pmatrix}
\begin{pmatrix}
 \sqrt{a} & 0&\hdots & 0\\
 0\\
 \vdots & & {V}\\
 0
\end{pmatrix}
=\begin{pmatrix}
 \sqrt{a} & 0&\hdots & 0\\
 \sqrt{a}b\\
 \vdots & & {V}\\
 0
\end{pmatrix}.\]
$T$ is LU(M) triangular satisfying $T_{ij}=0$ if $i\not\sim j$ and $y=T\, \transp{T}$.

For $M=1$, we use $y=\tilde \Phi_n(a,b,z)$ with $z\in P_{A_{n-1}}$. 
\end{proof}

\subsubsection{ Two formulas for the variance function}

Let $m\in Q_G$.
We note  $\hat m\in S^+_n$ the unique symmetric positive definite matrix verifying
${\pi}(\hat m)=m,\ \ \hat m^{-1}\in P_G.$
Define $y=\psi_{\us}^{(M)}(m)\in P_G$. 
Decompose $y = {T\, \transp{T}}$, with $T$ an LU(M) triangular matrix such that  $T_{ij}=0$ when  $i\not\sim j$.
\begin{lem}\label{hatm}
We have
\begin{equation}
\hat m=\;\transp{T}^{-1} \begin{pmatrix}
 s_1&\ldots&0\\&\ddots&\\ 0&\ldots&s_n\end{pmatrix}
T^{-1}. 
\end{equation}
\end{lem}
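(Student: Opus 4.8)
The plan is to exhibit the matrix $N:=\transp{T}^{-1}\,\mathrm{diag}(s_1,\dots,s_n)\,T^{-1}$ as a matrix enjoying the three defining properties of $\hat m$ — it is positive definite, its inverse lies in $P_G$, and its image under $\pi$ is $m$ — and then to conclude $\hat m=N$ from the uniqueness built into the definition of $\hat m$ (equivalently, from the bijectivity of $y\mapsto\pi(y^{-1})$ from $P_G$ onto $Q_G$). Write $S:=T^{-1}$. By Proposition~\ref{ULmultiply}(\ref{ULmultiply3}), $S$ is again an LU(M) triangular matrix, and since $T_{ij}=0$ whenever $i\not\sim j$ we have $y=T\transp{T}\in Z_{A_n}$. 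Note also $N=\transp{S}\,\mathrm{diag}(s_1,\dots,s_n)\,S$.

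For positive definiteness: on the parameter domain all $s_i$ are strictly positive ($s_i>\tfrac12$ for $i\neq M$ and $s_M>0$), and $S$ is invertible, so $N\in S_n^+$. For the inverse: $N^{-1}=T\,\mathrm{diag}(s_1^{-1},\dots,s_n^{-1})\,\transp{T}$, and a direct inspection of the LU(M) pattern of $T$ shows that for $|i-j|\ge 2$ the supports of the $i$-th and $j$-th rows of $T$ are disjoint; hence $T\,\mathrm{diag}(d)\,\transp{T}\in Z_{A_n}$ for every diagonal $\mathrm{diag}(d)$ (the case $d=(1,\dots,1)$ recovering $y$), so in particular $N^{-1}\in Z_{A_n}\cap S_n^+=P_G$.

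The substantive point is $\pi(N)=m$. Since $\psi_{\us}^{(M)}$ inverts the mean map, $m=m_{\us}^{(M)}(y)$, and Proposition~\ref{PROPmsM} writes this as $m=\pi(N_0)$ with
$$N_0:=\sum_{i=1}^{M-1}(s_i-s_{i+1})\bigl[(y_{\{1:i\}})^{-1}\bigr]^0+s_M\,y^{-1}+\sum_{i=M+1}^{n}(s_i-s_{i-1})\bigl[(y_{\{i:n\}})^{-1}\bigr]^0 ,$$
so it suffices to check $N_0=N$ as $n\times n$ matrices. Using Lemma~\ref{triang}(\ref{triang2}),(\ref{triang3}) I would rewrite, for $i\le M-1$, $y_{\{1:i\}}=T_{\{1:i\}}\transp{(T_{\{1:i\}})}$ and $(T_{\{1:i\}})^{-1}=S_{\{1:i\}}$, whence $(y_{\{1:i\}})^{-1}=\transp{(S_{\{1:i\}})}\,S_{\{1:i\}}$; because $S$ is LU(M), the rows $S_{k\bullet}$ with $k\le i$ are supported in $\{1,\dots,i\}$, so the zero-extension adds nothing and $\bigl[(y_{\{1:i\}})^{-1}\bigr]^0$ has $(a,b)$-entry $\sum_{k=1}^{i}S_{ka}S_{kb}$. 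Symmetrically $\bigl[(y_{\{i:n\}})^{-1}\bigr]^0$ has $(a,b)$-entry $\sum_{k=i}^{n}S_{ka}S_{kb}$ for $i\ge M+1$, and $y^{-1}=\transp{S}S$ has $(a,b)$-entry $\sum_{k=1}^{n}S_{ka}S_{kb}$. Substituting into $N_0$, interchanging the order of the double summations over $i$ and $k$, and telescoping the differences $s_i-s_{i+1}$ (resp. $s_i-s_{i-1}$) collapses everything to $\sum_{k=1}^{n}s_k\,S_{ka}S_{kb}$, which is precisely the $(a,b)$-entry of $N=\transp{S}\,\mathrm{diag}(s_1,\dots,s_n)\,S$. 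Thus $\pi(N)=\pi(N_0)=m$, and uniqueness of $\hat m$ gives $\hat m=N$.

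I expect the only real obstacle to be bookkeeping: one must justify carefully that the zero-extension operator $[\,\cdot\,]^0$ introduces no spurious off-block entries, i.e. that the genuine entries of $\transp{(S_{\{1:i\}})}S_{\{1:i\}}$ already vanish outside $\{1,\dots,i\}^2$ (and likewise for the $\{i:n\}$ blocks) — this is exactly where the LU(M) triangularity of $S$ enters — after which the interchange-of-summation and telescoping are routine.
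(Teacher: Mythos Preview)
Your proof is correct and follows essentially the same approach as the paper: both express the matrix appearing in the mean formula via the LU($M$) decomposition $y=T\transp{T}$, telescope the coefficients $s_i-s_{i\pm 1}$ to obtain $\transp{T}^{-1}\mathrm{diag}(s_1,\dots,s_n)T^{-1}$, and then verify that its inverse lies in $P_G$ to conclude by uniqueness of $\hat m$. The only cosmetic difference is that the paper packages the key identity as $[(y_{\{1:i\}})^{-1}]^0=\transp{T}^{-1}(I_{\{1:i\}})^0T^{-1}$ via Lemma~\ref{triang}(\ref{triang1}) and telescopes at the matrix level, while you carry out the same computation entrywise via the sums $\sum_{k}S_{ka}S_{kb}$; the support argument you flag as the ``only real obstacle'' is exactly what Lemma~\ref{triang}(\ref{triang1}) encodes.
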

\begin{proof}
Note that $y=\psi_{\us}^{(M)}(m)$ is equivalent to $m = m_{\us}^{(M)}(y)$.
The formula of the mean function 
\eqref{msM}    gives $m=\pi(Z)$, where
\begin{equation}
  Z=\sum_{i=1}^{M-1} (s_i-s_{i+1}) [(y_{\{1:i\}})^{-1}]^0 + s_M y^{-1}+
	\sum_{i=M+1}^{n} (s_{i}-s_{i-1}) [ (y_{\{i:n\}})^{-1}]^0.
\end{equation}
Using Part \ref{triang2} of  Lemma \ref{triang},
 we have 
$y_{\{1:i\}}=T_{\{1:i\}}I_{\{1:i\}}  \,^tT_{\{1:i\}}$ {for $i\leq M-1$}.
{By} Part \ref{triang3} of Lemma \ref{triang}, we get 
$(y_{\{1:i\}})^{-1}= \,^t(T^{-1})_{\{1:i\}}I_{\{1:i\}}(T^{-1})_{\{1:i\}}$.
Finally, using Part \ref{triang1} of Lemma \ref{triang}, we obtain
\begin{equation}\label{yi0}
 [(y_{\{1:i\}})^{-1}]^0 = \,^tT^{-1}(I_{\{1:i\}})^0T^{-1}, \quad {i \leq M-1}.
\end{equation}
Similarly, we have
\begin{equation}\label{yn0}
 [(y_{\{i:n\}})^{-1}]^0 = \,^tT^{-1}(I_{\{i:n\}})^0T^{-1}, \quad i\geq M+1. 
\end{equation}
Thus, 
\begin{align*}
Z&= \transp{T}^{-1}\left(\sum_{i=1}^{M-1} (s_i-s_{i+1})(I_{\{1:i\}})^0 \, + s_M I + \,\sum_{i=M+1}^{n} (s_{i}-s_{i-1}) ( I_{\{i:n\}})^0 \right)T^{-1}\\
&=\transp{T}^{-1} \begin{pmatrix}
 s_1&\ldots&0\\&\ddots&\\ 0&\ldots&s_n\end{pmatrix} T^{-1}.
\end{align*}
Therefore, $Z$ is positive definite and 
$
 Z^{-1}= T \begin{pmatrix}
 s_1^{-1}&\ldots&0\\&\ddots&\\ 0&\ldots&s_n^{-1}\end{pmatrix} \transp{T} \in P_{A_n}.
$
Indeed, for all $i<i+1<j$, {we have}
$(Z^{-1})_{ij}=\sum_{k=1}^n T_{ik}T_{jk}s_{k}^{-1}.$
Since $T_{ik}=0$ for $|k-i|>1$,
$(Z^{-1})_{ij}= T_{i,i-1}T_{j,i-1}s_{i-1}^{-1}+ T_{ii}T_{ji}s_{i}^{-1}+T_{i,i+1}T_{j,i+1}s_{i+1}^{-1}.$
But since  $|j-i|>1$, we have $T_{j,i-1}=0=T_{ji}$ and
$(Z^{-1})_{ij}= T_{i,i+1}T_{j,i+1}s_{i+1}^{-1}$.
Now since $T$ is LU(M), we have $T_{i,i+1}T_{j,i+1}=0$. In fact, $T_{i,i+1}=0$ for $i\leq M-1$ and $T_{j,i+1}=0$ for $i\geq M$.
In conclusion, we have shown that $m=\pi(Z)$ with $Z^{-1} \in P_{A_n}$, which implies $Z=\hat{m}$.
\end{proof}  

The following Proposition  derives the  formula  for the variance function $V(m)$ which, 
 for each fixed $m\in Q_G$ is a continuous operator
$V(m): Z_G\rightarrow {I_G}$  \cite[]{casalis1996}.
Recall that $\PP(A): Z_G \rightarrow {I_G}$ is the quadratic operator defined by $\PP(A)u={\pi}(AuA)$.
For $A, B \in S_n$, let 
$\PP(A,B)u=\frac{1}{2}{\pi}(AuB+BuA)$.
For all $m\in Q_G$ and $I \subset V$, we note 
\begin{equation}\label{MA}
M_I=[((\hat m^{-1})_I)^{-1}]^0.
\end{equation}
\begin{pro}\label{ProCompli}
	The variance function {$V(m)$} of a Wishart NEF on $Q_G$ is {equal to}
\begin{align}\label{varcomplic} 
{}& \, \sum_{i=1}^{M-1} (s_i-s_{i+1}) \PP\left( \sum_{j=1}^{i-1} \left(\frac1{s_j}-\frac1{s_{j+1}}\right)M_{\{1:j\}} + \frac1{s_i} M_{\{1:i\}}\right) \nonumber\\
&+ \,s_M \PP\left(  \frac{ \hat{m}}{s_M}+\sum_{j=1}^{M-1} \left(\frac1{s_j}-\frac1{s_{j+1}}\right){M}_{\{1:j\}} + 
\sum_{k=M+1}^{n} \left(\frac1{s_k}-\frac1{s_{k-1}}\right)M_{\{k:n\}}\right) \nonumber\\
&+ \, \sum_{i=M+1}^{n} (s_i-s_{i-1}) \PP\left(\frac1{s_i} M_{\{i:n\}}+ \sum_{j=i+1}^{n} \left(\frac1{s_j}-\frac1{s_{j-1}}\right)M_{\{j:n\}}\right). 
\end{align}
\end{pro}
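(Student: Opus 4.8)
\emph{Proof plan.} The plan is to recognise the variance function as $V(m)=v(\psi_{\us}^{(M)}(m))$, where $v$ is the covariance function already obtained in \eqref{covariance} (the variance function being, by definition, the covariance re-expressed through the mean parameter), and then to rewrite each of the three types of block occurring in \eqref{covariance} — namely $[(y_{\{1:i\}})^{-1}]^0$, $y^{-1}$ and $[(y_{\{i:n\}})^{-1}]^0$ — in terms of $\hat m$ and the matrices $M_I$ of \eqref{MA}. First I would fix $m\in Q_G$, put $y=\psi_{\us}^{(M)}(m)\in P_G$, and write $y=T\transp{T}$ with $T$ an LU(M) triangular matrix satisfying $T_{ij}=0$ whenever $i\not\sim j$, as in the Proposition preceding Lemma \ref{hatm}. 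The proof of Lemma \ref{hatm} already supplies $[(y_{\{1:i\}})^{-1}]^0=\transp{T}^{-1}(I_{\{1:i\}})^0T^{-1}$ for $i\le M-1$ (formula \eqref{yi0}), $[(y_{\{i:n\}})^{-1}]^0=\transp{T}^{-1}(I_{\{i:n\}})^0T^{-1}$ for $i\ge M+1$ (formula \eqref{yn0}), and $y^{-1}=\transp{T}^{-1}I\,T^{-1}$; Lemma \ref{hatm} itself gives $\hat m=\transp{T}^{-1}D\,T^{-1}$ with $D=\mathrm{diag}(s_1,\dots,s_n)$, whence $\hat m^{-1}=TD^{-1}\transp{T}$.

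The key auxiliary step will be to prove the companion formulas $M_{\{1:j\}}=\transp{T}^{-1}(D_{\{1:j\}})^0T^{-1}$ for $j\le M-1$ and $M_{\{k:n\}}=\transp{T}^{-1}(D_{\{k:n\}})^0T^{-1}$ for $k\ge M+1$, where $(D_{\{1:j\}})^0=\mathrm{diag}(s_1,\dots,s_j,0,\dots,0)$ and $(D_{\{k:n\}})^0=\mathrm{diag}(0,\dots,0,s_k,\dots,s_n)$. These should follow by applying the LU(M) calculus of Lemma \ref{triang} to the factorisation $\hat m^{-1}=TD^{-1}\transp{T}$: Part \ref{triang2} extracts $(\hat m^{-1})_{\{1:j\}}=T_{\{1:j\}}(D^{-1})_{\{1:j\}}\transp{T}_{\{1:j\}}$, Part \ref{triang3} inverts inside the block (using Part \ref{ULmultiply3} of Proposition \ref{ULmultiply} to know that $T^{-1}$ is again LU(M)), and Part \ref{triang1} reinstates the obligatory zeros; the lower-right case is the mirror argument.

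What is then left is pure linear algebra on diagonal matrices. I would record the telescoping identities $(I_{\{1:i\}})^0=\sum_{j=1}^{i-1}\bigl(\tfrac1{s_j}-\tfrac1{s_{j+1}}\bigr)(D_{\{1:j\}})^0+\tfrac1{s_i}(D_{\{1:i\}})^0$, the analogous one $(I_{\{i:n\}})^0=\tfrac1{s_i}(D_{\{i:n\}})^0+\sum_{j=i+1}^{n}\bigl(\tfrac1{s_j}-\tfrac1{s_{j-1}}\bigr)(D_{\{j:n\}})^0$, and $I=\tfrac1{s_M}D+\sum_{j=1}^{M-1}\bigl(\tfrac1{s_j}-\tfrac1{s_{j+1}}\bigr)(D_{\{1:j\}})^0+\sum_{k=M+1}^{n}\bigl(\tfrac1{s_k}-\tfrac1{s_{k-1}}\bigr)(D_{\{k:n\}})^0$, each checked by comparing diagonal entries and telescoping. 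Since $A\mapsto\transp{T}^{-1}A\,T^{-1}$ is linear, conjugating these identities and using the two previous paragraphs turns them into $[(y_{\{1:i\}})^{-1}]^0=\sum_{j=1}^{i-1}\bigl(\tfrac1{s_j}-\tfrac1{s_{j+1}}\bigr)M_{\{1:j\}}+\tfrac1{s_i}M_{\{1:i\}}$, the mirror identity for $[(y_{\{i:n\}})^{-1}]^0$, and $y^{-1}=\tfrac{\hat m}{s_M}+\sum_{j=1}^{M-1}\bigl(\tfrac1{s_j}-\tfrac1{s_{j+1}}\bigr)M_{\{1:j\}}+\sum_{k=M+1}^{n}\bigl(\tfrac1{s_k}-\tfrac1{s_{k-1}}\bigr)M_{\{k:n\}}$. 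Substituting these three expressions into \eqref{covariance} produces exactly \eqref{varcomplic}.

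The hard part will be the second paragraph: deriving $M_I=\transp{T}^{-1}(D_I)^0T^{-1}$ forces one to track the $Z_G$-zero pattern and the LU(M) block structure simultaneously through the three parts of Lemma \ref{triang}. Once those identities are in hand, the remainder is the elementary telescoping together with a single linear substitution, so this is the step I expect to be the main obstacle.
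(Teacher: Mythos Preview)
Your proposal is correct and follows essentially the same route as the paper: write $y=\psi_{\us}^{(M)}(m)=T\transp{T}$ with $T$ an LU($M$) matrix, use Lemma~\ref{triang} together with Lemma~\ref{hatm} to obtain $M_{\{1:j\}}=\transp{T}^{-1}(D_{\{1:j\}})^{0}T^{-1}$ and $M_{\{k:n\}}=\transp{T}^{-1}(D_{\{k:n\}})^{0}T^{-1}$, and then convert the blocks $[(y_{\{1:i\}})^{-1}]^{0}$, $y^{-1}$, $[(y_{\{i:n\}})^{-1}]^{0}$ into combinations of the $M_I$'s by telescoping before substituting into \eqref{covariance}. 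The only cosmetic difference is that the paper phrases the telescoping via the rank-one diagonals $e_i=\transp{T}^{-1}e_iT^{-1}$-conjugates (formulas \eqref{basei}, \eqref{basen}, \eqref{eM}), whereas you write the diagonal identities for $(I_{\{1:i\}})^{0}$ directly; the content is the same.
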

 \begin{proof}
  The variance function is given for all $m\in Q_{A_n}$ by $V(m)=v(\psi_{\us}^{(M)}(m))$, where $v(y)$ is given by
  \eqref{covariance}.   
Let $y=\psi_{\us}^{(M)}(m)=T\,^tT$, where $T$ is LU(M). 
From Lemma \ref{hatm}, we have
\[
 \hat m^{-1}=\; T \begin{pmatrix}
 s_1^{-1}&\ldots&0\\&\ddots&\\ 0&\ldots&s_n^{-1}\end{pmatrix}
\,^tT.
\] 

Using Lemma \ref{triang},
we get
\begin{equation}
 M_{\{1:i\}}=  \,^tT^{-1} \left(\mathop{diag}(s_1,\hdots, s_i)\right)^0  T^{-1}, \quad i\leq M-1
\end{equation}
and
\begin{equation}
 M_{\{i:n\}}=  \,^tT^{-1} \left(\mathop{diag}(s_i,\hdots, s_n)\right)^0  T^{-1}, \quad i\geq M+1.
\end{equation}
Thus,  for all\; $2\leq i\leq M-1$, {we have}
\begin{align}\label{basei}
 \frac{1}{s_1} M_1= \,^tT^{-1} e_1 T^{-1}, \quad  \frac{1}{s_i}( M_{\{1:i\}}- M_{1:i-1)}= \,^tT^{-1} e_i T^{-1}, \end{align}
 and for all  $n-1 \geq i \geq M+1$, {we have}
\begin{align}\label{basen}
 \frac{1}{s_n} M_n= \,^tT^{-1} e_n T^{-1}, \quad \frac{1}{s_i} (M_{\{i:n\}}-M_{i+1:n})= \,^tT^{-1} e_i T^{-1},
\end{align}
where $e_i$ is the matrix with $e_{ii}=1$ and $e_{ij}=0$ for all $i\neq j$.
Observing that 
$(I_{\{1:i\}})^0= \sum_{k=1}^i e_i \quad \textnormal{and}\; (I_{\{i:n\}})^0= \sum_{k=i}^n e_i,$
{and} using \eqref{yi0} and \eqref{basei}, {we obtain} for $i\leq M-1$
 \begin{align*}
  [(y_{\{1:i\}})^{-1}]^0  &= \,^tT^{-1}(I_{\{1:i\}})^0T^{-1} = \,^tT^{-1} \left(\sum_{k=1}^i e_i \right) T^{-1} = \sum_{k=1}^i \left( \,^tT^{-1}  e_i  T^{-1}\right)\\
 &= \frac{1}{s_1} M_{\{1\}} + \frac{1}{s_2} (M_{\{1:2\}}-M_{\{1\}})+ \hdots + \frac{1}{s_i}(M_{\{1:i\}}-M_{\{1:i-1\}})\\
  &= \left(\frac{1}{s_1}-\frac{1}{s_2} \right) M_{\{1\}} + \hdots + \left(\frac{1}{s_{i-1}}-\frac{1}{s_i} \right) M_{\{1:i-1\}} +  \frac{1}{s_i}M_{\{1:i\}}.
 \end{align*}
Similarly, using \eqref{yn0} and \eqref{basen}, we obtain for $i\geq M+1$,
\begin{align*}
 [(y_{\{i:n\}})^{-1}]^0 
 &= \frac{1}{s_i} M_{\{i:n\}}+ \left(\frac{1}{s_{i+1}}-\frac{1}{s_i} \right) M_{\{i+1:n\}} + \hdots + \left(\frac{1}{s_{n}}-\frac{1}{s_{n-1}} \right) M_{\{n\}}.
\end{align*}
We also observe that 
\begin{equation}\label{eM}
\,^tT^{-1}e_{M} T^{-1}=\frac{1}{s_M}\left(\hat{m}-M_{\{1:M-1\}}-M_{\{M+1:n\}}\right).
\end{equation}
Thus, by \eqref{basei}, \eqref{basen} and \eqref{eM}, {we get}
\begin{align}\label{y-1}
       y^{-1}&= \sum_{i=1}^{n} \,^tT^{-1}e_{i} T^{-1}
       = \sum_{i=1}^{M-1} \,^tT^{-1}e_{i} T^{-1} + \,^tT^{-1}e_{M} T^{-1} + \sum_{i=M+1}^{n} \,^tT^{-1}e_{i} T^{-1}\nonumber\\
 &= \frac{ \hat{m}}{s_M}+\sum_{j=1}^{M-1} \left(\frac1{s_j}-\frac1{s_{j+1}}\right){M}_{\{1:j\}} + 
\sum_{j=M+1}^{n} \left(\frac1{s_j}-\frac1{s_{j-1}}\right)M_{\{j:n\}}.
\end{align}
Substituting these expressions of $[(y_{\{1:i\}})^{-1}]^0$, $y^{-1}$ and $ [(y_{\{i:n\}})^{-1}]^0$\,  into $v(y)$ given by   \eqref{covariance}, we obtain the stated result.
 \end{proof}

We prove now a much simpler  formula for the variance function on $Q_G$, surprisingly  similar 
 to  the variance function on a homogeneous cone, in particular on the symmetric cone
 $S^+_n$ ({cf.} \cite{GIK}).
 
\begin{thm}\label{ThNice}
  The variance function of the Wishart exponential family $\gamma_{\us,y}^{(M)}$ is
 \begin{eqnarray} \label{mainVar}
& &V(m)= \left(\frac{1}{s_1}+\frac{1}{s_n}-\frac{1}{s_M}\right)\PP(\hat{m}) \\
& &\,+ \sum_{i=1}^{M-1} \left(\frac{1}{s_{i+1}}- \frac{1}{s_{i}}\right)\PP(\hat{m}-M_{\{1:i\}})
\,+\,\sum_{i=M+1}^{n} \left(\frac{1}{s_{i-1}}- \frac{1}{s_{i}}\right)\PP(\hat{m}-M_{\{i:n\}}), \nonumber 
\end{eqnarray}
where $M_{\{1:i\}}$ and $M_{\{i:n\}}$ are defined in \eqref{MA}.
\end{thm}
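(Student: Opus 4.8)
The plan is to evaluate the covariance formula \eqref{covariance} at $y=\psi_{\us}^{(M)}(m)$, expand it in a convenient system of symmetric matrices, and recognise the outcome as the right-hand side of \eqref{mainVar}, up to terms that turn out to be the zero operator despite entering the two expressions with different coefficients.

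First I would fix $m\in Q_G$ and take, as in Lemma \ref{hatm}, the decomposition $y=\psi_{\us}^{(M)}(m)=T\,\transp{T}$ with $T$ an $LU(M)$ triangular matrix satisfying $T_{ij}=0$ whenever $i\not\sim j$. I set $N_i:=\transp{T}^{-1}e_i\,T^{-1}$ for $i=1,\dots,n$, where $e_i$ is the $i$-th diagonal matrix unit; these are exactly the matrices handled in the proofs of Lemma \ref{hatm} and Proposition \ref{ProCompli}, from which one extracts $[(y_{\{1:i\}})^{-1}]^0=\sum_{k\le i}N_k$ for $i\le M-1$, $\ y^{-1}=\sum_{k=1}^n N_k$, $\ [(y_{\{i:n\}})^{-1}]^0=\sum_{k\ge i}N_k$ for $i\ge M+1$, and also $\hat m=\sum_l s_l N_l$, $\ M_{\{1:i\}}=\sum_{l\le i}s_l N_l$ $(i\le M-1)$, $\ M_{\{i:n\}}=\sum_{l\ge i}s_l N_l$ $(i\ge M+1)$, hence $\hat m-M_{\{1:i\}}=\sum_{l>i}s_l N_l$ and $\hat m-M_{\{i:n\}}=\sum_{l<i}s_l N_l$. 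Substituting these into \eqref{covariance} and into the right-hand side of \eqref{mainVar} and using $\PP(\sum_l a_l N_l)=\sum_l a_l^2\,\PP(N_l)+2\sum_{l<l'}a_l a_{l'}\,\PP(N_l,N_{l'})$, I obtain, for both quantities, an expression of the form $\sum_{j\le k}c_{jk}\,\PP(N_j,N_k)$ with the convention $\PP(N_j,N_j)=\PP(N_j)$.

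Next I would compare the coefficients. A telescoping bookkeeping — for each pair $(j,k)$ one records which interval-sums occurring in \eqref{covariance}, respectively \eqref{mainVar}, contain both $N_j$ and $N_k$, and then adds the corresponding weights — yields, for both expressions, $c_{jj}=s_j$, $\ c_{jk}=2\,s_{\max(j,k)}$ when $j\neq k$ and $j,k\le M$, and $c_{jk}=2\,s_{\min(j,k)}$ when $j\neq k$ and $j,k\ge M$. Thus the two coefficient families can disagree only on pairs with $j\le M-1$ and $k\ge M+1$, and for those $\PP(N_j,N_k)=0$: from $N_j u N_k=\transp{T}^{-1}e_j\bigl(T^{-1}u\,\transp{T}^{-1}\bigr)e_k\,T^{-1}$ and the fact that $T^{-1}$ is again $LU(M)$ (Proposition \ref{ULmultiply}), so that the $j$-th row of $T^{-1}$ is supported on $\{1,\dots,j\}$ and its $k$-th row on $\{k,\dots,n\}$, both $N_j u N_k$ and $N_k u N_j$ have nonzero entries only at positions $(p,q)$ with $|p-q|\ge k-j\ge 2$, hence lie in $\ker\pi$; so $\PP(N_j,N_k)u=\tfrac12\pi(N_j u N_k+N_k u N_j)=0$. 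Since the difference of the two sides of \eqref{mainVar} is therefore a combination of zero operators, \eqref{mainVar} follows.

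I expect the telescoping step to be the only real work: one must split into the regimes $j=k$, $\ j<k\le M$ and $\ M\le j<k$ (the regime $j<M<k$ being precisely the one neutralised by the vanishing just established) and, in each, verify that the weighted partial sums of the $s_i$ coming from \eqref{covariance} match those of the $1/s_i$, together with the prefactor $\tfrac1{s_1}+\tfrac1{s_n}-\tfrac1{s_M}$, coming from \eqref{mainVar}. This is elementary but has to be carried out with care. The conceptual point is that, relative to the symmetric-cone variance function of \cite{GIK}, the extra structure imposed by the graph lives entirely on the ``straddling'' index pairs and is killed by $\pi$, which explains why the simple formula \eqref{mainVar} agrees with the complicated one \eqref{varcomplic}.
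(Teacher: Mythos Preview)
Your proof is correct and proceeds along the same lines as the paper's: expand both sides bilinearly into a sum of $\PP(\,\cdot\,,\,\cdot\,)$-terms, match the non-straddling coefficients by telescoping, and kill the straddling ones via the observation that $\pi$ annihilates products supported off the tridiagonal. The only difference is organizational: the paper compares coefficients in the coarser system $\{\hat m,\,M_{\{1:i\}},\,M_{\{i:n\}}\}$---passing through formula \eqref{varcomplic} of Proposition \ref{ProCompli} and the intermediate expression \eqref{intermediaireVar}---whereas you work directly in the finer system $\{N_i\}_{i=1}^n$, which makes the telescoping slightly cleaner and lets you bypass \eqref{varcomplic} as an explicit stepping stone; your vanishing $\PP(N_j,N_k)=0$ for $j\le M-1<M+1\le k$ is precisely the paper's $\PP(M_{\{1:i\}},M_{\{k:n\}})=0$ unpacked through $M_I=\sum_{l\in I}s_lN_l$.
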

\begin{proof}
Using  $\PP(a-b)=\PP(a)+\PP(b)-2\PP(a,b)$, we see that \eqref{mainVar} is equivalent to 
 \begin{align} \label{intermediaireVar}
&{V(m)} \nonumber\\
&= \frac{1}{s_M}\PP(\hat{m}) \,+\,\sum_{i=1}^{M-1} \left(\frac{1}{s_{i+1}}- \frac{1}{s_{i}}\right)\PP(M_{\{1:i\}})
\,+\,\sum_{i=M+1}^{n} \left(\frac{1}{s_{i-1}}- \frac{1}{s_{i}}\right)\PP(M_{\{i:n\}})\nonumber\\
&\, - \;2\left(\sum_{i=1}^{M-1} \left(\frac{1}{s_{i+1}}- \frac{1}{s_{i}}\right)\PP(\hat{m},M_{\{1:i\}}) \,+\,\sum_{i=M+1}^{n} \left(\frac{1}{s_{i-1}}- \frac{1}{s_{i}}\right)\PP(\hat{m},M_{\{i:n\}})
\right). 
\end{align}
We show  that the right hand sides of \eqref{varcomplic} and \eqref{intermediaireVar} are the same.
Below, we expand \eqref{varcomplic} using  $\PP(a+b)=\PP(a)+\PP(b)+2\PP(a,b)$ and compute the coefficients in the expanded formula.
Note that for all $Z\in Z_G$, $\PP(M_{\{1:i\}},M_{\{k:n\}})Z=0$ for all $i\leq M-1$ and $k\geq M+1$, since ${Z_{\{1:i\},\{k:n\}}}=0$. 

For a fixed $r\leq M-1$, the coefficient of $\PP({M_{\{1:r\}}})$ is
\[ \frac{s_r-s_{r+1}}{s_r^2}+ \sum_{i=r+1}^{M-1}  (s_i-s_{i+1}) \left(\frac{1}{s_r}-\frac{1}{s_{r+1}} \right)^2
+s_M \left(\frac{1}{s_r}-\frac{1}{s_{r+1}} \right)^2
 = \frac{1}{s_{r+1}}-\frac{1}{s_r}.
\]
By a mirror argument, for a fixed $r\geq M+1$, the coefficient of $\PP({M_{\{r:n\}}})$ is
$ \frac{1}{s_{r-1}}-\frac{1}{s_r}$.
{On the other hand,} 
 the coefficient of $\PP(\hat{m})$ is $\frac{1}{s_M}$.

{For a fixed $r$,} the coefficient of $\PP(\hat{m}, M_{\{1:r\}} )$ is $\frac{1}{s_r}-\frac{1}{s_{r+1}}$ {if $r \leq M-1$},
{and} the coefficient of $\PP(\hat{m}, M_{\{r:n\}} )$ is $\frac{1}{s_r}-\frac{1}{s_{r-1}}$ {if $r \geq M+1$}.
{Moreover, if} $k < r\leq M-1$, the coefficient of $\PP(M_{\{1:r\}},\,M_{\{1:k\}})$ is
\begin{align*} 
 & (s_r-s_{r+1})\frac{1}{s_r}  \left(\frac{1}{s_k}-\frac{1}{s_{k+1}} \right)
  +  \sum_{i=r+1}^{M-1}  (s_i-s_{i+1}) \left(\frac{1}{s_r}-\frac{1}{s_{r+1}} \right) \left(\frac{1}{s_k}-\frac{1}{s_{k+1}} \right)\\
 &\, +\,  s_M \left(\frac{1}{s_r}-\frac{1}{s_{r+1}} \right) \left(\frac{1}{s_k}-\frac{1}{s_{k+1}} \right)\\
 &= \left(\frac{1}{s_k}-\frac{1}{s_{k+1}}\right)\left(1-\frac{s_{r+1}}{s_r}+s_{r+1}\left(\frac{1}{s_r}-\frac{1}{s_{r+1}} \right) \right) = 0.
\end{align*}
By a mirror argument, for a fixed $M+1\leq k < r$, the coefficient of $\PP(M_{\{k:n\}},{M_{\{r:n\}}})$ is $0$.
\end{proof}

\begin{rem}
 $\hat{m}$ is easy to compute, using, for non adjacent $i$ and $j$ the formula\\
$
\hat{m}_{ij}=m_{i,V\setminus\{i,j\}} (\hat{m}^{-1}_{V\setminus\{i,j\},V\setminus\{i,j\}}) m_{V\setminus\{i,j\} ,j}.
$  \cite[p.1279]{L-M}.
\end{rem}

In the next Corollary, we consider $\us=p\mathbf{1}$, $p>1/2$. 
We note that 
$\delta_{p\mathbf{1}}^{(M)}$ and $\gamma_{p\mathbf{1},\,y}^{(M)}:=\gamma_{p,y}$ do not depend on $M$.
\begin{cor}
The variance function of the Wishart exponential family $\gamma_{p,y}$ is 
$$
V(m)=\frac1p \PP(\hat m).
$$
\end{cor}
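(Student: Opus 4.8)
The plan is to specialize Theorem \ref{ThNice} to the case $\us = p\mathbf{1}$, i.e. $s_1 = \cdots = s_n = p$. First I would look at the leading term of formula \eqref{mainVar}: its coefficient is $\frac{1}{s_1} + \frac{1}{s_n} - \frac{1}{s_M}$, which collapses to $\frac{1}{p} + \frac{1}{p} - \frac{1}{p} = \frac{1}{p}$, and in particular no longer depends on $M$. Next I would examine the two remaining sums: the coefficients occurring there are $\frac{1}{s_{i+1}} - \frac{1}{s_i}$ for $1 \le i \le M-1$ and $\frac{1}{s_{i-1}} - \frac{1}{s_i}$ for $M+1 \le i \le n$, and every one of these vanishes because all the $s_i$ are equal to $p$. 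Hence both sums drop out entirely, and what survives is exactly $V(m) = \frac{1}{p}\,\PP(\hat m)$.

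There is essentially no real obstacle here; the argument is a one-line substitution into \eqref{mainVar}. The only points worth recording are, first, that the hypothesis $p > 1/2$ is precisely what places $\us = p\mathbf{1}$ in the convergence domain $\{\,s_i > \tfrac{1}{2}\ (i \ne M),\ s_M > 0\,\}$ of Theorem \ref{laplacedelta}, so that the family $\gamma_{p,y}$ and its variance function are well defined; and second, that the $M$-independence of the answer matches the remark preceding the statement, namely that $\delta^{(M)}_{p\mathbf{1}}$ and $\gamma^{(M)}_{p\mathbf{1},y}$ do not depend on $M$. One could equally derive the formula from the more complicated expression \eqref{varcomplic}, but specializing \eqref{mainVar} is by far the most economical route.
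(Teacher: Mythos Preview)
Your proposal is correct and is exactly the intended argument: the paper states the corollary immediately after Theorem \ref{ThNice} without proof, and the obvious derivation is the direct substitution of $s_1=\cdots=s_n=p$ into \eqref{mainVar} that you carry out. Your additional remarks on the hypothesis $p>1/2$ and the $M$-independence are appropriate and accurate.
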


\subsubsection{A relation between the inverse mean map  and $m_{\frac1\us}$ }
Recall that for the classical Wishart exponential families $W_{s{\bf 1}, y}$
on the symmetric cone $Sym^+_n$ the bijection between the cone $Q_G$ and $P_G$ is given by 
$L(m)=m^{-1}$. 
The mean map is $m_s(y)=sy^{-1}$ and the inverse mean map $\psi_s(m)=sm^{-1}$.
It follows that
$$
\psi_s=L\circ m_{\frac1s} \circ L,
$$
that is, the maps $\psi_s$ and $ m_{\frac1s}$ are intertwined by the bijection $L$.

An analogous property
holds on the cone $Q_{A_n}$, with the intertwiner given by the Lauritzen map. 
The bijection  $L: Q_{A_n}\rightarrow P_{A_n}$
is the Lauritzen map  $L(m)=(\hat m)^{-1}$. 
Its inverse  $L^{-1}: P_{A_n}\rightarrow Q_{A_n}$
is $L^{-1}(y)=\pi(y^{-1})$.

 \begin{pro}
The inverse mean map $\psi_{\us}^{(M)}: Q_G\rightarrow P_G$
satisfies 
$$
\psi_{\us}^{(M)}=L \circ m_{\frac1{\us}}^{(M)} \circ L.
$$
Equivalently, for any $m\in Q_G$, \,
$
\pi(\psi_{\us}^{(M)}(m)^{-1})= m_{\frac1\us}^{(M)}(\hat m^{-1}).
$

$$\xymatrix{
 Q_{A_n} \ar[r]^{\psi_{\us}^{(M)}} \ar[d]_{L}&  P_{A_n}\\
 P_{A_n}\ar[r]_{m_{\frac1\us}^{(M)}}&  Q_{A_n} \ar[u]_{L}
}$$

\end{pro}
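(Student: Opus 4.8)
The plan is to reduce the assertion to a single pointwise identity and then unwind it with the $\mathrm{LU}(M)$ Cholesky factorisation behind Lemma~\ref{hatm}. Since $L\colon Q_{A_n}\to P_{A_n}$ is a bijection with inverse $L^{-1}(y)=\pi(y^{-1})$, the identity $\psi_{\us}^{(M)}=L\circ m_{\frac1\us}^{(M)}\circ L$ is equivalent to the ``inverted'' form already stated in the Proposition: for every $m\in Q_G$, writing $y:=\psi_{\us}^{(M)}(m)\in P_G$,
\[
\pi(y^{-1})=m_{\frac1\us}^{(M)}(\hat m^{-1}),
\]
because then $y=L\bigl(\pi(y^{-1})\bigr)=L\bigl(m_{\frac1\us}^{(M)}(\hat m^{-1})\bigr)=L\bigl(m_{\frac1\us}^{(M)}(L(m))\bigr)$. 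Here $m_{\frac1\us}^{(M)}$ denotes the map given by formula \eqref{msM} with the shape vector $\frac1\us=(s_1^{-1},\dots,s_n^{-1})$, so the whole proof is devoted to this one identity in $Q_G$.

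First I would fix $m\in Q_G$, set $y=\psi_{\us}^{(M)}(m)$, and invoke the Cholesky-type decomposition proved just before Lemma~\ref{hatm} to write $y=T\,\transp T$ with $T$ an $\mathrm{LU}(M)$ triangular matrix satisfying $T_{ij}=0$ whenever $i\not\sim j$. By Lemma~\ref{hatm},
\[
\hat m=\transp T^{-1}\,\mathrm{diag}(s_1,\dots,s_n)\,T^{-1},\qquad\text{so}\qquad \hat m^{-1}=T\,\mathrm{diag}(s_1^{-1},\dots,s_n^{-1})\,\transp T,
\]
and the right-hand side is exactly the matrix $Z^{-1}$ already shown to lie in $P_{A_n}$ in the proof of Lemma~\ref{hatm}; hence evaluating \eqref{msM} at $\hat m^{-1}$ is legitimate. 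Then I would rerun, with $\hat m^{-1}$ in place of $y$, the computation of the proof of Lemma~\ref{hatm} --- i.e.\ Parts \ref{triang1}, \ref{triang2} and \ref{triang3} of Lemma~\ref{triang}, which there produced \eqref{yi0} and \eqref{yn0} --- to obtain, with $[\cdot]^0$ the extension-by-zeros to an $n\times n$ matrix as in \eqref{MA}, that $[((\hat m^{-1})_{\{1:i\}})^{-1}]^0=\transp T^{-1}\mathrm{diag}(s_1,\dots,s_i,0,\dots,0)T^{-1}=M_{\{1:i\}}$ for $i\le M-1$, $(\hat m^{-1})^{-1}=\hat m$, and $[((\hat m^{-1})_{\{i:n\}})^{-1}]^0=\transp T^{-1}\mathrm{diag}(0,\dots,0,s_i,\dots,s_n)T^{-1}=M_{\{i:n\}}$ for $i\ge M+1$. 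Substituting these into \eqref{msM} writes $m_{\frac1\us}^{(M)}(\hat m^{-1})=\pi\bigl(\transp T^{-1}\Lambda\,T^{-1}\bigr)$ with $\Lambda$ the diagonal matrix
\[
\Lambda=\sum_{i=1}^{M-1}\Bigl(\tfrac1{s_i}-\tfrac1{s_{i+1}}\Bigr)\mathrm{diag}(s_1,\dots,s_i,0,\dots,0)+\tfrac1{s_M}\mathrm{diag}(s_1,\dots,s_n)+\sum_{i=M+1}^{n}\Bigl(\tfrac1{s_i}-\tfrac1{s_{i-1}}\Bigr)\mathrm{diag}(0,\dots,0,s_i,\dots,s_n),
\]
and the remaining task is the elementary telescoping check that $\Lambda=I$: computing the coefficient of the $n\times n$ matrix unit $e_k$ (as in the proof of Proposition~\ref{ProCompli}) in $\Lambda$ for $k<M$, $k=M$ and $k>M$ separately, each collapses to $1$ --- for $k<M$ only the terms $i\ge k$ of the first sum and the middle term survive, giving $s_k\bigl(\tfrac1{s_k}-\tfrac1{s_M}\bigr)+\tfrac{s_k}{s_M}=1$, and symmetrically for $k>M$. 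This yields $m_{\frac1\us}^{(M)}(\hat m^{-1})=\pi(\transp T^{-1}T^{-1})=\pi(y^{-1})$, and the reduction step closes the argument.

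A shorter route I would also mention is to bypass the expansion of $\Lambda$ altogether: equation \eqref{y-1} from the proof of Proposition~\ref{ProCompli} already states $y^{-1}=\tfrac1{s_M}\hat m+\sum_{j=1}^{M-1}(\tfrac1{s_j}-\tfrac1{s_{j+1}})M_{\{1:j\}}+\sum_{j=M+1}^{n}(\tfrac1{s_j}-\tfrac1{s_{j-1}})M_{\{j:n\}}$, and since $M_I=[((\hat m^{-1})_I)^{-1}]^0$ and $\hat m=(\hat m^{-1})^{-1}$, its right-hand side \emph{is} formula \eqref{msM} evaluated at $\hat m^{-1}$ with shape $\tfrac1\us$; hence $\pi(y^{-1})=m_{\frac1\us}^{(M)}(\hat m^{-1})$ is immediate. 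I do not expect a genuine obstacle here: the substance is the already-proved $\mathrm{LU}(M)$ factorisation, the already-proved Lemma~\ref{hatm}, and a one-line telescoping sum. The only delicate point is bookkeeping --- matching the zero-padding conventions of \eqref{MA}, keeping the index ranges in $\Lambda$ straight, and applying the swap $\us\leftrightarrow\tfrac1\us$ in \eqref{msM} to the correct argument --- and the conceptual payoff is that this is exactly why the clean symmetric-cone intertwining $\psi_s=L\circ m_{1/s}\circ L$ survives on the non-homogeneous cones $Q_{A_n}$.
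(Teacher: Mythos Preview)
Your proposal is correct, and the ``shorter route'' you mention at the end is exactly the paper's proof: the paper simply evaluates \eqref{msM} at $\hat m^{-1}$ with shape $\tfrac1\us$, observes via the definition \eqref{MA} that the summands are precisely $M_{\{1:j\}}$, $\hat m/s_M$, and $M_{\{j:n\}}$, and then compares with \eqref{y-1}. Your longer first route (expanding $\Lambda$ and telescoping) is also correct and is just a more self-contained rederivation of \eqref{y-1} from the $\mathrm{LU}(M)$ factorisation, so nothing substantively different is happening.
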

\begin{proof}
Using  formula \eqref{msM} of the mean function and definition \eqref{MA} of $M_{\{1:i\}}$ and $M_{\{i:n\}}$,
 we {see that}
 $m_{\frac1\us}^{(M)}(\hat m^{-1})$ equals
\begin{align*}
  {\pi}\left(
  \sum_{j=1}^{M-1} \left(\frac1{s_j}-\frac1{s_{j+1}}\right){M}_{\{1:j\}} + \frac{ \hat{m}}{s_M}+
\sum_{j=M+1}^{n} \left(\frac1{s_j}-\frac1{s_{j-1}}\right)M_{\{j:n\}}\right). 
\end{align*}
Confronting this result with \eqref{y-1}, we obtain \,
$
m_{\frac1\us}^{(M)}(\hat m^{-1})= {\pi}\left( \psi_{\us}^{(M)}(m)^{-1}\right).
$

\end{proof}
\subsection{Quadratic construction of Riesz measures and  Wishart {distributions} on $Q_G$}\label{quadr_Q}

Let $I\subset\{1,\ldots,n\}$.
We define $|I|$-dimensional subspaces $W_I$ of $\R^n$ by
$$
W_I=\{ x\in \R^n|\ x_i=0,\ i\notin I\}.
$$
Denote by $q^I$ the quadratic map
$q^I(x)=  x\transp{x}$
 from $W_I$ into  
 Sym$(n,\R)$
 and by   $q^I_{\ast}$ its projection onto ${I_G}$, i.e.
 $q^I_{\ast}={\pi}\circ q^I$. 
The maps  $q^I_{\ast}$
 are clearly $Q_G$-positive (submatrices {$y_I$} of a positive definite matrix {$y$} are positive definite for any ${I}\subset \{1,\ldots,n\} ).$
In \cite{graczykIshi}, p.322,
 Riesz measures $\mu_q$  associated  to a quadratic map $q$ were defined and their Laplace transform computed. Recall that the  measure $\mu_{q^I_{\ast}}$ is the image  of the Lebesgue measure on $W_I$ by $q^I_{\ast}$
and that its Laplace transform equals
\begin{equation}\label{Lap_quadr}
\mathcal{L}(\mu_{q^I_{\ast}})(y)= \pi^{|I|/2} |y_I|^{-1/2},   \ \ y\in P_G.
\end{equation}

 When $I=\{1,\ldots,k\}$, we write  $q^I_{\ast}=q^k_\ast$.
 When  $I=\{k,\ldots,n\}$, we write  $q^I_{\ast}=\tilde q^k_\ast$.
 
 Fix $M\in\{1,\ldots, n\}$. We define the set $B_M$ of {\it  basic   quadratic maps}  for the Riesz  $R_{\us}^{(M)}$ and Wishart 
 $\gamma^{(M)}_{\us, y}$ families on $Q_G$  by
 $ B_M=\{q^1_\ast,\ldots, q^{M-1}_\ast, q^n_\ast,\tilde q^{M+1}_\ast,\ldots, \tilde q^{n}_\ast\}$.
 Note that the basic   quadratic maps with values in $Q_G$ are different 
 for each fixed $M=1,\ldots, n$.

\begin{pro}
Let $\sigma_i \in  \R, i=1,\ldots,m$.
A virtual quadratic map

$  \hspace{2cm}
q^{\usigma}_{\ast}= \sum_{i<M}^{\oplus} (q^i_\ast)^{\oplus \sigma_i}
\oplus (q^n_\ast)^{\oplus \sigma_{M}}\oplus 
 \sum_{i>M}^{\oplus} (\tilde q^{i}_\ast)^{\oplus \sigma_{i}}.
$\\
exists {if} there exists $\us$ satisfying  $s_i >\frac{1}{2}, i\neq M$, $s_M>0$
and
\begin{eqnarray}\label{sigma=s}
\frac{\sigma_i}2=s_i-s_{i+1}, 1\le i<M,\ \ \  
 \frac{\sigma_{M}}2=s_M,
\ \ \ 
  \frac{\sigma_{i}}2=s_i-s_{i-1}, M<i\le n.
\end{eqnarray}
\end{pro}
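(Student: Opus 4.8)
The plan is to reduce the statement to Theorem~\ref{laplacedelta} through the formalism of virtual quadratic maps of \cite{graczykIshi}. Recall that in that formalism the formal direct sum $q^{\usigma}_\ast$ is said to \emph{exist} exactly when the formal product $\prod_i \mathcal{L}(\mu_{q_i})^{\sigma_i}$ of the Laplace transforms of the component Riesz measures is the Laplace transform of a genuine positive Radon measure $\mu_{q^{\usigma}_\ast}$ on $Q_G$. So the task is to compute this candidate product and to recognise it as a genuine Laplace transform precisely under the stated conditions.

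First I would write down the candidate. By \eqref{Lap_quadr} applied to $I=\{1,\dots,i\}$, to $I=\{1,\dots,n\}$ and to $I=\{i,\dots,n\}$, one has $\mathcal{L}(\mu_{q^i_\ast})(y)=\pi^{i/2}|y_{\{1:i\}}|^{-1/2}$ for $i<M$, $\mathcal{L}(\mu_{q^n_\ast})(y)=\pi^{n/2}|y|^{-1/2}$, and $\mathcal{L}(\mu_{\tilde q^{i}_\ast})(y)=\pi^{(n-i+1)/2}|y_{\{i:n\}}|^{-1/2}$ for $i>M$. Raising to the powers $\sigma_i$ and multiplying gives
\begin{equation*}
\prod_{i<M}\mathcal{L}(\mu_{q^i_\ast})^{\sigma_i}\cdot\mathcal{L}(\mu_{q^n_\ast})^{\sigma_M}\cdot\prod_{i>M}\mathcal{L}(\mu_{\tilde q^{i}_\ast})^{\sigma_i}
= \pi^{c(\usigma)}\Bigl(\prod_{i<M}|y_{\{1:i\}}|^{-\sigma_i/2}\Bigr)\,|y|^{-\sigma_M/2}\,\Bigl(\prod_{i>M}|y_{\{i:n\}}|^{-\sigma_i/2}\Bigr),
\end{equation*}
where $\pi^{c(\usigma)}$, with $c(\usigma)=\tfrac12\bigl(\sum_{i<M}i\sigma_i+n\sigma_M+\sum_{i>M}(n-i+1)\sigma_i\bigr)$, is a harmless multiplicative constant. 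Substituting the relations \eqref{sigma=s} turns the exponents $-\sigma_i/2$ into $s_{i+1}-s_i$ for $i<M$, into $-s_M$ on $|y|$, and into $s_{i-1}-s_i$ for $i>M$, so that by the very definition \eqref{P(M)} the power-function factor is exactly $\Delta^{(M)}_{-\us}(y)$; hence the candidate equals $\pi^{c(\usigma)}\,\Delta^{(M)}_{-\us}(y)$.

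It then remains to invoke Theorem~\ref{laplacedelta}: for $s_i>\tfrac12$ $(i\neq M)$ and $s_M>0$ the function $\Delta^{(M)}_{-\us}$ is, up to a positive constant, the Laplace transform of the positive measure $\delta^{(M)}_{\us}(\eta)\varphi_{A_n}(\eta)1_{Q_{A_n}}(\eta)\,d\eta$, i.e.\ of a positive multiple of the Riesz measure $R^{(M)}_{\us}$. Consequently, under these hypotheses $\pi^{c(\usigma)}\Delta^{(M)}_{-\us}$ is a Laplace transform of a positive measure, which is exactly what it means for $q^{\usigma}_\ast$ to exist, the associated measure $\mu_{q^{\usigma}_\ast}$ being proportional to $R^{(M)}_{\us}$; conversely, existence of $\mu_{q^{\usigma}_\ast}$ forces its Laplace transform to be $\pi^{c(\usigma)}\Delta^{(M)}_{-\us}$ with $\us$ fixed by \eqref{sigma=s}, and the necessity part of Theorem~\ref{laplacedelta} returns the same inequalities. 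I expect the only delicate point to be the bookkeeping: reading off correctly which principal submatrix each basic quadratic map $q^i_\ast$, $q^n_\ast$, $\tilde q^{i}_\ast$ detects, and matching the exponents produced by \eqref{sigma=s} against those in \eqref{P(M)} without a sign slip; the analytic content, namely convergence and positivity, is supplied entirely by Theorem~\ref{laplacedelta}.
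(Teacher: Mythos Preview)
Your proposal is correct and follows essentially the same approach as the paper: compute the product of Laplace transforms of the basic quadratic maps via \eqref{Lap_quadr}, recognise the result as a constant multiple of $\Delta^{(M)}_{-\us}(y)$ using \eqref{P(M)} and the relations \eqref{sigma=s}, and then invoke the Laplace-transform identity for the Riesz measure (the paper cites \eqref{LapRieszQ}, you cite its source Theorem~\ref{laplacedelta}) to conclude that this is the Laplace transform of a positive multiple of $R^{(M)}_{\us}$. Your write-up simply makes explicit the bookkeeping that the paper compresses into one sentence; the only superfluous part is the converse paragraph, since the statement only claims the ``if'' direction.
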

\begin{proof}
We compare  the Laplace transform of $\mu_{q^{\usigma}_{\ast}}$, computed thanks to \eqref{Lap_quadr}, with \eqref{LapRieszQ}.  
{As a result, we} see that there exists a constant $c>0$ such that
$R_{\us}^{(M)}= c \mu_{q^{\usigma}_{\ast}}$.
\end{proof}

Thus all the Riesz  $R_{\us}^{(M)}$  measures on $Q_G$  defined in this paper are obtained 
as virtual
or true  (i.e.
for $\sigma_i\in\N$) quadratic Riesz families, with basic 
maps from $B_M$.

Observe that by the quadratic construction, 
 we can obtain absolutely continuous  Riesz measures on $Q_G$ not belonging to $\cup_M \{R_{\us}^{(M)}\} $,
 e.g. when $n=3$, consider $\mu_q$ associated to the quadratic map $q=q_\ast^2\oplus (q_\ast^3)^{\oplus 2} \oplus \tilde q_\ast^2$.
 
\subsubsection{Relation to missing data}
Let us mention a nice application of the (true)
 quadratic Wishart {distributions} constructed from the basic set $B_M$, as laws of the Maximum Likelihood Estimators for the covariance
 matrix $\Sigma$ restricted to the terms $(\Sigma_{ij})_{i\sim j}$, i.e. to the three neighbour diagonals,
 in a two-sided monotonous missing data problem, when the sample contains:
   $\sigma_i$ observations of $(X_1,\ldots, X_i)$, $i<M$,\,
 $\sigma_k$ observations of $(X_k,\ldots, X_n)$, $k>M$ 
 and $\sigma_M$ observations of the complete $n$-dimensional
 Gaussian character  $(X_1,\ldots, X_n)$.
 
 \subsection{Higher order moments of Wishart families on $Q_{A_n}$ }\label{higher}
 Thanks to the identification of Wishart families $\gamma^{(M)}_{\us, y}$ with quadratically constructed Wishart {distributions} 
 $\gamma_{q^{\usigma}_\ast}$ in Section  \ref{quadr_Q}, we can
 compute  moments of any order $N$ of a Wishart random variable $X$
 on $Q_{A_n}$.
\begin{thm}	\label{highMom}
Let $X$ be a $Q_{A_n}$-valued random  variable with the Wishart
law  $\gamma^{(M)}_{\us, y}$.
Let $z^{(1)} ,z^{(2)}, \ldots z^{(N)}\in Z_G$.
Then, denoting by $C(\pi)$ the set of cycles of a permutation
 $\pi \in S_N$,
 {the $N$-th moment
 ${\bf E}(\langle X,z^{(1)}\rangle \ldots \langle X,z^{(N)}\rangle)$
 equals}
\begin{align*}
{}&\sum_{\pi\in S_N}
\prod_{{c\in C(\pi)}}
\left\{\sum_{i=1}^{M-1}(s_i-s_{i+1})\tr\prod_{j\in c}
(y_{\{1:i\}})^{-1}z_{\{1:i\}}^{(j)}\right. \\
&  \phantom{=\sum_{\pi\in S_N}\prod_{{c\in C(\pi)}}} \quad
\left. + s_M\tr \prod_{j\in c}y^{-1}z^{(j)}
+\sum_{i=M+1}^{n}(s_i-s_{i-1})\tr\prod_{j\in c}
(y_{\{i:n\}})^{-1}z_{\{i:n\}}^{(j)}
\right\} .
\end{align*}
\end{thm}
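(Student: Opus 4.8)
The plan is to exploit the quadratic representation of $\gamma^{(M)}_{\us,y}$ established in Section~\ref{quadr_Q}, so that the higher moments of $X$ are reduced to higher moments of a Gaussian vector. Recall that, up to normalisation, $R^{(M)}_{\us}=\mu_{q^{\usigma}_\ast}$, where $q^{\usigma}_\ast$ is the virtual quadratic map built from the basic family $B_M=\{q^1_\ast,\dots,q^{M-1}_\ast,q^n_\ast,\tilde q^{M+1}_\ast,\dots,\tilde q^n_\ast\}$ with multiplicities $\sigma_i$ linked to $\us$ by \eqref{sigma=s}. First I would treat the case where all $\sigma_i$ are \emph{positive integers}: then $X$ has the same law as $\pi\bigl(\sum_\ell W^{(\ell)}\transp{(W^{(\ell)})}\bigr)$, where the $W^{(\ell)}$ are independent centred Gaussian vectors supported on the coordinate subspaces $W_I$ ($I=\{1:i\}$, $\{1:n\}$, or $\{i:n\}$), each appearing $\sigma_i$ times, with a covariance read off from the Laplace transform \eqref{Lap_quadr}; concretely the block of the parameter $y$ that enters is $y_{\{1:i\}}$, $y$ itself, or $y_{\{i:n\}}$ respectively, and the Gaussian covariance is the inverse of that block. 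The key point is that $\langle X,z^{(j)}\rangle=\sum_\ell \langle W^{(\ell)}, z^{(j)} W^{(\ell)}\rangle$ is a sum of Gaussian quadratic forms, so the $N$-th mixed moment ${\bf E}(\langle X,z^{(1)}\rangle\cdots\langle X,z^{(N)}\rangle)$ is computed by Wick's formula (Isserlis' theorem).

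The second step is the combinatorial bookkeeping. Applying Wick's theorem to a product of $N$ quadratic forms in the Gaussian variables yields a sum over pairings of the $2N$ "legs"; as is standard, after summing over which copy $\ell$ each leg belongs to and using independence across copies, the pairings organise into permutations $\pi\in S_N$, and each cycle $c\in C(\pi)$ contributes a trace $\tr\prod_{j\in c}(\text{covariance})\,z^{(j)}$ of an alternating product of covariance matrices and the $z^{(j)}$. Because copy $\ell$ of type $I$ contributes covariance $(y_I)^{-1}$ and the relevant $z^{(j)}$ is effectively $z^{(j)}_I$ (only the $I$-block survives the projection onto $W_I$), summing over the $\sigma_i$ copies of each type multiplies that trace by $\sigma_i=2(s_i-s_{i+1})$, $2s_M$, or $2(s_i-s_{i-1})$; the factor $2$ cancels against the $\tfrac12$ appearing when the single quadratic form $\langle W,z W\rangle=\tfrac12\langle W, (z+\transp z)W\rangle$ is symmetrised (here $z^{(j)}\in Z_G$ is already symmetric, so one must track this factor of $2$ carefully). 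Collecting terms gives exactly the three-term sum inside the braces of the statement. The identity $I=\{1:i\}\sqcup$ handled separately for $i<M$, $i=M$ ($I=\{1:n\}$), $i>M$ matches the three ranges of summation.

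Finally, to remove the integrality restriction on the $\sigma_i$: both sides of the claimed formula are, for fixed $y$ and fixed $z^{(j)}$, polynomial functions of $\us\in\R^n$ — the left side because the moment is obtained by differentiating the Laplace transform $\mathcal L(\gamma^{(M)}_{\us,y})$, which by \eqref{LapRieszQ} is a ratio of $M$-power functions with exponents linear in $\us$, and the right side manifestly. Two polynomials that agree on the set $\{\us: s_i-s_{i\pm1}\in 2\N,\ s_M\in 2\N\}$, which is Zariski-dense in $\R^n$, agree identically; this extends the formula to all admissible $\us$ (those in the domain of the exponential family). Alternatively one can differentiate $N$ times the Laplace transform $\Delta^{(M)}_{-\us}(z+y)/\Delta^{(M)}_{-\us}(y)$ directly at $z=0$, using $\grad_y\log|y_A|=((y_A)^{-1})^0$ and the product rule, which reproduces the same cycle sum without any probabilistic input; I would present the quadratic argument as the conceptual proof and mention this analytic route as a check.

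The main obstacle I anticipate is not any single hard estimate but the careful matching of constants in the combinatorial step: keeping track of the factor $2$ from symmetrisation of the quadratic forms against the factor $2$ in \eqref{sigma=s}, and verifying that the restriction of $z^{(j)}$ to the block $I$ (rather than $z^{(j)}$ itself) is what legitimately appears in each cycle trace, since $W^{(\ell)}$ lives on $W_I$. Getting the cycle structure $C(\pi)$ to emerge correctly from the Wick pairings — in particular that cyclic products $\prod_{j\in c}(y_I)^{-1}z^{(j)}_I$ arise with the right orientation and that the sum is over all of $S_N$ and not merely over a smaller class of permutations — is the delicate point, though it is entirely parallel to the classical Wishart computation on $S^+_n$ and to the treatment in \cite{graczykIshi}.
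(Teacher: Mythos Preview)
Your proposal is correct and follows exactly the route taken in the paper: the paper's proof is the single sentence ``We apply Theorem~2.13 from \cite{graczykIshi} and formula \eqref{sigma=s},'' and what you have written is essentially an unpacking of that cited theorem (Gaussian realisation for integer multiplicities, Wick/Isserlis combinatorics producing the cycle sum, then analytic continuation in $\us$). The only comment is that the constant-matching you flag as delicate is precisely what Theorem~2.13 of \cite{graczykIshi} packages once and for all, so in the written version you can simply invoke it together with \eqref{sigma=s} rather than redo the bookkeeping.
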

\begin{proof}
We apply Theorem 2.13   from \cite{graczykIshi}  and formula \eqref{sigma=s}.
\end{proof}
 
\begin{cor}
If $\us=s{\bf 1}$, $s>\frac12$, then $\gamma^{(M)}_{\us, y}=\gamma_{s, y}$ does not depend on $M$.
{Moreover,} for $X$ with law $\gamma_{s, y}$, {we have}
  $${\bf E}(\langle X,z^{(1)}\rangle
\ldots \langle X,z^{(N)}\rangle)=
\sum_{\pi\in S_N} s^{|C(\pi)|}
\prod_{{c\in C(\pi)}}\tr \prod_{j\in c}y^{-1}z^{(j)} .$$
\end{cor}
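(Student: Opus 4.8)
The plan is to derive the corollary as a direct specialization of Theorem~\ref{highMom}. First I would set $\us = s\mathbf{1}$, i.e. $s_1 = s_2 = \cdots = s_n = s$. Then every difference $s_i - s_{i+1}$ (for $1 \le i \le M-1$) and every difference $s_i - s_{i-1}$ (for $M+1 \le i \le n$) vanishes, so in the expression for the $N$-th moment in Theorem~\ref{highMom} the only surviving term inside each brace $\{\cdots\}$ attached to a cycle $c \in C(\pi)$ is the middle one, namely $s_M\,\tr\prod_{j\in c} y^{-1} z^{(j)} = s\,\tr\prod_{j\in c} y^{-1} z^{(j)}$. This already shows the formula no longer references $M$, which also confirms the first assertion $\gamma^{(M)}_{\us,y} = \gamma_{s,y}$ does not depend on $M$ (alternatively, this independence is visible directly from $\delta^{(M)}_{s\mathbf 1} = \delta^{\prec}_{s\mathbf 1}$ being order-independent, hence $M$-independent, via Corollary~\ref{Cor_delta} together with the fact that a family does not depend on the normalization of its generating Riesz measure).

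Next I would collect the factor $s$ out of each cycle: a permutation $\pi \in S_N$ with $|C(\pi)|$ cycles contributes $\prod_{c \in C(\pi)} s = s^{|C(\pi)|}$, so the product over cycles becomes
\begin{equation*}
\prod_{c \in C(\pi)} \Bigl\{ s\,\tr \prod_{j\in c} y^{-1} z^{(j)} \Bigr\}
= s^{|C(\pi)|} \prod_{c \in C(\pi)} \tr \prod_{j \in c} y^{-1} z^{(j)}.
\end{equation*}
Summing over $\pi \in S_N$ yields exactly the claimed expression for ${\bf E}(\langle X, z^{(1)}\rangle \cdots \langle X, z^{(N)}\rangle)$.

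There is essentially no obstacle here: the work is entirely bookkeeping, and the only point requiring a moment's care is to make sure that when $s_i = s_{i+1}$ for consecutive indices the coefficient genuinely vanishes so that the boundary sums $\sum_{i=1}^{M-1}$ and $\sum_{i=M+1}^{n}$ drop out cleanly, leaving a single $M$-free term. One should also note for consistency that this recovers the familiar classical Wishart higher-moment formula on the full symmetric cone when the graph is complete, which is a useful sanity check but not needed for the proof. Hence the proof is simply: substitute $\us = s\mathbf{1}$ into Theorem~\ref{highMom}, observe that all difference coefficients vanish, and factor $s$ out of each cycle.
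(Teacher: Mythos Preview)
Your proposal is correct and matches the paper's intended derivation: the corollary is stated immediately after Theorem~\ref{highMom} without a separate proof, precisely because substituting $\us=s\mathbf{1}$ kills the difference terms and leaves only $s\,\tr\prod_{j\in c}y^{-1}z^{(j)}$ in each brace, after which factoring out $s^{|C(\pi)|}$ is immediate. Your side remark on $M$-independence via $\delta^{(M)}_{s\mathbf{1}}$ is also consistent with the paper's earlier observations.
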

{\bf Example.} 
 For any graph $A_n$ and $N=3$ we get for $X$ with law $\gamma_{s, y}$:\\
 $
 {\bf E}(\langle X,z^{(1)}\rangle
\langle X,z^{(2)}\rangle \langle X,z^{(3)}\rangle)=
s^3\prod_{j=1}^3\tr(y^{-1}z^{(j)}) +\\
s^2[\tr y^{-1}z^{(1)}y^{-1}z^{(2)} \tr y^{-1}z^{(3)}
+ \tr y^{-1}z^{(1)}y^{-1}z^{(3)} \tr y^{-1}z^{(2)}+\\ 
\tr y ^{-1}z^{(2)}y^{-1}z^{(3)}\tr y^{-1}z^{(1)}
]
+s[\tr\prod_{j=1}^3y^{-1}z^{(j)} +  \tr y^{-1}z^{(1)}y^{-1}z^{(3)}y^{-1}z^{(2)}].
$

\section{ Wishart exponential families   on the cone $P_G$. }\label{WishartP}

 A measure $\tilde{R}$ on $P_G$ is said to be a Riesz measure if, for some $1\leq M \leq n$,  $s_M>-1$ and $s_i>-3/2$, $i\neq M$, \,
 its Laplace transform is given by 
 \begin{equation}
  L_{\tilde{R}}(x)= \int_{P_G} e^{-\langle x ,y \rangle}\tilde{R}(dy) = \delta_{-\us}^{(M)}(x)\varphi_{Q_{A_n}}(x).
 \end{equation}

 From formula \eqref{eqn:laplacedelta}, the measure $\tilde{R}_{\us}^{(M)}(dy) = C_{\us} \Delta_{\us}^{(M)}(y)dy $, where 
 $$C_{\us}^{-1}=\pi^{(n-1)/2}
  \Bigl\{\prod_{i\neq M}\Gamma(s_i+\frac{3}{2})\Bigr\}
 \Gamma(s_M+1),$$ is a Riesz measure.
 The exponential family of generated by $\tilde{R}_{\us}^{(M)}$ will be called the exponential family of Wishart distributions on 
 $P_{G}$. Its density function is 
 \begin{equation}
 \tilde{\gamma}_{\us}^{(M)}(y)= \frac{1}{\delta_{-\us}^{(M)}(x)\varphi_{Q_{A_n}}(x)} e^{-\langle x ,y \rangle}\tilde R_{\us}^{(M)} (y). 
 \end{equation}
Its Laplace transform is 
\begin{equation}
 L_{\tilde{\gamma}_{\us}^{(M)}}(\theta) = \int_{P_G} e^{-\langle \theta ,y \rangle} \tilde{\gamma}_{\us}^{(M)}(y)
 = \frac{L_{\mu_{\us}^{(M)}}(\theta +x)}{L_{\mu_{\us}^{(M)}}(x)}
 =\frac{\delta_{-\us}^{(M)}(\theta +x)\varphi_{Q_{A_n}}(\theta+x)}{\delta_{-\us}^{(M)}(x)\varphi_{Q_{A_n}}(x)}.
\end{equation}

\subsection{Mean and covariance}
\begin{thm}
 The mean function of the Wishart exponential family on $P_G$ is for all $s_i > -\frac{3}{2}$ and $x\in Q_G$,
 \begin{align}\label{meanP}
 \tilde{m}_{\us}^{(M)}(x)&= \sum_{i=1}^{M-1}(s_i+\tfrac{3}{2}) (x_{\{i : i+1\}}^{-1} )^0
 + \sum_{i=M+1}^n (s_i+\tfrac{3}{2}) (x_{\{i-1 : i\}}^{-1} )^0 
 \\
 &   -\sum_{i=2}^{M-1}(s_{i-1}+1) (x_{ii}^{-1})^0 
 -(s_{M-1}-s_M+s_{M+1}+1) ( x_{MM}^{-1})^{0} \nonumber\\
& -  \sum_{i=M+1}^{n-1}(s_{i+1}+1) ( x_{ii}^{-1})^{0}. \nonumber
 \end{align}
The covariance function {$\tilde v(x) : I_G \to Z_G$} of the Wishart exponential family on $P_G$  equals
 \begin{align*}
 \tilde v(x)&= \sum_{i=1}^{M-1}(s_i+\tfrac{3}{2}) {\PP}\left[(x_{\{i : i+1\}}^{-1} )^0\right]
 + \sum_{i=M+1}^n (s_i+\tfrac{3}{2}) {\PP}\left[(x_{\{i-1 : i\}}^{-1} )^0 \right]
 \\
 &   -\sum_{i=2}^{M-1}(s_{i-1}+1){\PP}\left[ (x_{ii}^{-1})^0 \right]
 -(s_{M-1}-s_M+s_{M+1}+1){\PP}\left[ ( x_{MM}^{-1})^{0}\right] \nonumber\\
& -  \sum_{i=M+1}^{n-1}(s_{i+1}+1){\PP}\left[ ( x_{ii}^{-1})^{0}\right], \nonumber
 \end{align*}
{where we identify $I_G$ with $Z_G$ by the trace inner product.}
\end{thm}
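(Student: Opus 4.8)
The plan is to obtain the mean of the Wishart family on $P_G$ as the negative gradient of the logarithm of the Laplace transform of the generating Riesz measure, exactly in the spirit of Proposition~\ref{PROPmsM}, and then to differentiate once more to get the covariance.

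First I would note that the natural exponential family $\tilde\gamma_{\us}^{(M)}$ depends on $\tilde R_{\us}^{(M)}$ only up to a multiplicative constant, so that by Theorem~\ref{laplaceDelta} (together with Corollary~\ref{canonical}, which lets us use $\varphi_{A_n}$ in place of $\varphi_{Q_{A_n}}$) its cumulant function is, up to an additive constant, $\log\bigl(\delta_{-\us}^{(M)}(x)\,\varphi_{A_n}(x)\bigr)$, $x\in Q_G$. Hence $\tilde m_{\us}^{(M)}(x)=-\grad_x\log\bigl(\delta_{-\us}^{(M)}(x)\,\varphi_{A_n}(x)\bigr)$, the gradient being taken with respect to the trace pairing between $I_G$ and $Z_G$.

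Next I would substitute the explicit product formulas \eqref{delta(M)} and \eqref{char}. Replacing $\us$ by $-\us$ in \eqref{delta(M)} and multiplying by $\varphi_{A_n}$ expresses $\log\bigl(\delta_{-\us}^{(M)}(x)\varphi_{A_n}(x)\bigr)$ as an explicit $\R$-linear combination of the $\log|x_{\{i,i+1\}}|$, $1\le i\le n-1$, and the $\log x_{ii}$, $2\le i\le n-1$; the care needed here is the bookkeeping of the exponents around the distinguished vertex $M$ — where the factor $x_{MM}^{s_{M-1}-s_M+s_{M+1}+1}$ shows up — and the index shift $i\mapsto i-1$ that converts the $|x_{\{i-1:i\}}|$-factors of $\delta^{(M)}$ into $|x_{\{i,i+1\}}|$-factors. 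Since every submatrix occurring in \eqref{delta(M)} and \eqref{char} is supported on a clique of $A_n$ (a single vertex or an edge), the rule $\grad_x\log|x_A|=\bigl((x_A)^{-1}\bigr)^0$ used in the $Q_G$ case applies verbatim: the matrix $\bigl((x_A)^{-1}\bigr)^0$ lies in $Z_G$, and its trace pairing with an arbitrary $u\in Z_G$ involves only the entries indexed by the clique $A$. Differentiating term by term and re-collecting then yields \eqref{meanP}.

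Finally, for the covariance I would differentiate $\tilde m_{\us}^{(M)}$ once more, using that for a clique $A$ the differential of $x\mapsto\bigl((x_A)^{-1}\bigr)^0$ in a direction $u$ equals $-\PP\bigl[\bigl((x_A)^{-1}\bigr)^0\bigr]u$: indeed $\frac{d}{dt}(x_A+t u_A)^{-1}\big|_{t=0}=-(x_A)^{-1}u_A(x_A)^{-1}$, and because $B:=\bigl((x_A)^{-1}\bigr)^0$ is supported on the clique $A$ one has $\pi(BuB)=\bigl((x_A)^{-1}u_A(x_A)^{-1}\bigr)^0$, i.e. precisely $\PP(B)u$. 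Therefore $\tilde v(x)=-\bigl(\tilde m_{\us}^{(M)}\bigr)'(x)$ is obtained from \eqref{meanP} by replacing each occurrence of $\bigl((x_A)^{-1}\bigr)^0$ with $\PP\bigl[\bigl((x_A)^{-1}\bigr)^0\bigr]$ while keeping all coefficients, which is exactly the claimed formula (under the identification of $I_G$ with $Z_G$ via the trace that is used throughout). I expect the only genuinely delicate point to be the exponent bookkeeping in the third step; once the clique observation is in place, the differential-calculus steps are routine.
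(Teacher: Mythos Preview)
Your proposal is correct and follows essentially the same route as the paper: the paper's proof simply states $\tilde m_{\us}^{(M)}(x)=-\grad\log\bigl(\delta_{-\us}^{(M)}(x)\varphi_{Q_{A_n}}(x)\bigr)$ and that the covariance is obtained by differentiating \eqref{meanP}, which is exactly your plan (with $\varphi_{A_n}$ in place of $\varphi_{Q_{A_n}}$ via Corollary~\ref{canonical}, a harmless constant). Your write-up is in fact more explicit than the paper's, spelling out the clique observation and the derivative $\frac{d}{dt}(x_A+tu_A)^{-1}\big|_{t=0}=-(x_A)^{-1}u_A(x_A)^{-1}$ that justifies the passage from \eqref{meanP} to $\tilde v(x)$.
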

\begin{proof}
We have
$
 \tilde{m}_{\us}^{(M)}(x) = -\grad \log L_{\mu_{\us}^{(M)}}(x) =  -\grad \log \delta_{-\us}^{(M)}(x)\varphi_{Q_{A_n}}(x).
$
The covariance operator is obtained by differentiation of  \eqref{meanP}.
 \end{proof}
 \subsection{Quadratic construction of Riesz measures and  Wishart {distributions} on $P_G$}\label{quadr_P}

Let 
$M\in\{1,\ldots n\}$. Suppose  $s_i > -\frac{3}{2}$, for all $i\neq M$ and $s_M>-1$. 
Let $\theta\in Q_G$.
In order to establish a relation between quadratically constructed
Riesz measures $ \tilde \mu_q$ on $P_G$ and the measures $\tilde R_{\us}^{(M)}$
we consider the sets 
$J_k=\{k,k+1\}$ and $J_k'=\{k\}$.
As {\it basic quadratic maps} we choose the quadratic $P_G$-positive maps $q^{J_k }$ and  $q^{J_k'}$.
For $\alpha=(\alpha_1,\ldots,\alpha_{n-1})$ and $\beta=(\beta_1,\ldots,\beta_n)$ with $\alpha_i,\beta_j\in\N$ define
$
q^{\alpha,\beta}=\sum^{\oplus}_{k<n} (q^{J_k })^{\oplus \alpha_k}
\oplus \sum^{\oplus}_{k\le n} (q^{J_k' })^{\oplus \beta_k}$.
The following proposition is easy to prove by comparing  
$\delta^{(M)}_{-\us}(\eta)\varphi_{Q_{A_n}}(\eta)$ with  the Laplace transform
 \begin{align*}
{L}_{\tilde \mu_{q^{\alpha,\beta
}}}(\eta)=
\pi^{({\sum_{k<n} \alpha_k+\sum_{k\le n} \beta_k})/2}
\prod_{i<n}|\eta_{\{i,i+1\}}|^{-\alpha_i/2}
\prod_{j\le n}|\eta_{jj}|^{-\beta_j/2}.
\end{align*}
 \begin{pro}\label{Quadr_PAn}
 Let $M\in\{2,\ldots,n-1\}$. Then  there exists a constant $c>0$ such that $c\tilde R_{\us}^{(M)}=\tilde \mu_{q^{\alpha,\beta}}$ if and only if  $\alpha_i/2=s_i+3/2$, $i\le M-1$, $\alpha_i/2=s_{i+1}+3/2$, 
 $i\ge  M$, $\beta_1=0$,  $\beta_i/2=-s_{i-1}-1,2\le i\le M-1, \beta_M/2=-s_{M-1}+s_M-s_{M+1}-1, \beta_i/2=-s_{i+1}-1,M+1 \le i<n, \beta_n=0$.  For $M=1,n$ the condition $\beta_M=0$ is supressed.
 \end{pro}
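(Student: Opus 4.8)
The plan is to reduce the assertion to an identity between two explicit monomials in the entries of $\eta$ and then to invoke the injectivity of the Laplace transform on the cone $P_G$. Both $\tilde R_{\us}^{(M)}$ and $\tilde\mu_{q^{\alpha,\beta}}$ are positive measures on $P_G$ whose Laplace transforms are finite on the open cone $Q_{A_n}$, so $c\,\tilde R_{\us}^{(M)}=\tilde\mu_{q^{\alpha,\beta}}$ for some $c>0$ holds if and only if these two Laplace transforms are proportional. By the definition of $\tilde R_{\us}^{(M)}$ together with Theorem \ref{laplaceDelta}, its Laplace transform is proportional to $\delta_{-\us}^{(M)}(\eta)\,\varphi_{A_n}(\eta)$, and by Corollary \ref{canonical} one may freely replace $\varphi_{Q_{A_n}}$ by $\varphi_{A_n}$ throughout. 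Hence the first step is to rephrase the claim as: there exists $c'>0$ with
$$
\delta_{-\us}^{(M)}(\eta)\,\varphi_{A_n}(\eta)=c'\prod_{i<n}|\eta_{\{i,i+1\}}|^{-\alpha_i/2}\prod_{j\le n}|\eta_{jj}|^{-\beta_j/2},\qquad \eta\in Q_{A_n}.
$$

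Next I would expand the left-hand side with formula \eqref{delta(M)} for $\delta_{-\us}^{(M)}$ and formula \eqref{char} for $\varphi_{A_n}$, and compare exponents. The functions $\eta\mapsto|\eta_{\{i,i+1\}}|$ for $1\le i\le n-1$ and $\eta\mapsto\eta_{jj}$ for $1\le j\le n$ are multiplicatively independent on $Q_{A_n}$ (the off-diagonal coordinate $\eta_{i,i+1}$ occurs only in $|\eta_{\{i,i+1\}}|$), so the displayed identity is equivalent to equality of the corresponding exponents. After relabelling the second numerator product $\prod_{i=M+1}^{n}|\eta_{\{i-1,i\}}|^{-s_i}$ of $\delta_{-\us}^{(M)}$ as $\prod_{i=M}^{n-1}|\eta_{\{i,i+1\}}|^{-s_{i+1}}$, the determinant $|\eta_{\{i,i+1\}}|$ carries exponent $-(s_i+\tfrac{3}{2})$ for $i\le M-1$ and $-(s_{i+1}+\tfrac{3}{2})$ for $i\ge M$, which forces $\alpha_i/2=s_i+\tfrac{3}{2}$ for $i\le M-1$ and $\alpha_i/2=s_{i+1}+\tfrac{3}{2}$ for $i\ge M$. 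For the diagonal entries one combines the denominator of \eqref{delta(M)} with the factors $\prod_{i\ne1,n}\eta_{ii}$ of $\varphi_{A_n}$: the exponent of $\eta_{jj}$ is $0$ for $j\in\{1,n\}$, equal to $s_{j-1}+1$ for $2\le j\le M-1$, equal to $s_{M-1}-s_M+s_{M+1}+1$ for $j=M$, and equal to $s_{j+1}+1$ for $M+1\le j\le n-1$. Matching these with $-\beta_j/2$ yields precisely $\beta_1=\beta_n=0$, $\beta_j/2=-s_{j-1}-1$ for $2\le j\le M-1$, $\beta_M/2=-s_{M-1}+s_M-s_{M+1}-1$, and $\beta_j/2=-s_{j+1}-1$ for $M+1\le j\le n-1$, which is the asserted system.

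For $M\in\{1,n\}$ the same comparison applies verbatim, the only change being that $M$ is then an exterior vertex, so $\eta_{MM}$ does not occur among the factors $\prod_{i\ne1,n}\eta_{ii}$ of $\varphi_{A_n}$ and the bookkeeping at $j=M$ differs — this is exactly why the constraint $\beta_M=0$ is dropped there. I do not expect a genuine obstacle: the whole proof is a matching of exponents, and the only points requiring care are the consistent relabelling of the $n-1$ edge determinants and the sign convention (the $\beta_j$ enter through $-\beta_j/2$ while the diagonal entries of $\delta_{-\us}^{(M)}\varphi_{A_n}$ carry nonnegative exponents), together with the appeal to injectivity of the Laplace transform — the same device already used in Section \ref{quadr_Q}.
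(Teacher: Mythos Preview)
Your proposal is correct and follows exactly the approach the paper indicates: compare the Laplace transform $\delta_{-\us}^{(M)}(\eta)\,\varphi_{A_n}(\eta)$ of $\tilde R_{\us}^{(M)}$ (via Theorem~\ref{laplaceDelta} and Corollary~\ref{canonical}) with the explicit Laplace transform of $\tilde\mu_{q^{\alpha,\beta}}$, and match exponents of the multiplicatively independent factors $|\eta_{\{i,i+1\}}|$ and $\eta_{jj}$. The only cosmetic point is that for general $(\alpha,\beta)$ the object $\tilde\mu_{q^{\alpha,\beta}}$ is a \emph{virtual} Riesz measure rather than a genuine positive measure, so the equality $c\tilde R_{\us}^{(M)}=\tilde\mu_{q^{\alpha,\beta}}$ should be read directly at the level of Laplace transforms; your argument already does this, so nothing changes.
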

Proposition \ref{Quadr_PAn} implies easily two following facts. 
\begin{cor}\label{analysis}
\begin{enumerate}
\item All Riesz measures $\tilde R_{\us}^{(M)}$ are equal (up to a factor)
	to a virtual  quadratic Riesz measure  $\tilde \mu_{q^{\alpha,\beta}}$.
\item For $n\ge 4$, 
no true quadratic Riesz measure  $\tilde \mu_{q^{\alpha,\beta}}$, $\alpha_i,\beta_j\in\N$,
	 belongs (up to a factor) to the set of Riesz measures $\tilde R_{\us}^{(M)}$.
\end{enumerate}
\end{cor}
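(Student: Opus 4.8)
The plan is to deduce both statements from the equivalence established in Proposition~\ref{Quadr_PAn}, read as an \emph{affine dictionary} between the shape parameter $\us=(s_1,\dots,s_n)$ and the multiplicity vector $(\alpha,\beta)$: namely $\alpha_i/2=s_i+\tfrac{3}{2}$ for $i\le M-1$, $\alpha_i/2=s_{i+1}+\tfrac{3}{2}$ for $i\ge M$, $\beta_1=0$, $\beta_i/2=-s_{i-1}-1$ for $2\le i\le M-1$, $\beta_M/2=-s_{M-1}+s_M-s_{M+1}-1$, $\beta_i/2=-s_{i+1}-1$ for $M+1\le i<n$, and $\beta_n=0$ (with the stated change at $M=1,n$). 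The only analytic input, already available, is that the Laplace transform of a (possibly virtual) quadratic Riesz measure $\tilde\mu_{q^{\alpha,\beta}}$ is the product formula recalled just before Proposition~\ref{Quadr_PAn}, and that $L_{\tilde R_{\us}^{(M)}}$ is a positive multiple of $\delta^{(M)}_{-\us}\varphi_{Q_{A_n}}$.

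For part (1) I would argue as follows. Given an admissible $\us$ (i.e.\ $s_M>-1$ and $s_i>-\tfrac{3}{2}$ for $i\ne M$) and any $M\in\{1,\dots,n\}$, the dictionary above \emph{defines} a unique real vector $(\alpha,\beta)\in\R^{n-1}\times\R^n$, since each $\alpha_i$ and each $\beta_j$ is written explicitly as an affine function of $\us$. Taking these real numbers as exponents is exactly the passage to a virtual quadratic map; matching the product formula for $L_{\tilde\mu_{q^{\alpha,\beta}}}$ against $\delta^{(M)}_{-\us}\varphi_{Q_{A_n}}$ factor by factor then yields $\tilde R_{\us}^{(M)}=c\,\tilde\mu_{q^{\alpha,\beta}}$ for a suitable $c>0$. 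So every $\tilde R_{\us}^{(M)}$ is, up to a factor, a virtual quadratic Riesz measure — this is just Proposition~\ref{Quadr_PAn} with the hypothesis $\alpha_i,\beta_j\in\N$ dropped.

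For part (2) I would assume, towards a contradiction, that for some admissible $\us$, some $M$, and some genuine $(\alpha,\beta)\in\N^{n-1}\times\N^n$ one has $c\,\tilde R_{\us}^{(M)}=\tilde\mu_{q^{\alpha,\beta}}$, so that the dictionary holds, and then push admissibility through it. From $\alpha_i/2=s_i+\tfrac{3}{2}$ (resp.\ $=s_{i+1}+\tfrac{3}{2}$) with the occurring index $\ne M$ one gets $\alpha_i>0$, hence $\alpha_i\ge 1$ for all $i$; dually each equation $\beta_j/2=-s_{j-1}-1$ or $\beta_j/2=-s_{j+1}-1$ involves an index $\ne M$, forcing $\beta_j<1$, whence $\beta_j=0$, and combined with $\beta_1=\beta_n=0$ this kills every $\beta_j$ with $j\ne M$ and pins $s_j=-1$ for all $j\notin\{M-1,M,M+1\}$. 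It then remains to show that for $n\ge 4$ the residual data $\alpha_{M-1},\alpha_M,\beta_M$ — tied to $(s_{M-1},s_M,s_{M+1})$ via $\beta_M/2=-s_{M-1}+s_M-s_{M+1}-1$, via $s_M>-1$, and via the now over-determined neighbouring dictionary equations — admit no completion that is simultaneously admissible and $\N$-valued. I expect this last step to be the main obstacle: parts (1) and the reduction just carried out are pure linear algebra, but excluding this three-parameter residual system needs a careful finite case analysis according to the position of $M$ (the near-boundary cases $M\in\{1,2,n-1,n\}$ behaving differently from the interior ones), and it is precisely here that $n\ge 4$ is used — it is what guarantees at least one forced index $j\notin\{M-1,M,M+1\}$ on each relevant side, so that the residual system is genuinely over-determined.
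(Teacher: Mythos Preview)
Your Part~1 is correct and is exactly the paper's (implicit) argument: the dictionary of Proposition~\ref{Quadr_PAn} defines real $(\alpha,\beta)$ from any admissible $\us$, and equality of Laplace transforms gives $\tilde R_{\us}^{(M)}=c\,\tilde\mu_{q^{\alpha,\beta}}$.

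For Part~2 your approach diverges from the paper's and, as written, cannot be completed. The paper's proof is a one-line incompatibility: from the $\alpha$--relations $s_i+\tfrac32=\alpha_{i'}/2$ one gets $s_i\ge -1$ for every $i\ne M$, and from the simple $\beta$--relations $s_j+1=-\beta_{j'}/2$ one gets $s_j\le -\tfrac32$ for every $j\notin\{M-1,M,M+1\}$; for $n\ge4$ these two ranges of indices overlap, which is the contradiction. There is no residual three-parameter system to analyse.

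The reason you end up with one is that you read $\N$ as $\{0,1,2,\dots\}$: you deduce only $\beta_j<1$, hence $\beta_j=0$ and $s_j=-1$, which is not yet contradictory. But under that reading Part~2 is actually \emph{false}, so your planned case analysis would never close. For instance, take $n=4$, $M=2$, $\us=(-1,0,-1,-1)$: then $\alpha=(1,1,1)$ and $\beta=(0,2,0,0)$ satisfy the full dictionary of Proposition~\ref{Quadr_PAn}, $\us$ is admissible ($s_2=0>-1$, $s_1=s_3=s_4=-1>-\tfrac32$), and one checks directly that
\[
\delta^{(2)}_{-\us}(\eta)\,\varphi_{A_4}(\eta)
=\lvert\eta_{\{1,2\}}\rvert^{-1/2}\lvert\eta_{\{2,3\}}\rvert^{-1/2}\lvert\eta_{\{3,4\}}\rvert^{-1/2}\eta_{22}^{-1},
\]
which matches $L_{\tilde\mu_{q^{\alpha,\beta}}}$ up to a constant. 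So the ``main obstacle'' you anticipate is not a technical difficulty but an actual obstruction under your convention. The paper's argument uses $\alpha_{i'},\beta_{j'}\ge 1$, which is what yields $s_j\le -\tfrac32$ directly and makes the contradiction immediate; you should adopt that reading rather than attempt the residual case split.
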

\begin{proof} To prove Part 2,
we have  conditions $s_i+\frac32=\alpha_{i'}/2$
and $s_j+1=-\beta_{j'}/2$, so all (except at most one) $s_i\ge -1$,   and  all (except at most one) $s_j\le -3/2$ simultanously.
\end{proof}	
	
	\subsection{Higher order moments of Wishart families on $P_{A_n}$}
	Thanks to Part 1 of Corollary \ref{analysis}, all the moments of the Wishart Exponential Families    $\tilde\gamma_{\us,\theta}^{(M)}$ can be computed, using  Theorem 2.13  
	from \cite{graczykIshi}
	and Proposition \ref{Quadr_PAn}.
	 
	\begin{thm}	
Let $Y$ be a $P_{A_n}$-valued random  variable with the Wishart
law  $\tilde\gamma^{(M)}_{\us, \theta}$.
Let $x^{(1)} ,x^{(2)}, \ldots x^{(N)}\in I_G$.
Then, denoting by $C(\pi)$ the set of cycles of a permutation $\pi \in S_N$,
 {the $N$-th moment 
${\bf E}(\langle Y,x^{(1)}\rangle \ldots \langle Y,x^{(N)}\rangle)$
 equals}
\begin{align*}
{}&\sum_{\pi\in S_N}
\prod_{{c\in C(\pi)}} 
\Bigl\{
\sum_{i=1}^{M-1}(s_i+\frac32)\tr\prod_{j\in c}
(\theta_{\{i:i+1\}})^{-1}x_{\{i:i+1\}}^{(j)} +\\
&\sum_{i=M}^{n-1}(s_{i+1}+\frac32)\tr\prod_{j\in c}
(\theta_{\{i:i+1\}})^{-1}x_{\{i:i+1\}}^{(j)}
-\sum_{i=2}^{M-1}
(s_{i-1}+1)\theta_{ii}^{-|c|}\prod_{j\in c} x_{ii}^{(j)}\\
&-(s_{M-1}-s_M+s_{M+1}+1)\theta_{MM}^{-|c|}\prod_{j\in c} x_{MM}^{(j)}
- \sum_{i=M+1}^{n-1}(s_{i+1}+1) \theta_{ii}^{-|c|}\prod_{j\in c} x_{ii}^{(j)}
\Bigr\} .
\end{align*}
\end{thm}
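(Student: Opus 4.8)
The plan is to follow the pattern of the proof of Theorem~\ref{highMom}: realise $\tilde\gamma^{(M)}_{\us,\theta}$ as a (possibly virtual) quadratically constructed Wishart family on $P_G$ built from the basic maps $q^{J_k}$, $q^{J'_k}$ of Section~\ref{quadr_P}, and then read off its moments from the general moment formula for quadratic Wishart distributions, Theorem~2.13 of \cite{graczykIshi}.

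First I would use Part~1 of Corollary~\ref{analysis} together with Proposition~\ref{Quadr_PAn}: up to a positive constant, $\tilde R^{(M)}_{\us}=\tilde\mu_{q^{\alpha,\beta}}$, where
$$
q^{\alpha,\beta}=\sum^{\oplus}_{k<n}(q^{J_k})^{\oplus\alpha_k}\ \oplus\ \sum^{\oplus}_{k\le n}(q^{J'_k})^{\oplus\beta_k},
$$
with the (virtual) multiplicities $\alpha_i/2=s_i+\tfrac32$ for $i\le M-1$, $\alpha_i/2=s_{i+1}+\tfrac32$ for $M\le i\le n-1$, $\beta_1=\beta_n=0$, $\beta_i/2=-(s_{i-1}+1)$ for $2\le i\le M-1$, $\beta_M/2=-(s_{M-1}-s_M+s_{M+1}+1)$, and $\beta_i/2=-(s_{i+1}+1)$ for $M+1\le i\le n-1$. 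Since an exponential family depends on its generating measure only up to a positive constant, $\tilde\gamma^{(M)}_{\us,\theta}$ is the quadratic Wishart family attached to $q^{\alpha,\beta}$, evaluated at $\theta\in Q_G$.

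Next I would invoke Theorem~2.13 of \cite{graczykIshi}. For a quadratic Wishart variable $Y$ associated to $q=\bigoplus_\ell (q^{I_\ell}_\ast)^{\oplus\sigma_\ell}$ at parameter $\theta$, that theorem expresses
$$
{\bf E}\bigl(\langle Y,x^{(1)}\rangle\cdots\langle Y,x^{(N)}\rangle\bigr)
=\sum_{\pi\in S_N}\ \prod_{c\in C(\pi)}\ \sum_\ell\frac{\sigma_\ell}{2}\,
\tr\!\prod_{j\in c}(\theta_{I_\ell})^{-1}x^{(j)}_{I_\ell},
$$
the inner trace being over the $I_\ell\times I_\ell$ block. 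Here $I_\ell$ runs over the sets $J_k=\{k,k+1\}$ ($k<n$) and $J'_k=\{k\}$ ($k\le n$): for $I_\ell=J_k$ the corresponding term is the $2\times 2$ trace $\tr\prod_{j\in c}(\theta_{\{k:k+1\}})^{-1}x^{(j)}_{\{k:k+1\}}$ weighted by $\alpha_k/2$, and for $I_\ell=J'_k$ it collapses to the scalar $\theta_{kk}^{-|c|}\prod_{j\in c}x^{(j)}_{kk}$ weighted by $\beta_k/2$. Substituting the multiplicities listed above, splitting the sum over the $J_k$ at $k=M$, and discarding the vanishing contributions $\beta_1=\beta_n=0$, one recovers term by term the displayed formula.

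The point requiring care — and the main obstacle — is that the $\beta_k$, and for small $s_i$ also the $\alpha_k$, are in general not nonnegative integers, so $q^{\alpha,\beta}$ is only a \emph{virtual} quadratic map and Theorem~2.13 must be used in analytically continued form, exactly as in the $Q_G$ case of Theorem~\ref{highMom}. This is legitimate because, for fixed $N$ and fixed test matrices $x^{(1)},\dots,x^{(N)}$, both sides of the moment identity are polynomials in the multiplicities $(\sigma_\ell)$ — each side is a finite sum of monomials of degree $|C(\pi)|\le N$ — which agree on the Zariski-dense set of nonnegative integer multiplicities, hence agree identically, in particular on the open set $\{s_M>-1,\ s_i>-\tfrac32\ (i\neq M)\}$ where $\tilde R^{(M)}_{\us}$ is defined. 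Finally I would record the endpoint cases $M=1$ and $M=n$, where the diagonal map at $M$ is absent (consistently, $\beta_M=0$ in Proposition~\ref{Quadr_PAn}); with the convention $s_0=s_{n+1}=0$ the middle diagonal term of the stated formula specialises correctly there.
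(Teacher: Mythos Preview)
Your proposal is correct and follows exactly the route the paper indicates: invoke Part~1 of Corollary~\ref{analysis} to identify $\tilde R^{(M)}_{\us}$ with a virtual $\tilde\mu_{q^{\alpha,\beta}}$, read off the multiplicities from Proposition~\ref{Quadr_PAn}, and then apply Theorem~2.13 of \cite{graczykIshi}. The paper's own proof is just this one-sentence reference; your write-up is more explicit, in particular in spelling out the analytic-continuation argument that justifies using Theorem~2.13 for virtual (non-integer, possibly negative) multiplicities, a point the paper leaves implicit here as in the $Q_G$ case.
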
	

\section{Relations with the type {I} and type {II} Wishart distributions of \cite{L-M}}
\label{LM}
In this section we will explain the relation between our work and 
  type 1 and type 2 Wishart distributions constructed by \cite{L-M}.
  
  \cite{L-M} introduced, studied  and used  the function $H(\alpha,\beta,x)$ on $ Q_G$ as a
  generalized  power function for constructing  type {I} and type {II} Wishart 
distributions. The reader is referred to the {cited} paper
 for the general definition of the  function  $H(\alpha,\beta,x)$  as well as
for graphical theoretic notions such as cliques, separators and perfect order of cliques
(see also 
\cite{lauritzen1996}). For our purpose, it is sufficient to recall that for  $ \alpha\in\R^{n-1}$
and 
  $\beta \in\R^{n-2}$ 
\begin{equation}\label{Hformula}
H(\alpha,\beta;x)=\frac{\prod_{i=1}^{n-1}|x_{\{i,i+1\}}|^{\alpha_i}}{\prod_{i=2}^{n-1}x_{ii}^{\beta_i}},\ \ x\in Q_{A_n}, 
\end{equation}
that the cliques(i.e.  the sets of vertices of maximal complete subgraphs) are  $\{1,2\},\ldots,\\
\{n-1,n\}$ and the separators $\{2\},\ldots,
\{n-1\}$.
The definition of the function $H(\alpha,\beta;x)$ does not include any restrictions on the values of the  parameter 
$(\alpha,\beta)$ of dimension $2n-3$. 

However, the existence of type {I} Wishart distributions
on $Q_G$ is only showed for $(\alpha,\beta)$ belonging
to some set $A_P$ dependent on a perfect order of cliques $P$,
i.e. for 
$(\alpha,\beta)\in {\mathcal A}_0=
 \cup_P A_P$,
   where the union is on all   perfect order of cliques.
Proposition \ref{A_P} describes this set for $A_n$ 
graphs. 
It also makes clear a phenomenon observed by \cite{L-M} for the graph $A_4$, where there are only two different sets 
$A_P$ although  there are $4$ perfect orders of cliques.
To prove Proposition  \ref{A_P} we use the following  explicit  relation between two concepts: 
perfect orders of cliques used by \cite{L-M} and eliminating orders of vertices used in this work. 

 \begin{pro}\label{POC}
Let  $G=A_n: 1-2-3-\hdots-n$. A clique ordering $C'_1< \hdots < C'_{n-1}$ is perfect if and only if $C'_{n-1}\prec  \hdots \prec  C'_1$ is
an eliminating order on the $A_{n-1}$  graph $G': C_1-C_2 \hdots -C_{n-1}$. There are  $2^{n-2}$ perfect orders of cliques
on $A_n$.
\end{pro}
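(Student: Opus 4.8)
The plan is to translate both sides of the equivalence into the same combinatorial condition on orderings, and then invoke Proposition~\ref{order}. Recall first the data: the cliques of $G=A_n$ are $C_i=\{i,i+1\}$ for $1\le i\le n-1$, with $C_i\cap C_{i+1}=\{i+1\}$ and $C_i\cap C_j=\emptyset$ when $|i-j|\ge 2$; hence the graph $G'$ of the statement (vertices $C_1,\dots,C_{n-1}$, edges $C_i\sim C_{i+1}$) is the path $A_{n-1}$, and its only minimal separators are the singletons $\{2\},\dots,\{n-1\}$. Recall also that an ordering $C'_1,\dots,C'_{n-1}$ of the cliques is \emph{perfect} when it has the running intersection property: writing $H_{t-1}=C'_1\cup\cdots\cup C'_{t-1}$ and $S_t=C'_t\cap H_{t-1}$, for every $t\ge 2$ there is $j<t$ with $S_t\subseteq C'_j$.

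The first step is a local description of the perfect orders of the cliques of $A_n$: I claim $C'_1,\dots,C'_{n-1}$ is perfect if and only if for every interior clique $C_c$ ($2\le c\le n-2$) at least one of its two $G'$-neighbours $C_{c-1},C_{c+1}$ occurs after $C_c$ in the ordering. Indeed, fix $t\ge 2$ and let $C'_t=C_c$. If both $C_{c-1}$ and $C_{c+1}$ lie among $C'_1,\dots,C'_{t-1}$, then $\{c,c+1\}\subseteq H_{t-1}$, so $S_t=C_c$; since the cliques are maximal, no other clique contains $C_c$, and the running intersection property fails at $t$. If instead at most one of the (at most two) $G'$-neighbours of $C_c$ lies among $C'_1,\dots,C'_{t-1}$ — which is automatic when $C_c$ is an endpoint $C_1$ or $C_{n-1}$ of $G'$ — then $S_t$ is either empty or one of the singletons $\{c\},\{c+1\}$, which is contained in the neighbouring clique already placed, so the property holds at $t$. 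As the running intersection property is the conjunction over $t$ of these locally checked conditions, the characterisation follows.

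The second step reads this off in the reversed order. For an interior clique $C_c$, the statement "$C_c$ is followed by one of $C_{c-1},C_{c+1}$ in $C'_1<\cdots<C'_{n-1}$" is the same as "$C_c$ is preceded by one of $C_{c-1},C_{c+1}$ in the reversed order $C'_{n-1}\prec\cdots\prec C'_1$". Now in the path $G'=A_{n-1}$ the two neighbours of an interior vertex are themselves non-adjacent, so for any ordering of the vertices of $G'$ the set $v^+$ of later neighbours of a vertex $v$ is complete precisely when $|v^+|\le 1$; for an interior $v$ this is exactly "$v$ has a predecessor among its two neighbours", and for an endpoint $v$ it holds automatically. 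Combining with the first step, the reversed order $C'_{n-1}\prec\cdots\prec C'_1$ is an eliminating order on $G'$ in the sense of Definition~\ref{DefElimin} if and only if $C'_1<\cdots<C'_{n-1}$ is a perfect order of cliques of $A_n$, which is the asserted equivalence.

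Since order-reversal is an involution on the set of orderings of $C_1,\dots,C_{n-1}$, it restricts to a bijection from the perfect orders of cliques of $A_n$ onto the eliminating orders of $G'=A_{n-1}$; by Proposition~\ref{order} the latter number $2^{(n-1)-1}=2^{n-2}$, which gives the last assertion. The one point needing genuine care is the first step: one must notice that a step with empty separator $S_t=\emptyset$ does not by itself violate the running intersection property, so that perfectness is entirely governed by the final relative order within each triple $C_{c-1},C_c,C_{c+1}$, and no separate argument about connectivity of the prefixes $C'_1\cup\cdots\cup C'_t$ is needed.
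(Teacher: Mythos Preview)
Your proof is correct. The paper's own proof of this proposition is essentially omitted (it says only that both implications are ``straightforward and based on the definitions of a perfect order of cliques and of an eliminating order on a graph''), so you have supplied exactly the details the paper leaves out. Your organisation is slightly different from what the paper sketches: rather than proving the two implications separately, you extract a single local criterion on the ordering (each interior clique $C_c$ is followed by at least one of $C_{c-1},C_{c+1}$) and show it is simultaneously equivalent to the running intersection property on the one side and, after reversal, to the condition $|v^+|\le 1$ defining an eliminating order on the path $G'$ on the other. This common-characterisation route is clean and makes the bijection with eliminating orders of $A_{n-1}$, and hence the count $2^{n-2}$ via Proposition~\ref{order}, immediate. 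Your care about the case $S_t=\emptyset$ is appropriate and handled correctly.
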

\begin{proof}
The proof is in two parts, for the two inclusions of the claimed equality.  Both parts are straightforward and
   based on  the definitions of a perfect order of cliques and of an eliminating order on a graph.
   We omit the details.
\end{proof}

\begin{pro}\label{A_P}
  Let $P':  C_1'<C'_2<\ldots < C'_{n-1}$ and $P'': C_1''<C''_2<\ldots < C''_{n-1}$ be two perfect orders of cliques on $G=A_n$. Let $S_2'$ and $S_2''$ be the first separators of $P'$ and $P''$.
  If $S_2'=S_2''$ then $A_{P'}=A_{P''}$, i.e. the parameter set $A_P$ depends only on the first separator $S_2$ with respect to the clique order $P$.
  If $S_2=\{M\}$ then the set $A_P$ is described by the conditions:\\
  \begin{equation}\label{A1}
  \begin{cases}
    \alpha_{j}=\beta_{j+1}\ {\it if}\ 1\le j\le M-2,\\
 \alpha_{j}=\beta_{j}\ {\it if}\ M+1\le j\le n-1,
  \end{cases}\end{equation}
  {and}
\begin{equation}\label{A2}
  \alpha_{j}>\frac12  \; \textnormal{for all}\; 1\leq j\leq n-1;\;  \alpha_{M-1} + \alpha_{M} -\beta_M >0.
  \end{equation}
  Thus  $\mathcal{A}_0= \cup_P A_P$ is the set of $(\alpha,\beta)$ such that there exists $2\leq M\leq n-1$ {for which} 
  \eqref{A1} and \eqref{A2} are satisfied. 
 \end{pro}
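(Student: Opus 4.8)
The plan is to route the argument through the $M$--power functions of Section~\ref{RieszWishartQ}: I will show that, for a perfect order of cliques $P$ with first separator $S_2=\{M\}$, the Letac--Massam type I power function $H(\alpha,\beta;x)$ of \eqref{Hformula} is --- as a function on $Q_{A_n}$ --- one of our $\delta^{(M)}_{\us}$, so that the existence set $A_P$ is precisely the image, under an affine change of parameters, of the convergence domain furnished by Theorem~\ref{laplacedelta}.

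First I would use Proposition~\ref{POC} to attach to $P$ an eliminating order $\prec_P$ of the vertices of $A_n$ (the perfect elimination order associated with $P$). Since every separator of $A_n$ is a singleton, this order removes one vertex at a time except that the two vertices of the first clique $C'_1$ are removed last; its maximal element is the unique vertex $M$ of $S_2=C'_1\cap C'_2$, and since $C'_1,C'_2$ are the two cliques meeting in $\{M\}$ we have $\{C'_1,C'_2\}=\{C_{M-1},C_M\}$ and $M\in\{2,\dots,n-1\}$. By Corollary~\ref{Cor_delta}, $\delta^{\prec_P}_{\us}=\delta^{(M)}_{\us}$, depending only on $M$. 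Now $H(\alpha,\beta;x)$ is the ratio of clique minors to separator minors, so comparing its monomial exponents in \eqref{Hformula} with those of $\delta^{(M)}_{\us}$ in \eqref{delta(M)} shows that $H(\alpha,\beta;\cdot)$ coincides with some $\delta^{(M)}_{\us}$ if and only if the linear relations \eqref{A1} hold, and that $\us$ is then recovered from $(\alpha,\beta)$ by the affine bijection $s_j=\alpha_j$ $(j\le M-1)$, $s_j=\alpha_{j-1}$ $(j\ge M+1)$, $s_M=\alpha_{M-1}+\alpha_M-\beta_M$. This already yields $A_{P'}=A_{P''}$ when $S_2'=S_2''=\{M\}$: both sets are carried by one and the same change of variables --- which depends only on $M$ --- onto the same subset of the $\us$--space.

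It remains to pin down that subset. By Theorem~\ref{laplacedelta}, equivalently by the convergence criterion of the Riesz measure $R^{(M)}_{\us}$, the type I Wishart distribution with parameter $(\alpha,\beta)$ satisfying \eqref{A1} exists if and only if the corresponding $\us$ satisfies $s_i>\tfrac12$ for $i\neq M$ and $s_M>0$. Rewriting $s_i>\tfrac12$ for $i\le M-1$ and for $i\ge M+1$ through the affine map gives $\alpha_j>\tfrac12$ for $j\le M-1$ and for $j\ge M$, i.e. for all $1\le j\le n-1$, while $s_M>0$ becomes $\alpha_{M-1}+\alpha_M-\beta_M>0$; these are exactly the conditions \eqref{A2}. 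Finally $\mathcal{A}_0=\bigcup_P A_P$ is the union over the admissible values $M\in\{2,\dots,n-1\}$ --- each realized by some perfect order, by Proposition~\ref{POC} together with Proposition~\ref{order} --- which is the stated description.

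I expect the main obstacle to be the exponent bookkeeping in the identification step: one has to check that the history/separator contributions of the perfect order $P$ reproduce exactly the denominator $\prod_{i=2}^{M-1}\eta_{ii}^{s_{i-1}}\cdot\eta_{MM}^{s_{M-1}-s_M+s_{M+1}}\cdot\prod_{i=M+1}^{n-1}\eta_{ii}^{s_{i+1}}$ of \eqref{delta(M)}, including the asymmetric exponent $s_{M-1}-s_M+s_{M+1}$ carried by the separator vertex $M$. Once this is verified, the rest is a direct appeal to Theorem~\ref{laplacedelta} and Corollary~\ref{Cor_delta} together with the routine inversion of an affine change of parameters.
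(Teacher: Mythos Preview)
Your approach differs from the paper's and contains a conceptual gap. The set $A_P$ is not \emph{defined} as the convergence domain of the Laplace integral $\int e^{-\tr(y\eta)}H(\alpha,\beta;\eta)\mu_{A_n}(d\eta)$; in \cite{L-M} it is a combinatorially specified set of parameters, given by certain equalities among the $\alpha$'s and $\beta$'s prescribed by the history sets of the perfect order $P$, together with inequality constraints. Proposition~\ref{A_P} asks one to compute what \emph{that} definition yields for $A_n$. Your argument instead (i) identifies $H(\alpha,\beta;\cdot)$ with some $\delta^{(M)}_{\us}$ under the linear constraints~\eqref{A1}, and (ii) invokes Theorem~\ref{laplacedelta} to determine when the Laplace transform converges; you then assert that the resulting set is $A_P$. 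But Theorem~\ref{thm3.3} of \cite{L-M} only gives the inclusion $A_P\subset\{\text{convergence}\}$, and the reverse inclusion for $\mathcal{A}_0$ is exactly the Letac--Massam conjecture, not something one may use here. In short, you have computed the existence set, not $A_P$.

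The paper's proof is direct and purely combinatorial: by Proposition~\ref{POC} a perfect order $P$ on the cliques of $A_n$ is the same as an eliminating order on the clique path $C_1\!-\!C_2\!-\!\cdots\!-\!C_{n-1}$ (an $A_{n-1}$ graph), and by Proposition~\ref{order} such an order is governed by its maximal element, equivalently by the first clique $C'_1$ and hence by the first separator $S_2=\{M\}$. One then reads off the Letac--Massam conditions defining $A_P$ for this $P$; because on a path the history sets beyond $C'_1,C'_2$ are singletons, the equality constraints collapse to~\eqref{A1} and the inequality constraints to~\eqref{A2}, depending only on $M$. No appeal to Theorem~\ref{laplacedelta} is needed, and the identification $H=\delta^{(M)}_{\us}$ is established separately, \emph{after} Proposition~\ref{A_P}, as Proposition~\ref{comp-H-delta}; your plan would invert that logical order and make Corollary~\ref{comparison} circular. (A smaller point: Proposition~\ref{POC} yields an eliminating order on the \emph{clique} graph $A_{n-1}$, not on the vertices of $A_n$; the passage to a vertex order with maximum $M$ that you invoke needs a separate sentence.)
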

\begin{proof}
 We use Propositions \ref{order} and  \ref{POC}. 
\end{proof}

The reference measure $\mu_G$ used by \cite{L-M}  is, on the cone  $Q_{A_n}$,
  \begin{equation}\label{mu}
 \mu_{A_n}(x)(dx)= H_{A_n}(-\frac{3}{2}\mathbbm{1},-\mathbbm{1};x)1_{Q_{A_n}}(x)dx. 
\end{equation}
By \eqref{char},  we observe that $\mu_{A_n}(x)(dx)= \varphi_{Q_{A_n}}(x)1_{Q_{A_n}}(x)dx$.
{Namely}, the reference measure $\mu_G$ is the characteristic measure of the cone $G=Q_{A_n}$.

\begin{thm}\label{thm3.3}(\cite{L-M} Theorem 3.3)
If $(\alpha,\beta)\in \mathcal{A}_0$, then, for a constant ${\Gamma_1}_{(\alpha,\beta)}$, and
for all $y\in P_{A_n}$
 \begin{equation*}
  \int_{Q_{A_n}}e^{-\tr(xy)} H(\alpha,\beta;x)\mu_{A_n}(x)(dx)=  {\Gamma_1}_{(\alpha,\beta)}H(\alpha,\beta; {\pi(y^{-1})}).
 \end{equation*}
\end{thm}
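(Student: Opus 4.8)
The plan is to identify the Letac--Massam power function $H(\alpha,\beta;\cdot)$ with one of the $M$-power functions $\delta_{\us}^{(M)}$ of Definition \ref{M-powers}, and then to read the statement off Theorem \ref{laplacedelta}. First I would invoke Proposition \ref{A_P}: since $(\alpha,\beta)\in\mathcal{A}_0$, there is an $M\in\{2,\ldots,n-1\}$ for which the relations \eqref{A1} and the inequalities \eqref{A2} hold; fix such an $M$ and set
$$
s_i=\alpha_i\ \ (1\le i\le M-1),\qquad s_M=\alpha_{M-1}+\alpha_M-\beta_M,\qquad s_i=\alpha_{i-1}\ \ (M+1\le i\le n).
$$

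The crucial elementary claim is then $H(\alpha,\beta;x)=\delta_{\us}^{(M)}(x)$ for all $x\in Q_{A_n}$. I would prove it by matching exponents in \eqref{Hformula} and \eqref{delta(M)}: the exponent of $|x_{\{i,i+1\}}|$ is $\alpha_i$ on the $H$-side and, on the $\delta_{\us}^{(M)}$-side, $s_i$ for $i\le M-1$ and $s_{i+1}$ for $i\ge M$ --- which agree by the definition of $\us$; the exponent of $x_{ii}$ for $2\le i\le n-1$ is $\beta_i$ on the $H$-side and, on the $\delta_{\us}^{(M)}$-side, $s_{i-1}$ for $i\le M-1$, $s_{M-1}-s_M+s_{M+1}$ for $i=M$, and $s_{i+1}$ for $i\ge M+1$ --- which equal $\beta_i$ by \eqref{A1} and by the definition of $s_M$. (In fact \eqref{A1} is exactly the set of constraints under which $H(\alpha,\beta;\cdot)$ is of the form $\delta_{\us}^{(M)}$.) Under the same dictionary, the inequalities \eqref{A2} become $s_i>\tfrac12$ for all $i\neq M$ and $s_M>0$, i.e. precisely the convergence conditions of Theorem \ref{laplacedelta}.

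Now I would combine the pieces. Recall from \eqref{mu} and the remark following it that $\mu_{A_n}(x)(dx)=\varphi_{Q_{A_n}}(x)1_{Q_{A_n}}(x)\,dx$, and that by Corollary \ref{canonical} one has $\varphi_{Q_{A_n}}=c\,\varphi_{A_n}$ with $c>0$. Hence, for every $y\in P_{A_n}$,
$$
\begin{aligned}
\int_{Q_{A_n}}e^{-\tr(xy)}H(\alpha,\beta;x)\mu_{A_n}(x)(dx)
&=c\int_{Q_{A_n}}e^{-\tr(xy)}\delta_{\us}^{(M)}(x)\varphi_{A_n}(x)\,dx\\
&=c\,\pi^{(n-1)/2}\Bigl(\prod_{i\neq M}\Gamma(s_i-\tfrac12)\Bigr)\Gamma(s_M)\,\Delta_{-\us}^{(M)}(y),
\end{aligned}
$$
where the second equality is Theorem \ref{laplacedelta}. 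By Theorem \ref{delta-Delta} (together with Corollary \ref{Cor_delta}), $\Delta_{-\us}^{(M)}(y)=\delta_{\us}^{(M)}(\pi(y^{-1}))$, and this equals $H(\alpha,\beta;\pi(y^{-1}))$ by the claim applied at the point $\pi(y^{-1})\in Q_{A_n}$. Putting ${\Gamma_1}_{(\alpha,\beta)}:=c\,\pi^{(n-1)/2}\bigl(\prod_{i\neq M}\Gamma(s_i-\tfrac12)\bigr)\Gamma(s_M)$ gives the stated formula.

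There is no genuine obstacle here; the only point requiring care is the bookkeeping in the identification $H(\alpha,\beta;\cdot)=\delta_{\us}^{(M)}$, namely the index shift between the clique/separator labelling of $(\alpha,\beta)$ and the vertex labelling of $\us$, and checking that the $M$ provided by Proposition \ref{A_P} makes \eqref{A1}, \eqref{A2} and the convergence conditions of Theorem \ref{laplacedelta} simultaneously compatible. Once that identity is in place, the theorem is an immediate consequence of Theorems \ref{laplacedelta} and \ref{delta-Delta}.
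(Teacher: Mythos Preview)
Your proof is correct and follows essentially the same route as the paper: identify $H(\alpha,\beta;\cdot)$ with $\delta_{\us}^{(M)}$ via Proposition~\ref{A_P} and Proposition~\ref{comp-H-delta}, apply Theorem~\ref{laplacedelta}, and use Theorem~\ref{delta-Delta} to rewrite $\Delta_{-\us}^{(M)}(y)$ as $H(\alpha,\beta;\pi(y^{-1}))$. One minor simplification: the detour through Corollary~\ref{canonical} and the constant $c$ is unnecessary, since $H_{A_n}(-\tfrac{3}{2}\mathbbm{1},-\mathbbm{1};x)$ in \eqref{mu} coincides exactly with $\varphi_{A_n}(x)$ from \eqref{char}, so $\mu_{A_n}(x)(dx)=\varphi_{A_n}(x)1_{Q_{A_n}}(x)\,dx$ on the nose and $c=1$.
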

The methods of our article give a new simple proof of Theorem \ref{thm3.3}, see the proof
of Corollary \ref{comparison} below.\\

Let us  compare now the functions  $H(\alpha,\beta;x)$ and $H(\alpha,\beta; {\pi(y^{-1})})$
with the generalized power functions $\delta_{\us}^{(M)}$ and $\Delta_{\us}^{(M)}$.
 \begin{pro}\label{comp-H-delta}
\begin{enumerate}
\item Let $\alpha\in \R^{n-1}$ and $\beta\in \R^{n-2}$. There exists $\us\in\R^n$ such that
 $H(\alpha,\beta; x)=\delta_{\us}^{(M)}(x)$
  if and only if 
   \eqref{A1} holds for some $2\le M\le  n-1$.
  
    Then  $s_j= \alpha_{j}\ {\it if}\ 1\le j\le M-1,\ 
   s_M=\alpha_{M-1} + \alpha_{M} -\beta_M$ and
$ s_j=\alpha_{j-1}\ {\it if}\ M+1\le j\le n.$
\item Moreover, under the hypothesis of {Part 1},  we have $H(\alpha,\beta;\pi(y^{-1}))=\Delta_{-\us}^{(M)}(y)$. 
\end{enumerate} 
\end{pro}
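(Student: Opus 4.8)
The plan is to prove Part 1 by turning the functional identity $H(\alpha,\beta;x)=\delta_{\us}^{(M)}(x)$ on $Q_G$ into a linear system for the exponents, and then to obtain Part 2 for free from Part 1 together with Theorem \ref{delta-Delta}.

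For Part 1, I would first note that both sides are monomials in the same ``building-block'' functions on $Q_{A_n}$, namely $b_j(x):=|x_{\{j:j+1\}}|$ for $1\le j\le n-1$ and $d_i(x):=x_{ii}$ for $2\le i\le n-1$: for $H$ this is exactly \eqref{Hformula}, and for $\delta_{\us}^{(M)}$ it follows from \eqref{delta(M)} after reindexing the second numerator product via $i\mapsto i-1$, which turns $\prod_{i=M+1}^{n}|x_{\{i-1:i\}}|^{s_i}$ into $\prod_{j=M}^{n-1}b_j^{\,s_{j+1}}$. The key step is then to observe that these $2n-3$ functions are \emph{multiplicatively independent} on $Q_{A_n}$; one verifies this by restricting to diagonal matrices $x=\mathrm{diag}(t_1,\dots,t_n)\in Q_{A_n}$ and, separately, by perturbing a single off-diagonal entry $x_{j,j+1}$ away from a diagonal point. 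Consequently the identity $H(\alpha,\beta;\cdot)=\delta_{\us}^{(M)}$ holds iff the exponent of each $b_j$ and each $d_i$ agrees on the two sides, i.e.\ iff
\begin{align*}
\alpha_j&=s_j\ \ (1\le j\le M-1),\qquad \alpha_j=s_{j+1}\ \ (M\le j\le n-1),\\
\beta_i&=s_{i-1}\ \ (2\le i\le M-1),\qquad \beta_M=s_{M-1}-s_M+s_{M+1},\\
\beta_i&=s_{i+1}\ \ (M+1\le i\le n-1).
\end{align*}
I would then solve this system: the first line forces $s_j=\alpha_j$ for $j\le M-1$ and $s_j=\alpha_{j-1}$ for $j\ge M+1$ and leaves $s_M$ undetermined; plugging these into the remaining lines, the equations with $i\ne M$ become precisely the constraints \eqref{A1}, while the single equation for $i=M$ reads $\beta_M=\alpha_{M-1}-s_M+\alpha_M$, pinning down $s_M=\alpha_{M-1}+\alpha_M-\beta_M$. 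This yields both implications at once — for the converse, when \eqref{A1} holds the $\us$ defined by these formulas satisfies every equation of the system, hence $H(\alpha,\beta;\cdot)=\delta_{\us}^{(M)}$ — and it produces exactly the stated formulas for $\us$.

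For Part 2, assuming \eqref{A1} and letting $\us$ be as in Part 1, I would invoke Corollary \ref{Cor_delta} (so that $\delta_{\us}^{(M)}=\delta_{\us}^{\prec}$ for an eliminating order $\prec$ with maximal element $M$) together with Theorem \ref{delta-Delta}, which give $\delta_{\us}^{(M)}(\pi(y^{-1}))=\Delta_{-\us}^{(M)}(y)$ for all $y\in P_G$. Evaluating the identity of Part 1 at the point $\pi(y^{-1})\in Q_G$ then gives $H(\alpha,\beta;\pi(y^{-1}))=\delta_{\us}^{(M)}(\pi(y^{-1}))=\Delta_{-\us}^{(M)}(y)$, as claimed.

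The one genuinely delicate point is the multiplicative independence of the building-block functions, which is what licenses the passage from the functional identity to the linear system; once that is available, everything else is index bookkeeping, the only subtle spot being the index $i=M$, where the exponent of $x_{MM}$ has the special form $-(s_{M-1}-s_M+s_{M+1})$ and it is this equation alone that fixes $s_M$. Part 2, by contrast, should be a one-line consequence of Part 1.
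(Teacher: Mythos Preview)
Your argument is correct and follows the same route as the paper: the paper's proof simply says the equality is ``easily verified by confronting their definitions \eqref{Hformula} and \eqref{delta(M)}'' and that Part 2 ``follows from Theorem \ref{delta-Delta}'', which is exactly your exponent-matching plus the invocation of $\delta_{\us}^{(M)}(\pi(y^{-1}))=\Delta_{-\us}^{(M)}(y)$. You have merely made explicit the one point the paper glosses over, namely the multiplicative independence of the $|x_{\{j:j+1\}}|$ and $x_{ii}$ on $Q_{A_n}$ that justifies comparing exponents term by term.
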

 \begin{proof}
The equality of $H(\alpha,\beta; x)$ and $ \delta_{\us}^{(M)}(x)$ is easily verified  by confronting their definitions  \eqref{Hformula} and \eqref{delta(M)}.  
Part 2 follows from Theorem \ref{delta-Delta}.
 \end{proof}
 
 \begin{cor}\label{comparison}
 The type {I} Wishart distributions indexed by the set ${\mathcal A}_0$ are equal   to  the  subset $\bigcup_{M=2}^{n-1} (\gamma_{\us,y}^{(M)})_{y\in P_G}$ of Wishart NEF families defined in Section \ref{RieszWishartQ}.  
Thus  they are strictly contained
  in the set of all   Wishart NEF families on $Q_G$, equal to  $\bigcup_{M=1}^{n} (\gamma_{\us,y}^{(M)})_{y\in P_G}$. 
 \end{cor}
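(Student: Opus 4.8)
The plan is to realise the type I Wishart distributions of \cite{L-M} as natural exponential families generated by the measures $H(\alpha,\beta;x)\mu_{A_n}(dx)$, to identify these generating measures with our Riesz measures $R_{\us}^{(M)}$ for $2\le M\le n-1$ by means of Propositions \ref{A_P} and \ref{comp-H-delta}, and finally to exhibit a family with $M=1$ that is not obtained in this way.

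First I would recall that, for $(\alpha,\beta)\in\mathcal{A}_0$, the type I Wishart distribution is the NEF on $Q_{A_n}$ generated by $H(\alpha,\beta;x)\mu_{A_n}(dx)$, whose Laplace transform is a multiple of $H(\alpha,\beta;\pi(y^{-1}))$ by Theorem \ref{thm3.3}. Since two $\sigma$-finite measures generate the same NEF precisely when they differ by a positive constant times an exponential $e^{\langle\theta_0,\cdot\rangle}$, it suffices to compare generating measures. By \eqref{mu} together with Corollary \ref{canonical}, one has $\mu_{A_n}(dx)=\mathrm{const}\cdot\varphi_{A_n}(x)\,1_{Q_{A_n}}(x)\,dx$, the same factor that occurs in $dR_{\us}^{(M)}$.

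For the equality of the two sets I would prove both inclusions. Given $(\alpha,\beta)\in\mathcal{A}_0$, Proposition \ref{A_P} produces $M\in\{2,\ldots,n-1\}$ for which \eqref{A1}--\eqref{A2} hold, and then Proposition \ref{comp-H-delta}(1) gives $\us\in\R^n$ with $H(\alpha,\beta;x)=\delta_{\us}^{(M)}(x)$; inspecting the explicit form of $\us$ shows that \eqref{A2} is exactly the admissibility condition $s_i>\tfrac12$ ($i\ne M$), $s_M>0$, so $H(\alpha,\beta;x)\mu_{A_n}(dx)=\mathrm{const}\cdot dR_{\us}^{(M)}(x)$ and the type I family with parameter $(\alpha,\beta)$ coincides with $\gamma_{\us,y}^{(M)}$. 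Conversely, for $M\in\{2,\ldots,n-1\}$ and admissible $\us$, I would define $(\alpha,\beta)$ by inverting those relations ($\alpha_j=s_j$ for $j\le M-1$, $\alpha_j=s_{j+1}$ for $M\le j\le n-1$, $\beta_j=s_{j-1}$ for $2\le j\le M-1$, $\beta_j=s_{j+1}$ for $M+1\le j\le n-1$, $\beta_M=s_{M-1}-s_M+s_{M+1}$), check that \eqref{A1} holds and that \eqref{A2} follows from the admissibility of $\us$, hence $(\alpha,\beta)\in\mathcal{A}_0$ and again $H(\alpha,\beta;x)=\delta_{\us}^{(M)}(x)$. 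This gives $\bigcup_{(\alpha,\beta)\in\mathcal{A}_0}\{\text{type I Wishart}\}=\bigcup_{M=2}^{n-1}(\gamma_{\us,y}^{(M)})_{y\in P_G}$.

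Finally, for the proper inclusion in $\bigcup_{M=1}^{n}(\gamma_{\us,y}^{(M)})_{y\in P_G}$: for $n\ge 3$ (the case $n\le 2$ is trivial, the left-hand set being empty) take $M=1$ and admissible $\us$ with $s_1\ne s_2$. By \eqref{delta(M)} the function $\delta_{\us}^{(1)}$ carries the factor $\eta_{11}^{\,s_1-s_2}$, whereas for $2\le M'\le n-1$ the coordinate $\eta_{11}$ enters $\delta_{\us'}^{(M')}$ only inside $|\eta_{\{1,2\}}|=\eta_{11}\eta_{22}-\eta_{12}^2$. Since $R_{\us}^{(1)}$ and $R_{\us'}^{(M')}$ share the common factor $\varphi_{A_n}\,1_{Q_{A_n}}$, they can be proportional up to an exponential tilt only if $\delta_{\us}^{(1)}$ and $\delta_{\us'}^{(M')}$ are proportional as functions (a homogeneous quotient equal to $c\,e^{\langle\theta_0,\cdot\rangle}$ forces $\theta_0=0$), and the $\eta_{11}$-dependence makes this impossible; hence $\gamma_{\us,y}^{(1)}$ is not a type I Wishart. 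I expect the main obstacle to be precisely this last point — ruling out that a change of natural parameter turns a boundary case $M\in\{1,n\}$ into an interior one $2\le M\le n-1$ — whereas the equality itself is a bookkeeping translation between the $(\alpha,\beta)$ and $(M,\us)$ parametrizations supplied by Propositions \ref{A_P} and \ref{comp-H-delta}.
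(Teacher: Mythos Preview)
Your proposal is correct and follows essentially the same route as the paper: the equality is obtained by the parameter translation of Propositions \ref{A_P} and \ref{comp-H-delta} (the paper cites Proposition \ref{comp-H-delta} together with Theorem \ref{laplacedelta}), and the strict inclusion is argued via the extra diagonal factor present in $\delta_{\us}^{(1)}$ (or $\delta_{\us}^{(n)}$) but absent from any $H(\alpha,\beta;\cdot)$. The paper phrases the latter as a pure counting argument---$\delta_{\us}^{(1)}$ and $\delta_{\us}^{(n)}$ carry powers of $n-1$ diagonal entries whereas $H$ carries only $n-2$---without addressing whether an exponential tilt of the generating measure could repair this; your homogeneity remark (a homogeneous quotient equal to $c\,e^{\langle\theta_0,\cdot\rangle}$ forces $\theta_0=0$) closes that small gap and is a worthwhile addition.
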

 \begin{proof}
 It is a direct application of Proposition \ref{comp-H-delta} and Theorem \ref{laplacedelta}.
 Note that  Theorem \ref{laplacedelta} implies  Theorem {\ref{thm3.3}} of  \cite{L-M}.
 
  The family of  functions $H(\alpha,\beta,x)$ {does not contain} the power functions $\delta_{\us}^{(1)}$ or $\delta_{\us}^{(n)}$.
 In fact,  the last functions
  contain   powers of $n-1$ {diagonal} elements $x_{ii}$, whereas
  the function $H(\alpha,\beta,x)$ contains powers of
  $n-2$ such elements.
 \end{proof}
 
   Similar comparisons can be done on the cones $P_G$. In this case, \cite{L-M}
   define  type {II} Wishart distributions on $P_G$ indexed by a set ${\mathcal B}_0$, analogous
   to the set ${\mathcal A}_0$ for $Q_G$. Similar
    arguments as on the cone $Q_G$ lead to
 \begin{cor}\label{comparisonP}
 The type {II} Wishart distributions on $P_G$ indexed by the set ${\mathcal B}_0$
  are equal   to  the  subset $\bigcup_{M=2}^{n-1} (\tilde\gamma_{\us,x}^{(M)})_{x\in Q_G}$ of Wishart NEF families defined in Section \ref{WishartP}.  
Thus  they are strictly contained
  in the set of all   Wishart NEF families on $P_G$, equal to  $\bigcup_{M=1}^{n} (\tilde\gamma_{\us,x}^{(M)})_{x\in Q_G}$. 
 \end{cor}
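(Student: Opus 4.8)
The plan is to transcribe the proof of Corollary~\ref{comparison} from $Q_G$ to $P_G$, interchanging the roles of $\delta$ and $\Delta$. The first step is to establish the $P_G$-analogue of Proposition~\ref{comp-H-delta}: writing $\widetilde H(\alpha,\beta;\cdot)$ for the Letac--Massam generalized power function on $P_G$ that enters the definition of the type~II family (a product of powers of principal minors of $y\in P_{A_n}$ read off from the clique/separator data as in \cite{L-M}), one shows that there exists $\us\in\R^n$ with $\widetilde H(\alpha,\beta;y)=\Delta^{(M)}_{\us}(y)$ precisely when $(\alpha,\beta)\in\mathcal B_0$, in which case $M$ is the index of the first separator and $\us$ is recovered from $(\alpha,\beta)$ by an affine formula, up to the shifts dictated by the Gamma factors of Theorem~\ref{laplaceDelta}. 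Concretely this means first proving the $P_G$-version of Proposition~\ref{A_P} — that $\mathcal B_0=\bigcup_P B_P$ is a union over perfect orders of cliques of sets depending only on the first separator $\{M\}$, with $M$ running through $\{2,\dots,n-1\}$ — which follows from Propositions~\ref{order} and~\ref{POC} exactly as Proposition~\ref{A_P} did, and then matching exponents against \eqref{P(M)}.

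Granting this dictionary, the identification of the two families is immediate. By Theorem~\ref{laplaceDelta}, the Laplace transform on $P_{A_n}$ of $\Delta^{(M)}_{\us}(y)\,dy$ is a product of Gamma values times $\delta^{(M)}_{-\us}(\eta)\varphi_{A_n}(\eta)$; under the dictionary above this is exactly the Letac--Massam normalising constant times their type~II power function evaluated at $\eta=\pi(y^{-1})$. Hence $\tilde R_{\us}^{(M)}$ and the type~II reference measure coincide up to a positive constant, so the natural exponential families they generate are the same, and the two families are indexed in the same way by $\us$ (equivalently $(\alpha,\beta)$) with $M\in\{2,\dots,n-1\}$. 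As a by-product one reads off the closed form of the type~II Laplace transform directly from Theorem~\ref{laplaceDelta}, giving a short proof of the type~II analogue of Theorem~\ref{thm3.3}, just as Theorem~\ref{laplacedelta} did in the type~I case.

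For the strictness of the inclusion I would argue exactly as in Corollary~\ref{comparison}. The type~II power functions available to \cite{L-M} involve the separators $\{2\},\dots,\{n-1\}$, hence powers of only $n-2$ ``small'' blocks; but $\Delta^{(M)}_{\us}$ with $M=1$ or $M=n$ — equivalently its conjugate $\delta^{(M)}_{-\us}$, which carries the same information through the Laplace transform, see \eqref{delta(M)} and the observation following Definition~\ref{M-powers} — involves $n-1$ of them. So the extreme values $M=1,n$ are never realized by type~II, and consequently $\bigcup_{M=2}^{n-1}(\tilde\gamma_{\us,x}^{(M)})_{x\in Q_G}$ is a proper subset of $\bigcup_{M=1}^{n}(\tilde\gamma_{\us,x}^{(M)})_{x\in Q_G}$, the latter being by definition the set of all Wishart NEF on $P_G$.

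The main obstacle is not analytic but organisational: one must pin down the exact form of the Letac--Massam type~II family on $P_G$ in their clique/separator and concentration-matrix language, and verify both the affine change of parameters and that their admissible set $\mathcal B_0$ corresponds to $\{(\us,M):2\le M\le n-1,\ s_i>-\tfrac{3}{2}\ (i\neq M),\ s_M>-1\}$. Once the $P_G$-counterparts of Propositions~\ref{A_P} and~\ref{comp-H-delta} are in place, the rest of the argument is a direct transcription of Section~\ref{LM}, and no new ideas beyond those of Corollary~\ref{comparison} are needed.
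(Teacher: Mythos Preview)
Your proposal is correct and follows precisely the approach the paper itself indicates: the text preceding Corollary~\ref{comparisonP} states only that ``similar arguments as on the cone $Q_G$ lead to'' the result, and your plan is exactly a careful transcription of the $Q_G$ argument (Proposition~\ref{A_P}, Proposition~\ref{comp-H-delta}, Theorem~\ref{laplacedelta}/\ref{laplaceDelta}, and the strictness reasoning of Corollary~\ref{comparison}) to the $P_G$ side. The organisational caveat you flag---checking the affine correspondence between $(\alpha,\beta)\in\mathcal{B}_0$ and $(\us,M)$ with $2\le M\le n-1$---is the only real work, and the paper leaves it implicit as well.
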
  

  \section{Appendix}
We list here some properties of triangular matrices, used in proofs.

\begin{lem}\label{a:triang}
\begin{enumerate}
\item Let $ A= K^0$, where $K=A_{\{1:k\}}$
and let $L$ be lower triangular and $U$ upper triangular  $n\times n$ matrices.
Then
$UAL=\left(
U_{\{1:k\}}KL_{\{1:k\}}\right)^0.
$
 \item Let $M,L,U$ be  matrices $n\times n$, with $L$ lower triangular and $U$ upper triangular. Then, for all $i=1,\ldots, n$,
$(LMU)_{\{1:i\}}= L_{\{1:i\}}M_{\{1:i\}} U_{\{1:i\}}$ and $(UML)_{\{i:n\}}=U_{\{i:n\}}M_{\{i:n\}}L_{\{i:n\}}$.

\item If $T$ is an  invertible triangular matrix then
 $(T_{\{1:k\}})^{-1}=(T^{-1})_{\{1:k\}}$ for all $k=1,\ldots, n$.
 \end{enumerate}
 
\end{lem}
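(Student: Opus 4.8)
The plan is to prove all three parts by elementary block matrix multiplication. In each case I partition the index set $V$ into $\{1:k\}$ and $\{k+1:n\}$ (for Part~1, and for the first identity of Part~2 with $k=i$), or into $\{1:i-1\}$ and $\{i:n\}$ (for the mirror identity of Part~2), and write every matrix in the corresponding $2\times 2$ block form. The one fact driving everything is that with respect to any such partition a lower triangular matrix has zero upper-right block while an upper triangular matrix has zero lower-left block.

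For Part~1, write $U=\left(\begin{smallmatrix} U_{\{1:k\}} & * \\ 0 & * \end{smallmatrix}\right)$ and $L=\left(\begin{smallmatrix} L_{\{1:k\}} & 0 \\ * & * \end{smallmatrix}\right)$, and observe that $A=K^{0}$ has its top-left $k\times k$ block equal to $K$ and all other entries zero. Then $UA$ has top-left block $U_{\{1:k\}}K$ and all other blocks zero, so $UAL$ has top-left block $U_{\{1:k\}}KL_{\{1:k\}}$ and all other blocks zero; this is exactly $\bigl(U_{\{1:k\}}KL_{\{1:k\}}\bigr)^{0}$.

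For Part~2, take the partition at $k=i$ and expand $(LMU)_{\{1:i\}}$ block by block: the zero upper-right block of $L$ kills every term involving $M_{\{i+1:n\},\{1:i\}}$ and the zero lower-left block of $U$ kills every term involving $M_{\{1:i\},\{i+1:n\}}$, leaving $(LMU)_{\{1:i\}}=L_{\{1:i\}}M_{\{1:i\}}U_{\{1:i\}}$. The mirror identity is proved the same way with the partition $\{1:i-1\}\cup\{i:n\}$, reading off the bottom-right block of $UML$. For Part~3 I would deduce the statement from Part~2: if $T$ is lower triangular and invertible then $T^{-1}$ is lower triangular as well, so applying Part~2 with $L=T$, $M=T^{-1}$ and $U=I$ gives $I_{\{1:k\}}=(TT^{-1}I)_{\{1:k\}}=T_{\{1:k\}}\,(T^{-1})_{\{1:k\}}$, whence $(T^{-1})_{\{1:k\}}=(T_{\{1:k\}})^{-1}$; the upper triangular case follows by transposing, since $T\mapsto\transp{T}$ interchanges lower and upper triangularity, commutes with inversion, and satisfies $(\transp{T})_{\{1:k\}}=\transp{(T_{\{1:k\}})}$.

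There is no genuine obstacle here; the only thing that requires attention is bookkeeping — choosing the block partition consistently (in particular the index shift to $\{1:i-1\}\cup\{i:n\}$ in the mirrored statements) and keeping track of which off-diagonal block of each triangular factor vanishes. All the underlying manipulations are routine $2\times 2$ block products.
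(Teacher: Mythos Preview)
Your argument is correct. For Parts~1 and~2 you do exactly what the paper does: it states only that these are ``easy to prove, by block multiplication of matrices,'' and your $2\times 2$ block computations are precisely that. For Part~3 there is a small methodological difference: the paper suggests proving it directly via the cofactor formula for the inverse, whereas you deduce it from Part~2 by setting $L=T$, $M=T^{-1}$, $U=I$ (and then transposing for the upper-triangular case). Your route is arguably cleaner, since it recycles Part~2 instead of introducing a separate computation, and it does not even need the auxiliary remark that $T^{-1}$ is triangular (Part~2 places no hypothesis on the middle factor $M$). The cofactor approach, on the other hand, is self-contained and would work even if Part~2 were not available. Either way the content is elementary and the result is the same.
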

 All these properties are elementary 
and easy to prove,  by block multiplication of matrices (1,2) or by inverse matrix formula with cofactors (3).





\end{document}